\newtheorem{theorem}{Theorem}
\newtheorem{corollary}{Corollary}
\newtheorem{lemma}{Lemma}
\newtheorem{proposition}{Proposition}
\newtheorem{remark}{Remark}
\numberwithin{equation}{section}
\DeclareMathOperator{\diag}{diag}
\newcommand{\rev}[1]{\textcolor{black}{ #1}}
\newcommand{\secrev}[1]{\textcolor{black}{ #1}}
\newcommand{\calC}{\ensuremath{\mathcal{C}}}
\newcommand{\calH}{\ensuremath{\mathcal{H}}}
\newcommand{\calL}{\ensuremath{\mathcal{L}}}
\newcommand{\calN}{\ensuremath{\mathcal{N}}}
\newcommand{\norm}[1]{\left\|{#1}\right\|}
\newcommand{\abs}[1]{\left|{#1}\right|}
\newcommand{\set}[1]{\left\{{#1}\right\}}
\newcommand{\dotprod}[2]{\langle#1,#2\rangle}
\newcommand{\expn}[1]{e^{{-{#1}\pi^2\sigma^2}}}
\newcommand{\expp}[1]{e^{{{#1}\pi^2\sigma^2}}}
\newcommand{\est}[1]{\widehat{#1}}
\newcommand{\expec}{\ensuremath{\mathbb{E}}}
\newcommand{\matR}{\ensuremath{\mathbb{R}}}
\newcommand{\matZ}{\ensuremath{\mathbb{Z}}}
\newcommand{\prob}{\ensuremath{\mathbb{P}}}
\newcommand{\real}{\text{Re}}
\newcommand{\imag}{\text{Im}}
\newcommand{\gest}{\ensuremath{\est{g}}}
\newcommand{\gbar}{\ensuremath{\overline{g}}}
\newcommand{\gbarest}{\ensuremath{\bar{\est{g}}}}
\newcommand{\zbar}{\ensuremath{\overline{z}}}
\newcommand{\smooth}{\ensuremath{B}}
\newcommand{\reg}{\ensuremath{\gamma}}
\newcommand{\lambtil}{\ensuremath{\widetilde{\lambda}}}
\newcommand{\lambbar}{\ensuremath{\overline{\lambda}}}
\newcommand{\lambmin}{\ensuremath{\lambda_{\min}}}
 \newcommand{\vecxi}{\mathbf \xi}  
\newcommand{\mproj}[1]{\frac{#1}{\abs{#1}}}
\title{Error analysis for denoising smooth modulo signals on a graph}
\author{Hemant Tyagi\thanks{Inria, Univ. Lille, CNRS, UMR 8524 - Laboratoire Paul Painlev\'{e}, F-59000} \\
\texttt{hemant.tyagi@inria.fr}}
\begin{document}
\maketitle

\begin{abstract}
In many applications, we are given access to noisy \emph{modulo} samples of a smooth function with the goal being to robustly unwrap the samples, i.e., to estimate the original samples of the function. In a recent work, Cucuringu and Tyagi \cite{CMT18_long} proposed denoising the modulo samples by first representing them on the unit complex circle and then solving a smoothness regularized least squares problem -- the smoothness measured w.r.t the Laplacian of a suitable proximity graph $G$ -- on the product manifold of unit circles. This problem is a quadratically constrained quadratic program (QCQP) which is nonconvex, hence they proposed solving its \emph{sphere-relaxation} leading to a trust region subproblem (TRS). In terms of theoretical guarantees, $\ell_2$ error bounds were derived for (TRS). These bounds are however weak in general and do not really demonstrate the denoising performed by (TRS). 

In this work, we analyse the (TRS) as well as an unconstrained relaxation of (QCQP). For both these estimators we provide a refined analysis in the setting of Gaussian noise and derive noise regimes where they provably denoise the modulo observations w.r.t the $\ell_2$ norm. The analysis is performed in a general setting where $G$ is any connected graph.
\end{abstract}

% Introduction
%------------------
% Introduction
%
\section{Introduction} \label{sec:intro}
Many modern applications involve the acquisition of noisy \emph{modulo} samples of a function $f: \matR^d \rightarrow \matR$, i.e., we obtain 
\begin{equation} \label{eq:noisy_mod}
y_i = (f(x_i) + \eta_i)\bmod \zeta; \quad i=1,\dots,n 
\end{equation}
for some $\zeta \in \matR^+$, where $\eta_i$ denotes noise. Here, $a\bmod \zeta$ lies in the interval $[0,\zeta)$ and is such that $a = q \zeta + (a\bmod \zeta)$ for an integer $q$. This situation arises, for instance, in self-reset analog to digital converters (ADCs) which handle voltage surges by simply storing the modulo value of the voltage signal \cite{Kester,RHEE03,yamaguchi16}. In other words, if the voltage signal exceeds the range $[0,\zeta]$, then its value is simply reset via the modulo operation. Another important application is phase unwrapping where one typically obtains noisy modulo $2\pi$ samples. There, one usually seeks to infer the structure of an object by transmitting waveforms, and measuring the difference in phase (in radians) between the transmitted and scattered waveforms. This is common in synthetic radar aperture interferometry (InSAR) where one aims to learn the elevation map of a terrain (see for e.g. \cite{graham_insar,zebker86}), and also arises in many other domains such as MRI \cite{hedley92,laut73} and diffraction tomography \cite{pratt88}, to name a few. 

Let us assume $\zeta = 1$ from now. Given \eqref{eq:noisy_mod}, one can ask whether we can recover the original samples of $f$, i.e., $f(x_i)$? Clearly, this is only possible up to a global integer shift. Furthermore, answering this question requires making additional assumptions about $f$, for instance, we may assume $f$ to be smooth (for e.g., Lipschitz, continuously differentiable etc.). In this setting, when $d=1$, it is not difficult to see that we can \emph{exactly} recover the samples of $f$ when there is no noise (i.e., $\eta_i = 0$ in \eqref{eq:noisy_mod}) provided the sampling is fine enough. This is achieved by sequentially traversing the samples and reconstructing the quotient $q_i \in \matZ$ corresponding to $f(x_i)$ by setting it to $q_{i-1} + a_{i-1,i}$ where $a_{i-1,i}$ equals either (a) $0$ (if $y_i$ is ``close enough'' to $y_{i-1}$), or (b) $\pm 1$ (if $y_i$ is sufficiently smaller/larger than $y_{i-1}$). If $f$ is smooth, then a fine enough sampling density will ensure that nearby samples of $f$ have quotients which differ by either $1,-1$ or $0$. This argument can be extended to the general multivariate setting as well (see \cite{fanuel20}). 

While exact recovery of $f(x_i)$ is of course no longer possible in the presence of noise,  one can show \cite{fanuel20} that if $\eta_i\bmod 1$ is not too large (uniformly for all $i$), and if $n$ is sufficiently large, then the estimates $\tilde{f}(x_i)$ returned by the above procedure are such that (up to a global integer shift) for each $i$, $\abs{\tilde{f}(x_i) - f(x_i)}$ is bounded by a term proportional to the noise level. This suggests a natural two stage procedure for estimating $f(x_i)$ -- first obtain denoised estimates of $f(x_i) \mod 1$, and in the second stage, ``unwrap'' these denoised modulo samples to obtain the estimates $\tilde{f}(x_i)$. 

This was the motivation behind the recent work of Cucuringu and Tyagi \cite{cmt_aistats18,CMT18_long} that focused primarily on the first (modulo denoising) stage, which is an interesting question by itself. Before discussing their result, it will be convenient to fix the notation used throughout the paper.
\paragraph{Notation.} Vectors and matrices are denoted by lower and upper case symbols respectively, while sets are denoted by calligraphic symbols (e.g., $\calN$), 
with the exception of $[n] = \set{1,\dots,n}$ for $n \in \mathbb{N}$. The imaginary unit is denoted by $\iota = \sqrt{-1}$. For $a,b \geq 0$, 
we write $a \lesssim b$ when there exists an absolute constant $C > 0$ such that $a \leq C b$. If $a \lesssim b$ and $a \gtrsim b$, then we write $a \asymp b$. For $u \in \mathbb{C}^n$, we write $u = \real(u) + \iota \imag(u)$ where $\real(u), \imag(u) \in \matR^n$ denote its real and imaginary parts respectively. The Hermitian conjugate of $u$ is denoted by $u^*$, and  $\norm{u}_p$ denotes the usual $\ell_p$ norm in $\mathbb{C}^n$ for $1 \leq p \leq \infty$. 
%-----------------------------------
% Denoising smooth modulo signals
%----------------------------------- 
\subsection{Denoising smooth modulo signals} \label{subsec:den_smooth_mod_intro}
Cucuringu and Tyagi \cite{CMT18_long, cmt_aistats18} considered the problem of denoising modulo samples by formulating it as an optimization problem on a manifold. Assume $f:[0,1]^d \rightarrow \matR$, and suppose that $x_i$'s form a uniform grid in $[0,1]^d$. The main idea is to represent each sample $y_i$ via $z_i = \exp(\iota 2\pi y_i)$ and observing that if $x_i$ is close to $x_j$, then $\exp(\iota 2\pi f(x_i)) \approx \exp(\iota 2\pi f(x_j))$ holds provided $f$ is smooth. In other words, the samples $z = (z_1,\dots,z_n)$ lie on the manifold 
$$\calC_n := \set{u \in \mathbb{C}^n: \abs{u_i} = 1; \ i=1,\dots,n}$$
which is the product manifold of unit radius circles, 
i.e., $\calC_n = \calC_1 \times \cdots \times \calC_1.$ Hence, by constructing a proximity graph $G = ([n], E)$ on the sampling points $(x_i)_{i=1}^n$ -- where there is an edge between $i$ and $j$ if $x_i$ is within a specified distance of $x_j$ -- they proposed solving  a quadratically constrained quadratic program (QCQP)
\begin{equation} 
\min_{g \in \calC_n} \norm{g - z}_2^2 + \reg g^* L g \label{prog:qcqp} \tag{$\text{QCQP}$}
\end{equation}
where $L$ is the Laplacian of $G$, and $\reg > 0$ is a regularization parameter. While the objective function is convex, this is a non-convex problem due to the constraint set $\calC_n$. It is not clear whether the global solution of \eqref{prog:qcqp} can be obtained in polynomial time. Hence they proposed relaxing  $\calC_n$ to a sphere leading to the trust region subproblem (TRS) 
\begin{equation} 
\min_{g \in \mathbb{C}^n: \norm{g}_2^2 = n} \norm{g - z}_2^2 + \reg g^* L g  \equiv \min_{g \in \mathbb{C}^n: \norm{g}_2^2 = n} -2\real(g^* z) + \reg g^* L g .
\label{prog:trs} \tag{$\text{TRS}$}
\end{equation}

Trust region subproblems are well known to be solvable in polynomial time, with many efficient solvers (e.g., \cite{Hager01,Naka17}). 
In \cite{CMT18_long,cmt_aistats18}, \eqref{prog:trs} was shown to be quite robust to noise and scalable to large problem sizes through extensive experiments. On the theoretical front, a $\ell_2$ stability result was shown for the solution $\gest$ of (TRS). To describe this result, denote $h \in \calC_n$ to be the ground truth samples where $h_i = \exp(\iota 2\pi f(x_i))$, and define the entry-wise projection operator $\mathbb{C}^n \mapsto \calC_n$ \cite{Liu2017}
\begin{equation} \label{eq:project_manifold}
    \left( \mproj{g} \right)_i :=  \left\{
\begin{array}{rl}
\frac{g_i}{\abs{g_i}} \ ; & g_i \neq 0, \\
1 \ ; & \text{ otherwise, }
\end{array} \right. \quad i=1,\dots,n.
\end{equation}
Consider the univariate case with $f:[0,1] \rightarrow \matR$, and assume that the (Gaussian) noise $\eta_i \sim \calN(0,\sigma^2)$ i.i.d for $i=1,\dots,n$. 
Assuming $f$ is $M$-Lipschitz, and $\sigma \lesssim 1$, denote $\Delta$ to be the maximum degree of $G$ with $x_i$'s forming a uniform grid on $[0,1]$. Then provided $\reg \Delta \lesssim 1$, the bounds in \cite[Theorem 14]{CMT18_long} imply\footnote{The result in \cite{CMT18_long} bounds $\norm{\gest - h}_2$ but we can then use $\norm{\mproj{\gest} - h}_2 \leq 2 \norm{\gest - h}_2$, see \cite[Proposition 3.3]{Liu2017}. Also, while the statement in \cite[Theorem 14]{CMT18_long} has a more complicated form than that stated in \eqref{eq:cmt_err_bd}, one can verify that it is essentially of the same order as in \eqref{eq:cmt_err_bd}.} that with high probability, the solution $\gest$ of \eqref{prog:trs} satisfies
\begin{align} \label{eq:cmt_err_bd}
    \norm{\mproj{\gest} - h}_2^2 \lesssim \sigma n + \frac{\reg M^2 \Delta^3}{n}.
\end{align}
On the other hand, one can show that $\norm{z-h}_2^2 \asymp \sigma^2 n$ with high probability if $\sigma \lesssim 1$, hence we cannot conclude   
\begin{align} \label{eq:prov_den}
\rev{\norm{\frac{\gest}{\abs{\gest}} - h}_2} \ll \norm{z-h}_2.
\end{align}
This motivates the present paper which seeks to identify conditions under which \eqref{eq:prov_den} holds. In fact, we will consider a more abstract problem setting where $G$ is any connected graph, and $h \in \calC_n$ is smooth w.r.t $G$. This is formally described below, followed by a summary of our main results.
%
%
%-------------------
% Problem setup
%-------------------
\subsection{Problem setup} \label{subsec:prob_setup}
Consider an undirected, connected graph $G = ([n], E)$ where $E \subseteq \set{\set{i,j}: i \neq j \in [n]}$ denotes its set of edges. Denote the maximum degree of $G$ by $\triangle$, and the (combinatorial) Laplacian matrix associated with $G$ by $L \in \matR^{n \times n}$. We will denote the eigenvalues of $L$ by
\begin{equation*}
    \lambda_n = 0 < \lambda_{n-1} = \lambda_{\min} \leq \lambda_{n-2} \leq \cdots \leq \lambda_1
\end{equation*}
where $\lambda_{\min}$ denotes the Fiedler value of $G$, and is well known to be a measure of connectivity of $G$. The corresponding (unit $\ell_2$ norm) eigenvectors of $L$ will be denoted by $q_j \in \matR^n$; $j=1,\dots,n$ where we have that $q_n = \frac{1}{\sqrt{n}}(1,\dots,1)^T$. 

Let $h = [h_1,\dots,h_n]^T \in \calC_n$ be an unknown ground truth signal which is assumed to be smooth w.r.t $G$ in the sense that 
\begin{align} \label{eq:smooth_cond} 
 h^* L h = \sum_{\set{i,j} \in E} \abs{h_i - h_j}^2 \leq \smooth_{n}.
\end{align}
Here, $\smooth_n$ will typically be ``small'' with the subscript $n$ depicting possible dependence on $n$. We will assume that information about $h$ is available  in the form of noisy $z \in \calC_n$. Specifically, denoting $\eta_i \sim \calN(0,\sigma^2)$; $i=1,\dots,n$ to be i.i.d Gaussian random variables, we are given 
\begin{equation} \label{eq:noise_mod}
 z_i = h_i \exp(\iota 2\pi \eta_i); \quad i=1,\dots,n.    
\end{equation}
Given access to the noisy samples $z \in \calC_n$, and the graph $G$, we aim to answer the following two questions.
\begin{enumerate}
\item Consider the unconstrained quadratic program (UCQP) obtained by relaxing the constraints in \eqref{prog:qcqp} to $\mathbb{C}^n$
\begin{equation} 
\min_{g \in \mathbb{C}^n} \norm{g - z}_2^2 + \reg g^* L g. \label{prog:ucqp} \tag{$\text{UCQP}$}
\end{equation}
This is a convex program, also referred to as Tikhonov regularization in the literature (e.g., \cite{shuman13}), and its solution is given in closed form by $\gest = (I + \reg L)^{-1} z$. Under what conditions can we ensure that $\gest$ satisfies \eqref{eq:prov_den}?

\item Under what conditions can we ensure that the solution $\gest$ of \eqref{prog:trs} satisfies \eqref{eq:prov_den}?
\end{enumerate}
As we will see in Section \ref{sec:l2_TRS}, the solution of  \eqref{prog:trs} is closely related to that of \eqref{prog:ucqp} but requires a more careful analysis.
\begin{remark}
\secrev{It is worth mentioning that the quadratic penalty term $\reg g^* L g$ in \eqref{prog:ucqp} and \eqref{prog:trs} aims to promote solutions $\gest$ which are smooth w.r.t $G$. This is natural given that the ground truth signal $h \in \calC_n$ is assumed to be smooth w.r.t. $G$, as stated in \eqref{eq:smooth_cond}. Moreover, the choice of the regularization parameter $\gamma$ is important since larger values of $\gamma$ increase the smoothness of the estimate (larger bias), while smaller values increase the variance of the estimate. For e.g., if we set $\gamma = 0$, which is equivalent to removing the regularization term, then we obtain $\gest = z$. The main point of the ensuing analysis is to find a suitable ``intermediate'' choice of $\gamma$ which ensures that $\gest$ satisfies \eqref{eq:prov_den}.}
\end{remark}

\paragraph{Example: denoising modulo samples of a function.} We will also seek to instantiate the above results for the setup in \cite{CMT18_long} described earlier. More precisely, denoting $f: [0,1] \rightarrow \matR$ to be a $M$-Lipschitz function where $\abs{f(x) - f(y)} \leq M \abs{x-y}$ for each $x,y \in [0,1]$, let $x_i = \frac{i-1}{n-1}$ for $i=1,\dots,n$ be a uniform grid. For each $i$, we are given 
\begin{equation} \label{eq:mod1_noisy_samp_func}
    y_i = (f(x_i) + \eta_i) \mod 1; \quad \eta_i \sim \calN(0,\sigma^2), 
\end{equation}
where $\eta_i$ are i.i.d. Then denoting $z_i = \exp(\iota 2\pi y_i)$ and $h_i = \exp(\iota 2\pi f(x_i))$, we will consider $G = ([n], E)$ to be the \emph{path} graph ($P_n$), where
\begin{equation*}
    E = \set{\set{i,i+1}: i=1,\dots,n}.
\end{equation*}
%
%---------------
% Main results
%---------------
%
\subsection{Main results}
Before stating our results, let us define for any $\lambda \in [\lambda_{\min}, \lambda_1]$, the set
\begin{equation} \label{eq:low_freq_set}
    \calL_{\lambda} := \set{j \in [n-1]: \lambda_j < \lambda}
\end{equation}
consisting of indices corresponding to the ``low frequency'' part of the spectrum of $L$. Moreover, while all our results are non-asymptotic, we suppress constants in this section for clarity. \secrev{The reader is referred to Appendix \ref{appsec:summary_notation} for a tabular summary of the main notation used in the paper.}
%
%-----------------------
% Results for (UCQP)
%-----------------------
%
\paragraph{Results for \eqref{prog:ucqp}.} 
We begin by outlining our main results for the estimator \eqref{prog:ucqp}. Theorem \ref{thm:ucqp_denoise_expec_main} below identifies conditions on $\sigma$ under which \eqref{prog:ucqp} provably denoises the samples in expectation. The statement is a simplified version of Theorem \ref{thm:ucqp_denoise_expec}.
\begin{theorem} \label{thm:ucqp_denoise_expec_main}
For any given $\varepsilon \in (0,1)$ and $\lambbar \in [\lambmin, \lambda_1]$
satisfying $1 + \abs{\calL_{\lambbar}} \lesssim \varepsilon n$, suppose that $\frac{1}{\varepsilon \lambbar} \sqrt{\frac{{\triangle \smooth_n}}{n}} \lesssim \sigma \lesssim 1$. Then for the choice $\reg \asymp (\frac{\sigma^2 n}{\triangle \smooth_n \lambbar^2})^{1/4}$, the solution $\gest$ of \eqref{prog:ucqp} satisfies $\expec \norm{\frac{\gest}{\abs{\gest}} - h}_2^2 \leq \varepsilon \expec \norm{z - h}_2^2.$
\end{theorem}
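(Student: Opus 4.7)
The plan is to use the entry-wise projection bound $\norm{u/\abs{u} - v}_2 \leq 2\norm{u-v}_2$ for $v \in \calC_n$ (cited in the paper's footnote from [Liu 2017]) to reduce the goal to bounding $\expec\norm{\gest - h}_2^2$, where $\gest = Sz$ with $S := (I + \reg L)^{-1}$. Since $\gest$ is linear in $z$, I carry out a bias--variance decomposition centered at $\expec[z]$ rather than at $h$. Writing $\alpha := 1 - e^{-2\pi^2\sigma^2}$, one has $\expec[z] = (1-\alpha)h$, so setting $\xi := z - (1-\alpha)h$ gives $\expec[\xi] = 0$ together with $\expec[\xi\xi^*] = (2\alpha - \alpha^2) I$; the crucial cancellation is that the off-diagonal entries of $\expec[zz^*]$ exactly match those of $(1-\alpha)^2 hh^*$. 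Combined with the identity $I - S = S\reg L$, this yields the clean decomposition
\begin{equation*}
\expec \norm{\gest - h}_2^2 \;=\; \norm{S(\reg L + \alpha I) h}_2^2 \;+\; (2\alpha - \alpha^2)\, \Tr(S^2).
\end{equation*}

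Next I bound the two pieces spectrally. For the variance, splitting the indices of $\Tr(S^2) = \sum_j (1+\reg \lambda_j)^{-2}$ into the low-frequency block $\set{n} \cup \calL_{\lambbar}$ (where the summand is $\leq 1$) and its complement (where $\lambda_j \geq \lambbar$) yields $\Tr(S^2) \leq 1 + \abs{\calL_{\lambbar}} + \frac{n}{(1 + \reg \lambbar)^2}$. For the bias, applying $(\reg\lambda + \alpha)^2 \leq 2\reg^2 \lambda^2 + 2\alpha^2$ together with $\norm{S}_2 \leq 1$ gives $\norm{S(\reg L + \alpha I) h}_2^2 \leq 2\reg^2 \norm{L h}_2^2 + 2\alpha^2 n$. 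The key technical ingredient is the deterministic bound $\norm{L h}_2^2 \leq 2\triangle\, \smooth_n$, obtained by expanding $(Lh)_i = \sum_{j \sim i}(h_i - h_j)$, applying Cauchy--Schwarz per vertex (which introduces $d_i \leq \triangle$), and invoking \eqref{eq:smooth_cond}. This is the only place where the maximum degree $\triangle$ enters.

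Plugging these bounds into the decomposition and balancing the two $\reg$-dependent terms $\reg^2 \triangle \smooth_n$ and $\frac{\alpha n}{(\reg \lambbar)^2}$ yields the theorem's prescribed choice $\reg \asymp \left(\frac{\alpha n}{\triangle \smooth_n \lambbar^2}\right)^{1/4} \asymp \left(\frac{\sigma^2 n}{\triangle \smooth_n \lambbar^2}\right)^{1/4}$ (using $\alpha \asymp \sigma^2$ for $\sigma \lesssim 1$), at which each balanced term equals $(\alpha n \triangle \smooth_n / \lambbar^2)^{1/2}$. Requiring $\expec \norm{\gest - h}_2^2 \lesssim \varepsilon\, \expec \norm{z-h}_2^2 = 2\varepsilon \alpha n$ then forces, term by term: (i) $1 + \abs{\calL_{\lambbar}} \lesssim \varepsilon n$ from the low-frequency variance (the stated hypothesis), (ii) $\alpha \lesssim \varepsilon$ from the $\alpha^2 n$ bias piece (automatic under $\sigma \lesssim 1$ up to constants), and (iii) $\alpha \gtrsim \frac{\triangle \smooth_n}{\varepsilon^2 \lambbar^2 n}$ from the balanced bias--variance, which via $\alpha \asymp \sigma^2$ is exactly $\sigma \gtrsim \frac{1}{\varepsilon \lambbar}\sqrt{\triangle \smooth_n / n}$. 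The main delicacy is that centering the noise at $\expec[z]$ (to diagonalize the covariance) together with the sharp bound $\norm{L h}_2^2 \leq 2\triangle \smooth_n$ is precisely what produces the exponent $1/4$ in the optimal $\reg$; a cruder use of $h^* L h \leq \smooth_n$ alone would yield a weaker $1/3$ exponent and a stronger noise requirement. The balancing and book-keeping of constants is then routine.
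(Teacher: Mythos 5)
Your decomposition is essentially the paper's (Lemma~\ref{lem:ucqp_lem_err_1}--Lemma~\ref{lem:ucqp_lem_err_2}): the spectral split of the variance $\Tr(S^2)$ into $\set{n}\cup\calL_{\lambbar}$ versus the high-frequency block, the bound $\norm{Lh}_2^2 \leq 2\triangle\smooth_n$ (which is \eqref{eq:smooth_res2}), and the balancing that yields $\reg \asymp (\sigma^2 n/(\triangle\smooth_n\lambbar^2))^{1/4}$ are all exactly what the paper does. However, there is one genuine gap. By applying the projection bound $\norm{\mproj{\gest} - h}_2 \leq 2\norm{\gest - h}_2$ at scale $t = 1$, your bias term becomes $S(\reg L + \alpha I)h$ rather than $(S-I)h = -S\reg L h$. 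In the eigenbasis, its $j=n$ coordinate is $\frac{\alpha}{1}\dotprod{q_n}{h}$, so the bias contains the contribution $\alpha^2\abs{\dotprod{q_n}{h}}^2$, which can genuinely equal $\alpha^2 n \asymp \sigma^4 n$ (e.g.\ when $h$ is a constant phase vector, for which $\smooth_n = 0$ so nothing else in the hypotheses kicks in). You correctly note that absorbing this term into $\varepsilon\expec\norm{z-h}_2^2 \asymp \varepsilon\alpha n$ forces $\alpha\lesssim\varepsilon$, but you then assert this is ``automatic under $\sigma\lesssim 1$.'' It is not: since $\alpha\asymp\sigma^2$, the requirement is $\sigma\lesssim\sqrt{\varepsilon}$, strictly stronger than the stated hypothesis $\sigma\lesssim 1$ when $\varepsilon$ is small. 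As written, your argument only proves the theorem under the additional assumption $\sigma\lesssim\sqrt\varepsilon$ (which is what the paper has to impose for the harder \eqref{prog:trs} analysis, but not for \eqref{prog:ucqp}).

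The fix is precisely the scale-invariance observation the paper records right after Proposition~\ref{prop:entry_proj}: since $\mproj{w} = \mproj{tw}$ for every $t>0$, one has $\norm{\mproj{\gest} - h}_2 \leq 2\norm{t\gest - h}_2$ for any $t$, and Lemma~\ref{lem:ucqp_lem_err_1} picks $t = \expp{2} = (1-\alpha)^{-1}$. This recenters the observation exactly: $\expec[tz] = h$, so $t\gest - h$ splits into a bias $(S-I)h$ (whose $j=n$ coordinate vanishes because $\lambda_n=0$) plus a noise term $t\,S(z - \expec z)$, and the scaling $t$ only costs a harmless multiplicative $t^2 = \expp{4} \leq 4$ in the variance. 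With that one change, your remaining computations reproduce \eqref{eq:ucqp_sim_bd} with no parasitic $\sigma^4 n$ term, and the stated conditions follow.

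A minor aside: your closing remark that using only $h^*Lh\le\smooth_n$ would give a $1/3$ exponent and a \emph{stronger} noise requirement does not appear to be right upon inspection (the resulting lower bound on $\sigma$ can actually be weaker for, e.g., $P_n$), but this side-comment does not affect the main argument.
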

The parameter $\lambbar$ depends on the spectrum of the Laplacian and can be thought of as the ``cut-off frequency''. As can be seen from Theorem \ref{thm:ucqp_denoise_expec_main}, we would ideally like $\lambbar$ to be ``large'' and $\abs{\calL_{\lambbar}}$ to be ``small''; in particular, $\abs{\calL_{\lambbar}} = o(n)$. For e.g., when $G$ is the complete graph\footnote{\rev{Recall that in the complete graph, there is an edge $\set{i,j}$ for each $i \neq j$.}} ($K_n$), then $\lambda_n = 0$ and $\lambda_i = n$ for $i=1,\dots,n-1$. Hence, the choice $\lambbar = n$ is ideal as it leads to 
$\abs{\calL_{\lambbar}} = 0$. Furthermore, the noise regime in the Theorem involves a lower bound on $\sigma$ which might seem unnatural. We believe this is likely an artefact of the analysis involving the estimation error. Specifically, the error bound we obtain is of the form (see Lemma \ref{eq:ucqp_bd_1})
\begin{equation*}
\expec \norm{\frac{\gest}{\abs{\gest}} - h}_2^2 
    \lesssim  \frac{\sigma}{\lambbar} \sqrt{\triangle \smooth_n n} + \sigma^2(1 + \abs{\calL_{\lambbar}}).
\end{equation*}
The problematic term is the first term which depends linearly on $\sigma$. Indeed, since $\norm{z-h}_2^2 \asymp \sigma^2 n$ w.h.p, we end up with the lower bound requirement when $\sigma \ll 1$ for ensuring the denoising guarantee. Nevertheless, if $\frac{1}{\varepsilon \lambbar} \sqrt{\frac{{\triangle \smooth_n}}{n}} = o(1)$ as $n$ increases, the requirement on $\sigma$ is of the form $o(1) \leq \sigma \lesssim 1$ which becomes progressively mild as $n$ increases.

We also derive conditions under which denoising occurs with high probability\footnote{probability approaching $1$ as $n \rightarrow \infty$.} (w.h.p). Theorem \ref{thm:ucqp_denoise_prob_main} below is a simplified version of Theorem \ref{thm:ucqp_denoise_prob}.
\begin{theorem} \label{thm:ucqp_denoise_prob_main}
For any given $\varepsilon \in (0,1)$ and $\lambbar \in [\lambmin, \lambda_1]$, suppose that 
\begin{equation*}
    \max \set{\frac{1}{\varepsilon \lambbar} \sqrt{\frac{{\triangle \smooth_n}}{n}}, \frac{\log n}{\sqrt{\varepsilon n}}} \lesssim 
    \sigma \lesssim 1 \quad \text{ and }  \quad 
    1+\abs{\calL_{\lambbar}} + \sqrt{(1 + \abs{\calL_{\lambbar}}) \log n} \lesssim  \varepsilon n.
\end{equation*}
If $\reg \asymp (\frac{\sigma^2 n}{\triangle \smooth_n \lambbar^2})^{1/4}$, then w.h.p, the solution $\gest$ of \eqref{prog:ucqp} satisfies $\norm{\frac{\gest}{\abs{\gest}} - h}_2^2 \leq \varepsilon \norm{z - h}_2^2$.
\end{theorem}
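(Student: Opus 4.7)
The plan is to mirror the argument underlying Theorem \ref{thm:ucqp_denoise_expec_main} (i.e. Lemma \ref{eq:ucqp_bd_1}) but replace each expectation estimate by a high-probability concentration statement, and separately produce a matching lower bound on the denominator $\norm{z-h}_2^2$. Starting from the closed form $\gest = (I+\reg L)^{-1} z$, write
\begin{align*}
    \gest - h \;=\; (I+\reg L)^{-1}(z-h) \;-\; \reg(I+\reg L)^{-1} L h,
\end{align*}
and combine the two terms via the triangle inequality together with $\norm{\mproj{\gest}-h}_2 \leq 2\norm{\gest-h}_2$. The bias is bounded deterministically by the eigendecomposition of $L$: using $\frac{\reg^2 \lambda_j^2}{(1+\reg\lambda_j)^2} \leq \reg^2\lambda_j^2$ and $\lambda_1 \leq 2\triangle$, the smoothness assumption \eqref{eq:smooth_cond} yields $\norm{\reg(I+\reg L)^{-1} Lh}_2^2 \leq \reg^2\norm{Lh}_2^2 \leq 2\reg^2\triangle\smooth_n$.

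For the stochastic term, set $v := z - h$, so the $v_i = h_i(e^{\iota 2\pi\eta_i}-1)$ are independent, satisfy $\abs{v_i}\leq 2$, and have $\expec\abs{v_i}^2 = 2(1-e^{-2\pi^2\sigma^2})\asymp\sigma^2$ for $\sigma \lesssim 1$. Splitting the eigenexpansion at $\lambbar$ gives
\begin{align*}
    \norm{(I+\reg L)^{-1} v}_2^2 \;=\; \sum_{j=1}^{n}\frac{\abs{q_j^T v}^2}{(1+\reg\lambda_j)^2} \;\leq\; v^* Q_{\lambbar} v \;+\; \frac{\norm{v}_2^2}{(\reg\lambbar)^2},
\end{align*}
where $Q_{\lambbar} := q_n q_n^T + \sum_{j\in\calL_{\lambbar}} q_j q_j^T$ is the orthogonal projector of rank $1+\abs{\calL_{\lambbar}}$ onto the low-frequency part of the spectrum. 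The quadratic form $v^* Q_{\lambbar} v$ has mean $\asymp \sigma^2(1+\abs{\calL_{\lambbar}})$, and a Hanson--Wright / Bernstein-type concentration for bounded entries (applied to the $2n$-dimensional real vector $(\real(v),\imag(v))$ with the block-lifted form of $Q_{\lambbar}$) yields $v^* Q_{\lambbar} v \lesssim \sigma^2(1+\abs{\calL_{\lambbar}}) + \sigma^2\sqrt{(1+\abs{\calL_{\lambbar}})\log n}$ w.h.p. Similarly, $\norm{v}_2^2=\sum_i\abs{v_i}^2$ is a sum of independent variables bounded by $4$, so Bernstein gives both the upper bound $\norm{v}_2^2 \lesssim \sigma^2 n$ and a two-sided deviation $\bigl|\norm{v}_2^2 - \expec\norm{v}_2^2\bigr| \lesssim \sigma\sqrt{n\log n}+\log n$ w.h.p.

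Combining the bias and noise bounds and substituting $\reg \asymp (\sigma^2 n /(\triangle\smooth_n\lambbar^2))^{1/4}$, which balances the high-frequency contribution $\sigma^2 n/(\reg\lambbar)^2$ against the bias $\reg^2\triangle\smooth_n$ to give $\frac{\sigma}{\lambbar}\sqrt{\triangle\smooth_n n}$, produces
\begin{align*}
    \norm{\mproj{\gest}-h}_2^2 \;\lesssim\; \sigma^2\bigl(1+\abs{\calL_{\lambbar}}\bigr) \;+\; \sigma^2\sqrt{(1+\abs{\calL_{\lambbar}})\log n} \;+\; \frac{\sigma}{\lambbar}\sqrt{\triangle\smooth_n n}
\end{align*}
w.h.p. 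Dividing by the lower bound $\norm{z-h}_2^2 \gtrsim \sigma^2 n$, the three hypotheses of the theorem are exactly what make each summand at most $\varepsilon\norm{z-h}_2^2/4$: the condition $1+\abs{\calL_{\lambbar}}+\sqrt{(1+\abs{\calL_{\lambbar}})\log n}\lesssim\varepsilon n$ absorbs the low-frequency noise part, the condition $\sigma\gtrsim\frac{1}{\varepsilon\lambbar}\sqrt{\triangle\smooth_n/n}$ absorbs the balanced bias/variance term, and the condition $\sigma\gtrsim\log n/\sqrt{\varepsilon n}$ guarantees that the Bernstein deviation of $\norm{v}_2^2$ is dominated by $\varepsilon\sigma^2 n$ so that the denominator lower bound holds. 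The main technical obstacle is obtaining the concentration of the complex quadratic form $v^* Q_{\lambbar} v$ with the rank-dependent $\sqrt{(1+\abs{\calL_{\lambbar}})\log n}$ deviation: the coupling of $\real(v_i)$ and $\imag(v_i)$ through the common noise $\eta_i$ precludes a direct invocation of the standard real Hanson--Wright inequality and forces careful bookkeeping of the lifted block structure of $Q_{\lambbar}$ in $\matR^{2n}$.
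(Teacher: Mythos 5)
Your high-level plan (deterministic bias bound, eigendecomposition split at $\lambbar$, Bernstein for the linear/norm terms, Hanson--Wright for the low-frequency quadratic form, then divide by a lower bound on $\norm{z-h}_2^2 \gtrsim \sigma^2 n$) is essentially the paper's, and you have also correctly identified the real technical annoyance (real/imaginary coupling in the Hanson--Wright step, handled in the paper by splitting and applying Bellec's bound to the two real quadratic forms separately). However, there is a genuine gap caused by the choice of centering.

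You decompose $\gest - h = (I+\reg L)^{-1}(z-h) - \reg(I+\reg L)^{-1}Lh$ and then treat $v := z-h$ as the random object. But $v$ is \emph{not} zero-mean: $\expec[v] = (\expn{2}-1)h$, so the claim that $v^* Q_{\lambbar} v$ has mean $\asymp \sigma^2(1+\abs{\calL_{\lambbar}})$ is incorrect. Writing $v = (z-\zbar) + (\zbar-h)$ with $\zbar := \expec[z] = \expn{2} h$, the true mean is
\begin{equation*}
\expec[v^* Q_{\lambbar} v] \;=\; (1+\abs{\calL_{\lambbar}})(1-\expn{4}) \;+\; (1-\expn{2})^2\, h^* Q_{\lambbar} h \;\asymp\; \sigma^2(1+\abs{\calL_{\lambbar}}) \;+\; \sigma^4\, h^* Q_{\lambbar} h,
\end{equation*}
and the smoothness hypothesis \eqref{eq:smooth_res1} forces $h^* Q_{\lambbar} h \geq n - \smooth_n/\lambbar \gtrsim n$, so the second piece is of order $\sigma^4 n$. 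This term does \emph{not} appear in your claimed final bound, and absorbing it into $\varepsilon\norm{z-h}_2^2 \asymp \varepsilon\sigma^2 n$ would require $\sigma \lesssim \sqrt{\varepsilon}$, which is strictly stronger than the hypothesis $\sigma \lesssim 1$ in the theorem you are proving. (This is precisely the extra restriction that shows up in the \eqref{prog:trs} analysis, where a $\sigma^4 n$ term is genuinely unavoidable, cf.\ Theorem \ref{thm:trs_err_bd_prob}; for \eqref{prog:ucqp} the paper gets a clean bound without it.)

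The paper avoids this by exploiting the scale invariance $\mproj{\gest} = \mproj{t\gest}$ for any $t>0$ (remarked after Proposition \ref{prop:entry_proj}), choosing $t = \expp{2}$, and decomposing $\expp{2}\gest - h = (I+\reg L)^{-1}(\expp{2} z - h) + [(I+\reg L)^{-1}h - h]$. Crucially $\expp{2} z - h = \expp{2}(z - \zbar)$ is \emph{exactly} centered (up to the harmless scalar $\expp{2}\asymp 1$), so the stochastic term carries no hidden shrinkage-bias and no $\sigma^4 n$ contribution; the deterministic bias term is the same $\reg^2\triangle\smooth_n$ you derived. To repair your argument you should replace $v = z-h$ by $v = \expp{2}z-h$ (equivalently $z-\zbar$) throughout the stochastic part, which recovers Lemma \ref{lem:ucqp_lem_err_1} and Theorem \ref{thm:ucqp_prob_bd} as in the paper.
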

The conditions in Theorem \ref{thm:ucqp_denoise_prob_main} are slightly more stringent compared to those in Theorem \ref{thm:ucqp_denoise_expec_main} due to the appearance of extra $\log$ factors. These arise due to the concentration inequalities used in the analysis for bounding the estimation error, see Theorem \ref{thm:ucqp_prob_bd}.

In Section \ref{sec:l2_UCQP}, we interpret our results for special graphs\footnote{\secrev{The choice of the graphs $K_n, P_n$ and $S_n$ is only for convenience, one could consider other connected graphs as well.}} such as $K_n, P_n$ and the star graph\footnote{\rev{Recall that a star graph is a tree with one vertex having degree $n-1$, and the remaining $n-1$ vertices having degree $1$.}} ($S_n$); see  Corollaries \ref{cor:ucqp_den_expec}, \ref{cor:ucqp_den_prob}. In particular, the statement for $P_n$ therein can be applied to the example from Section \ref{subsec:prob_setup}, readily yielding conditions that ensure denoising; see Corollary \ref{cor:ucqp_den_func_mod}. It states that if  $\frac{\log n}{\sqrt{n}} \lesssim \sigma \lesssim 1$ and $n \gtrsim 1$, then for $\reg \asymp \left(\frac{\sigma^2 n^{10/3}}{M^2} \right)^{1/4}$ the solution $\gest$ of \eqref{prog:ucqp} satisfies (w.h.p)
    \begin{equation*}
       \norm{\frac{\gest}{\abs{\gest}} - h}_2^2 \lesssim (\sigma M + \sigma^2) n^{2/3} + \log n.
    \end{equation*}
Hence for any $\varepsilon \in (0,1)$, in the noise regime $\max\set{\frac{M}{\varepsilon n^{1/3}}, \frac{\log n}{ \sqrt{\varepsilon n}} } \lesssim \sigma \lesssim 1$, if $n \gtrsim (1/\varepsilon)^3$, then \eqref{prog:ucqp} denoises $z$ w.h.p.

%-----------------------
% Results for (TRS)
%-----------------------
%
\paragraph{Results for \eqref{prog:trs}.} We now describe conditions under which the estimator \eqref{prog:trs} provably denoises $z \in \calC_n$ w.h.p. Theorem \ref{thm:trs_prob_denoise_main} below is a simplified version of Theorem \ref{thm:trs_prob_denoise}. 
%
%
%----------------------------------------------
% Main theorem for provably denoising by TRS
%----------------------------------------------
\begin{theorem} \label{thm:trs_prob_denoise_main}
For any  $\varepsilon \in (0,1)$,  given $k \in [n-1]$ s.t $\lambda_{n-k+1} < \lambda_{n-k}$  and $\lambbar \in [\lambmin, \lambda_1]$ with the choice $\reg \asymp (\frac{\sigma^2 n}{\triangle \smooth_n \lambbar^2})^{1/4}$, suppose that the following conditions are satisfied.

\begin{enumerate}[label=\upshape(\roman*)]
\item $\smooth_n \lesssim  \min\set{n \lambda_{n-k}, n \lambbar}$,  and $1 + \abs{\calL_{\lambbar}} + \sqrt{(1 + \abs{\calL_{\lambbar}}) \log n} \lesssim \varepsilon n$.
    
 \item $\sigma \lesssim
\min\set{\sqrt{\varepsilon}, \frac{\lambbar}{\lambda_{n-k+1}^2} \sqrt{\frac{\triangle \smooth_n}{n}}}$ and 
    \begin{align*}
        \sigma \gtrsim \max \left\{\left(\frac{\log n}{\sqrt{n}} \right)^{1/2}, \frac{\smooth_n}{n \lambda_{n-k} \sqrt{\varepsilon}}, \sqrt{\frac{\log n}{n \varepsilon}}, \frac{1}{\varepsilon \lambbar} \left(\sqrt{\frac{\triangle \smooth_n}{n}} + \lambda_{n-k+1}^2 \sqrt{\frac{n}{\triangle \smooth_n}} \right) \right\}. 
    \end{align*}
\end{enumerate}    
Then the (unique) solution $\gest \in \mathbb{C}^n$ of \eqref{prog:trs} satisfies $\norm{\frac{\gest}{\abs{\gest}} - h}_2^2 \leq \varepsilon \norm{z - h}_2^2$ w.h.p.
\end{theorem}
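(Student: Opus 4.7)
My plan is to leverage the analysis of \eqref{prog:ucqp} by recognising that both the TRS and UCQP estimators are resolvents of $\gamma L$ applied to $z$, differing only in a scalar shift. The main task is then to pin down the Lagrange multiplier associated with the sphere constraint and absorb its effect as a perturbation of the UCQP bound.

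First I would apply Wirtinger calculus to the Lagrangian $-2\real(g^*z) + \gamma g^*Lg + \mu(g^*g - n)$ to obtain that any TRS minimizer satisfies $(\gamma L + \mu^* I)\gest = z$, hence
\begin{equation*}
\gest = (\gamma L + \mu^* I)^{-1} z = \sum_{j=1}^n \frac{q_j^* z}{\gamma \lambda_j + \mu^*}\, q_j,
\end{equation*}
with $\mu^*$ chosen so that $\|\gest\|_2^2 = n$. Since $z \in \calC_n$ gives $\|z\|_2^2 = n$ while $(I+\gamma L)^{-1}z$ has strictly smaller norm whenever $z$ is not constant, the secular function $F(\mu) = \sum_j |q_j^* z|^2/(\gamma \lambda_j + \mu)^2$ is strictly decreasing on $(0,\infty)$ and crosses $n$ at a unique $\mu^* \in (0,1)$. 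The next step is to localise $\mu^*$ near $1$ with high probability. Writing $z = h + w$ and decomposing $F(\mu^*)=n$ along the spectral split induced by the gap $\lambda_{n-k+1} < \lambda_{n-k}$, the low-frequency modes $j \in \{n-k+1,\dots,n\}$ are controlled by the smoothness bound $h^* L h \leq \smooth_n$, while the high-frequency modes $j \leq n-k$ are controlled by the sub-Gaussian / Hanson--Wright concentration inequalities for $q_j^* w$ already used in the proof of Theorem \ref{thm:ucqp_denoise_prob_main}. Matching leading terms on both sides should bracket $\mu^*$ in a window around $1$ whose width is governed by $\sigma$, $\smooth_n$, $\triangle$, and the eigengap; the somewhat idiosyncratic lower-bound term $\lambda_{n-k+1}^2 \sqrt{n/(\triangle \smooth_n)}/(\varepsilon \lambbar)$ appearing in condition (ii) is exactly what one would expect from this matching.

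With $\mu^*$ localised, the resolvent identity
\begin{equation*}
(\gamma L + \mu^* I)^{-1} - (\gamma L + I)^{-1} = (1-\mu^*)(\gamma L + I)^{-1}(\gamma L + \mu^* I)^{-1}
\end{equation*}
applied to $z$ yields $\|\gest - (I + \gamma L)^{-1} z\|_2 \leq \tfrac{|1-\mu^*|}{\mu^*}\sqrt{n}$, and combining this with the UCQP denoising bound of Theorem \ref{thm:ucqp_denoise_prob_main} together with the contraction $\|\mproj{\gest} - h\|_2 \leq 2\|\gest - h\|_2$ of Liu et al.\ delivers the required inequality $\|\mproj{\gest} - h\|_2^2 \leq \varepsilon \|z - h\|_2^2$. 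The principal obstacle I anticipate is the second step: the secular equation is transcendental and couples signal and noise across all eigenmodes, so to ensure $|1 - \mu^*| \lesssim \sqrt{\varepsilon}\,\sigma$ (the scale needed so that the sphere perturbation does not overwhelm the UCQP denoising margin of order $\sigma\sqrt{n}$) one has to trade off the noise regime against the spectral structure of $L$. The role of $k$, the gap $\lambda_{n-k+1} < \lambda_{n-k}$, and the seemingly baroque lower bound on $\sigma$ in condition (ii) is evidently to make this trade-off feasible, and tight bookkeeping here will be where the bulk of the technical effort lies.
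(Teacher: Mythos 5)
Your plan follows the same overall skeleton as the paper's proof: characterize the TRS optimizer through its Lagrange multiplier $\mu^\star$, localize $\mu^\star$ near its "UCQP value," and transfer the UCQP error analysis. That broad outline is correct, and the scale you identify for the multiplier perturbation, $|1-\mu^\star| \lesssim \sqrt{\varepsilon}\,\sigma$, is exactly the right target. However there are two places where your route diverges from the paper in ways that matter.

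First, your proposal for bounding $\mu^\star$ is harder than necessary. You propose "matching leading terms on both sides" of the secular equation $F(\mu^\star)=n$, decomposing the secular function across the spectral gap and controlling both the low- and high-frequency contributions (the latter via Hanson--Wright). The paper's Lemma \ref{lemma:useful_trs_sol} shows you only need a one-sided estimate and only on the low-frequency block: since $\phi(\mu) = 4\sum_j |q_j^*z|^2/(2\gamma\lambda_j+\mu)^2$ is monotone decreasing, the identity $\phi(\mu^\star)=n$ combined with the trivial minorization
\begin{equation*}
n \;=\; \phi(\mu^\star) \;\geq\; \frac{\sum_{j:\lambda_j\leq\lambtil}|q_j^*z|^2}{(\gamma\lambtil + \mu^\star/2)^2}
\end{equation*}
gives a lower bound on $\mu^\star$ depending only on the low-frequency partial mass, while the upper bound $\mu^\star\leq 2$ is immediate. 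The low-frequency mass is then lower-bounded in Lemma \ref{lem:lowbd_mu_prob} using the smoothness bound \eqref{eq:smooth_res1} and Proposition \ref{prop:conc_bounds}\ref{prop:conc_bds_1}. No balancing against the high-frequency modes is required; avoiding that makes the bookkeeping that worried you essentially disappear.

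Second, your final composition step as written does not quite close. You propose $\|\mproj{\gest}-h\|_2 \leq 2\|\gest-h\|_2 \leq 2\|\gest - (I+\gamma L)^{-1}z\|_2 + 2\|(I+\gamma L)^{-1}z - h\|_2$, combining the resolvent perturbation with "the UCQP denoising bound of Theorem \ref{thm:ucqp_denoise_prob_main}." But that theorem bounds $\|\mproj{\gest_{\mathrm{UCQP}}}-h\|_2$, not the unprojected quantity $\|(I+\gamma L)^{-1}z-h\|_2$, and the projection is nonlinear so you cannot directly substitute one for the other. Moreover the UCQP analysis works through the rescaled quantity $\expp{2}\gest$ before projecting (Lemma \ref{lem:ucqp_lem_err_1}), so the "raw" $\ell_2$ distance you need is not what the UCQP theorem controls. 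The paper avoids this by re-running the $e_1,e_2$ decomposition of Lemma \ref{lem:ucqp_lem_err_1} directly on the TRS solution (Lemma \ref{lem:trs_err_bd_det}), tracking $\mu^\star$ through the spectral sums and obtaining a bound of the form $\frac{32}{(\mu^\star)^2}(E_1+E_2) + 8(\frac{2}{\mu^\star}-1)^2(\ldots)$; the first piece inherits the UCQP bounds on $E_1,E_2$ and the second piece is the multiplier-perturbation term of order $n(\frac{2}{\mu^\star}-1)^2$, which is what your resolvent estimate captures but packaged so the projection step can be applied cleanly. Your plan is salvageable if you replace the invocation of the final UCQP theorem with the intermediate bound on $E_1+E_2$ (Theorem \ref{thm:ucqp_prob_bd}), but as stated there is a genuine gap in the composition. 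Also, a minor slip: the resolvent estimate should give $\|\gest-\gest_{\mathrm{UCQP}}\|_2 \leq |1-\mu^\star|\,\|(\gamma L + I)^{-1}\|_2\,\|\gest\|_2 = |1-\mu^\star|\sqrt{n}$, without the extra $1/\mu^\star$ factor.
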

The above statement is admittedly more convoluted than that for $\eqref{prog:ucqp}$ which is primarily due to the more intricate nature of the estimation error analysis. An interesting aspect of the above result is that we can consider the ``best'' choice of $k$ (satisfying $\lambda_{n-k+1} < \lambda_{n-k}$) which leads to the mildest constraints on $\sigma$ and $\smooth_n$. Note that since $G$ is connected (by assumption), $k=1$ always satisfies this condition ($0 = \lambda_n < \lambda_{n-1}$). This leads to the following useful Corollary which is a simplified version of Corollary \ref{cor:trs_high_prob_simp_den}.
%
%
%------------------------------------------------------------
% Simplified corollary for TRS, denoising high probability
%------------------------------------------------------------
%
\begin{corollary} \label{cor:trs_high_prob_simp_den_main}
For any  $\varepsilon \in (0,1)$ and $\lambbar \in [\lambmin, \lambda_1]$ with the choice $\reg \asymp (\frac{\sigma^2 n}{\triangle \smooth_n \lambbar^2})^{1/4}$, suppose that the following conditions are satisfied.

\begin{enumerate}[label=\upshape(\roman*)]
    \item  $\smooth_n \lesssim  n \lambmin$ and $1 + \abs{\calL_{\lambbar}} + \sqrt{(1 + \abs{\calL_{\lambbar}}) \log n} \lesssim  \varepsilon n$.
    
     \item   
    %
    %
    %\begin{align*}
      $   \max\set{ \left(\frac{\log n}{\sqrt{n}} \right)^{1/2},  \frac{\smooth_n}{n \lambmin \sqrt{\varepsilon}}, \sqrt{\frac{\log n}{n}}, 
        \frac{1}{\varepsilon \lambbar}  \sqrt{\frac{\triangle \smooth_n}{n}}} \lesssim \sigma \lesssim \sqrt{\varepsilon}.$
    %\end{align*}
%
\end{enumerate}    
Then the (unique) solution $\gest \in \mathbb{C}^n$ of \eqref{prog:trs} satisfies $\norm{\frac{\gest}{\abs{\gest}} - h}_2^2 \leq \varepsilon \norm{z - h}_2^2$ w.h.p.
\end{corollary}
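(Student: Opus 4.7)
The plan is to derive Corollary \ref{cor:trs_high_prob_simp_den_main} as a direct specialization of Theorem \ref{thm:trs_prob_denoise_main} by choosing $k = 1$. Since $G$ is connected by assumption, the Fiedler value satisfies $\lambda_{\min} > 0$, and consequently $\lambda_{n-k+1} = \lambda_n = 0 < \lambda_{n-1} = \lambda_{\min} = \lambda_{n-k}$, so the spectral gap hypothesis $\lambda_{n-k+1} < \lambda_{n-k}$ of the theorem is satisfied with $k=1$. This is the natural choice because $k = 1$ is always feasible and, as the discussion following Theorem \ref{thm:trs_prob_denoise_main} indicates, we are free to pick the $k$ that yields the mildest conditions on $\sigma$ and $\smooth_n$.

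With $k = 1$ substituted into condition (i) of Theorem \ref{thm:trs_prob_denoise_main}, the requirement $\smooth_n \lesssim \min\{n\lambda_{n-k}, n\lambbar\}$ becomes $\smooth_n \lesssim \min\{n\lambda_{\min}, n\lambbar\}$. Because $\lambbar \in [\lambda_{\min}, \lambda_1]$ by assumption, $n\lambda_{\min} \leq n\lambbar$, so the minimum is $n\lambda_{\min}$. The condition $1 + |\calL_{\lambbar}| + \sqrt{(1 + |\calL_{\lambbar}|)\log n} \lesssim \varepsilon n$ transfers verbatim. This gives exactly clause (i) of the corollary.

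For condition (ii), the key observation is that $\lambda_{n-k+1} = 0$ causes two simplifications. First, the upper bound $\sigma \lesssim \frac{\lambbar}{\lambda_{n-k+1}^2}\sqrt{\triangle \smooth_n/n}$ becomes vacuous (the right-hand side is $+\infty$), so the only remaining upper bound is $\sigma \lesssim \sqrt{\varepsilon}$. Second, in the lower bound, the contribution $\lambda_{n-k+1}^2\sqrt{n/(\triangle \smooth_n)}$ drops to $0$, leaving the last term as $\frac{1}{\varepsilon \lambbar}\sqrt{\triangle \smooth_n/n}$. Substituting $\lambda_{n-k} = \lambda_{\min}$ into $\frac{\smooth_n}{n\lambda_{n-k}\sqrt{\varepsilon}}$ gives $\frac{\smooth_n}{n\lambda_{\min}\sqrt{\varepsilon}}$, and the terms $(\log n/\sqrt{n})^{1/2}$ and $\sqrt{\log n/(n\varepsilon)}$ carry over (the latter being absorbed into $\sqrt{\log n/n}$ up to the $\varepsilon$-dependence, which may be hidden in the $\lesssim$ notation since $\varepsilon \in (0,1)$ can be folded into the implicit constant when we interpret the simplified statement). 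Putting these together yields clause (ii) of the corollary, and the conclusion on $\gest$ is inherited verbatim from the theorem.

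I do not expect a genuine obstacle here, since the result is a specialization rather than a new argument. The only minor subtlety is making the two simplifications arising from $\lambda_{n-k+1} = 0$ rigorously (namely that a $\frac{\text{const}}{0}$ upper bound is vacuous, and that the $\lambda_{n-k+1}^2(\cdot)$ term vanishes from the lower bound). These are bookkeeping matters, but one has to be careful to state them cleanly so that the simplified condition is logically implied by the theorem's hypotheses.
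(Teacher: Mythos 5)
Your approach is exactly the paper's: the paper states (just before Corollary \ref{cor:trs_high_prob_simp_den}) that the corollary is obtained from Theorem \ref{thm:trs_prob_denoise} by taking $k=1$, and the statement you are asked to prove is just the ``main results'' version of that specialization. Setting $k=1$ gives $\lambda_{n-k+1} = \lambda_n = 0$ and $\lambda_{n-k} = \lambmin$, and all the substitutions you carry out (the minimum collapsing to $n\lambmin$ since $\lambbar \geq \lambmin$, the $\lambbar/\lambda_{n-k+1}^2$ upper bound becoming vacuous, the $\lambda_{n-k+1}^2$ contribution vanishing from the lower bound) are the right ones.

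One point in your write-up is not rigorous as stated, though. You justify replacing the theorem's lower bound $\sigma \gtrsim \sqrt{\log n/(n\varepsilon)}$ by the corollary's $\sqrt{\log n/n}$ on the grounds that $\varepsilon \in (0,1)$ ``can be folded into the implicit constant.'' That does not work: $\varepsilon$ is a free parameter, not an absolute constant, and the other terms in the very same display retain their explicit $\varepsilon$-dependence (e.g., $\smooth_n/(n\lambmin\sqrt{\varepsilon})$ and $\frac{1}{\varepsilon\lambbar}\sqrt{\triangle\smooth_n/n}$), so the convention cannot be that $\varepsilon$ is hidden by $\lesssim$. The requirement $\sigma \gtrsim \sqrt{\log n/n}$ is genuinely weaker than $\sigma \gtrsim \sqrt{\log n/(n\varepsilon)}$ when $\varepsilon$ is small. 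In fact the unsimplified Corollary \ref{cor:trs_high_prob_simp_den} does keep the $\varepsilon$: the relevant term there is $\sqrt{(5C_4/(\varepsilon\pi^2))\,\log n/n}$. So the $\varepsilon$-free term in the main-results statement is a slight imprecision of the paper itself, and the correct way to present your proof is to keep $\sqrt{\log n/(n\varepsilon)}$ in the condition (which dominates $\sqrt{\log n/n}$ and hence implies it), rather than to claim the $\varepsilon$ disappears into a constant.
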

It is natural to ask whether Corollary \ref{cor:trs_high_prob_simp_den_main} is -- for all practical purposes -- sufficient, compared to Theorem \ref{thm:trs_prob_denoise_main}. For the complete graph $K_n$, observe that the only valid choice is $k=1$, hence we need Corollary \ref{cor:trs_high_prob_simp_den_main}. However, as we will see in Section \ref{sec:l2_TRS}, it turns out that for the path graph $P_n$, Corollary \ref{cor:trs_high_prob_simp_den_main} leads to unreasonably strict conditions on $\sigma$ and $\smooth_n$ (see Remark \ref{rem:trs_exgraph_large_n}). In fact for the path graph, $\lambda_{n-k+1} < \lambda_{n-k}$ holds for each $k \in [n-1]$, hence a careful choice of $k$ in Theorem \ref{thm:trs_prob_denoise_main} is key to obtaining satisfactory conditions, see Corollary \ref{cor:Pn_res_bet_conds}. 

In the setting of the example from Section \ref{subsec:prob_setup}, Corollary \ref{cor:Pn_res_bet_conds} roughly states that for any $\varepsilon \in (0,1)$, in the noise regime $(\frac{\log n}{\sqrt{n}})^{1/2} \lesssim \sigma \lesssim \sqrt{\varepsilon}$ with $n$ large enough and a suitably chosen $\reg$, \eqref{prog:trs} denoises $z$ w.h.p. This condition on $\sigma$ is visibly stricter than that for \eqref{prog:ucqp}; we believe that this is likely an artefact of the analysis. Nevertheless, for this example, we see for $\varepsilon$ fixed and $n \rightarrow \infty$ that both \eqref{prog:trs} and \eqref{prog:ucqp} provably denoise $z$ w.h.p in the noise regime $o(1) \leq \sigma \lesssim 1$.
\paragraph{Outline of the paper.} The rest of the paper is organized as follows. Section \ref{sec:prelim} introduces some preliminaries involving certain intermediate facts and technical results that will be needed in our analysis. Section \ref{sec:l2_UCQP} contains the analysis for \eqref{prog:ucqp} while Section \ref{sec:l2_TRS} analyzes  \eqref{prog:trs}. \rev{Section \ref{sec:sims} contains some simulation results on synthetic examples for \eqref{prog:ucqp} and \eqref{prog:trs}.} We conclude with Section \ref{sec:discussion} which contains a discussion with related work from the literature, and directions for future work. 

% Problem Setup
%-------------------
% Preliminaries
%-------------------
\section{Preliminaries} \label{sec:prelim}
This section summarizes some useful technical tools that will be employed at multiple points in our analysis. We begin by deriving some simple consequences of the smoothness assumption in \eqref{eq:smooth_cond}.
%
% Smooth phase signal
%\subsection{Smooth phase signals} 
\paragraph{Smooth phase signals.} Similar to \eqref{eq:low_freq_set}, for $\lambda \in [\lambda_{\min}, \lambda_1]$, let us define the set
\begin{equation*}
    \calH_{\lambda} := \set{j \in [n-1]: \lambda_j \geq \lambda}
\end{equation*}
consisting of indices corresponding to the ``high frequency'' part of the spectrum of $L$. Then using \eqref{eq:smooth_cond} and the fact $\norm{h}_2^2 = n$, it is not difficult to establish that 
\begin{equation} \label{eq:smooth_res1}
  \sum_{j \in \calH_{\lambda}} \abs{\dotprod{h}{q_j}}^2 \leq \frac{\smooth_n}{\lambda}, \qquad \sum_{j \in  \calL_{\lambda} \cup \set{n}} \abs{\dotprod{h}{q_j}}^2 \geq n - \frac{\smooth_n}{\lambda}.    
\end{equation}
Hence the smaller $\smooth_n$ is, the more correlated $h$ is with the eigenvectors corresponding to $\set{n} \cup \calL_{\lambda}$. It is also useful to note that since 
$$\abs{(L h)_i}^2 = \abs{\sum_{j:\set{i,j} \in E} (h_i - h_j)}^2 \leq \triangle \sum_{j:\set{i,j} \in E} \abs{h_i - h_j}^2, $$ hence $\norm{L h}_2^2 \leq 2 \triangle h^* L h \leq 2\triangle \smooth_n$ which is equivalent to 
\begin{equation} \label{eq:smooth_res2}
   \sum_{j = 1}^{n-1} \lambda_j^2 \abs{\dotprod{h}{q_j}}^2  \leq 2 \triangle \smooth_n.
\end{equation}
Finally, recall the setup in Section \ref{subsec:prob_setup} where we obtain noisy modulo samples of a smooth function $f$ on a uniform grid (with $G = P_n$). In this setting, we can relate $\smooth_n$ to the quadratic variation of $f$ on the grid. Indeed, using the fact $\abs{h_i - h_{i+1}} \leq 2 \pi \abs{f(x_i) - f(x_{i+1})}$ (see proof of \cite[Lemma 8]{CMT18_long}), it follows that
\begin{equation} \label{eq:func_quad_var_Bn}
    \sum_{i=1}^{n-1} \abs{h_i - h_{i+1}}^2 \leq 4\pi^2 \sum_{i=1}^{n-1} \abs{f(x_i) - f(x_{i+1})}^2.
\end{equation}

%--------------
% Noise model
%--------------
\paragraph{Noise model.} 
Next, we collect some useful results related to the random noise model in \eqref{eq:noise_mod}. The following Proposition is easy to verify, its proof is provided in the appendix for completeness.
\begin{proposition} \label{prop:noise_expec_iden}
Denote $z = [z_1,\dots, z_n]^T \in \calC_n$ with $z_i$ as defined in \eqref{eq:noise_mod}. Let $u \in \mathbb{C}^n$ be given with $\norm{u}_2 = 1$. Then the following is true.
\begin{enumerate}[label=\upshape(\roman*)]
    \item \label{item:noise_expec_iden_1} $\expec[z] = e^{-2\pi^2 \sigma^2} h$.

    \item \label{item:noise_expec_iden_2} $\expec\left[\abs{\dotprod{z- e^{-2\pi^2 \sigma^2} h}{u}}^2 \right] = (1- e^{-4\pi^2 \sigma^2})$.
    
    \item \label{item:noise_expec_iden_3} $\expec[\abs{\dotprod{z}{u}}^2] = e^{-4\pi^2 \sigma^2} \abs{\dotprod{h}{u}}^2 + (1- e^{-4\pi^2 \sigma^2})$.
    
    \item \label{item:noise_expec_iden_4} $\expec \left[\norm{z -\expn{2} h}_2^2 \right] = n (1- e^{-4\pi^2 \sigma^2})$.
    
    \item \label{item:noise_expec_iden_5} $\expec[\norm{z-h}_2^2] = 2n (1-e^{-2\pi^2 \sigma^2})$.
\end{enumerate}
In particular, if $\sigma \leq \frac{1}{\pi \sqrt{2}}$ then the expected distance of $z$ from $h$ can be bounded as
\begin{equation} \label{eq:expec_zh_dist_bds}
    2\pi^2 \sigma^2 n \leq \expec[\norm{z-h}_2^2] \leq 4\pi^2 \sigma^2 n.
\end{equation}
\end{proposition}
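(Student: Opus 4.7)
The plan is to reduce everything to the single scalar identity $\expec[e^{\iota 2\pi \eta_i}] = e^{-2\pi^2 \sigma^2}$, which is just the characteristic function of $\calN(0,\sigma^2)$ evaluated at $t = 2\pi$. Since each $|h_i| = 1$ and each $|e^{\iota 2\pi \eta_i}| = 1$, all coordinate-wise computations will be clean: only the first and second moments of $e^{\iota 2\pi \eta_i}$ enter, and independence of the $\eta_i$'s kills all cross terms.

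For part \ref{item:noise_expec_iden_1}, the computation is immediate: $\expec[z_i] = h_i \expec[e^{\iota 2\pi \eta_i}] = e^{-2\pi^2\sigma^2} h_i$. For part \ref{item:noise_expec_iden_2}, I set $w := z - e^{-2\pi^2\sigma^2} h$ so $w_i = h_i(e^{\iota 2\pi \eta_i} - e^{-2\pi^2\sigma^2})$; then $\expec[w_i] = 0$ and, by independence of the $\eta_i$'s, $\expec[w_i \overline{w_j}] = 0$ for $i \neq j$. A direct expansion using $|h_i|=1$ and $\expec[e^{\iota 2\pi \eta_i}] = \expec[e^{-\iota 2\pi \eta_i}] = e^{-2\pi^2\sigma^2}$ gives $\expec[|w_i|^2] = 1 - 2e^{-4\pi^2\sigma^2} + e^{-4\pi^2\sigma^2} = 1 - e^{-4\pi^2\sigma^2}$, and hence
\[
\expec[|\dotprod{w}{u}|^2] \;=\; \sum_i \expec[|w_i|^2] |u_i|^2 \;=\; (1 - e^{-4\pi^2\sigma^2}) \norm{u}_2^2 \;=\; 1 - e^{-4\pi^2\sigma^2}.
\]
Part \ref{item:noise_expec_iden_3} follows by writing $\dotprod{z}{u} = \dotprod{w}{u} + e^{-2\pi^2\sigma^2}\dotprod{h}{u}$; expanding $|\cdot|^2$ and using $\expec[\dotprod{w}{u}] = 0$ kills the cross term, leaving $(1 - e^{-4\pi^2\sigma^2}) + e^{-4\pi^2\sigma^2}|\dotprod{h}{u}|^2$. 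Part \ref{item:noise_expec_iden_4} is part \ref{item:noise_expec_iden_2} summed against any orthonormal basis (equivalently, $\sum_i \expec[|w_i|^2]$). For part \ref{item:noise_expec_iden_5}, I use $|z_i - h_i|^2 = |e^{\iota 2\pi \eta_i} - 1|^2 = 2 - 2\cos(2\pi\eta_i)$ and take expectations via $\expec[\cos(2\pi \eta_i)] = \real \expec[e^{\iota 2\pi \eta_i}] = e^{-2\pi^2\sigma^2}$, giving $\expec[\norm{z-h}_2^2] = 2n(1 - e^{-2\pi^2\sigma^2})$.

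The final two-sided bound \eqref{eq:expec_zh_dist_bds} comes from applying the elementary inequalities $\tfrac{x}{2} \leq 1 - e^{-x} \leq x$ for $x \in [0,1]$ to $x = 2\pi^2 \sigma^2$, which lies in $[0,1]$ precisely under the assumption $\sigma \leq 1/(\pi\sqrt{2})$. I don't anticipate any obstacle: the whole proposition is a bookkeeping exercise once the Gaussian characteristic function is invoked, and the only mildly careful step is organizing parts \ref{item:noise_expec_iden_2} and \ref{item:noise_expec_iden_3} so that independence and mean-zero are used cleanly to eliminate off-diagonal and cross terms.
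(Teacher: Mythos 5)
Your proposal is correct and follows essentially the same route as the paper's proof in Appendix B: reduce to the Gaussian characteristic-function identity $\expec[e^{\iota 2\pi\eta}] = e^{-2\pi^2\sigma^2}$, compute diagonal second moments using $|h_i|=1$, use independence of the $\eta_i$ to kill cross terms, and deduce (iii), (iv), (v) from (i) and (ii) exactly as the paper does; the final inequality \eqref{eq:expec_zh_dist_bds} uses the same elementary bound $\frac{x}{2}\le 1-e^{-x}\le x$ on $[0,1]$. (If anything, your write-out of part (ii) is slightly cleaner, since the paper's displayed sum $\sum_i u_i^2\, \expec[|z_i-e^{-2\pi^2\sigma^2}h_i|^2]$ should read $\sum_i |u_i|^2\,\expec[|z_i-e^{-2\pi^2\sigma^2}h_i|^2]$ for complex $u$.)
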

\begin{proof}
See Appendix \ref{appsec:proof_prop_noise_expec}
\end{proof}
We will also require several concentration bounds in order to derive high probability error bounds later on in our analysis. 
\begin{proposition} \label{prop:conc_bounds}
Denote $z = [z_1,\dots, z_n]^T \in \calC_n$ with $z_i$ as defined in \eqref{eq:noise_mod}. For any given $U = [u_1 \cdots u_k] \in \matR^{n \times k}$ with orthonormal columns, the following holds true.
\begin{enumerate}[label=\upshape(\roman*)]
    \item\label{prop:conc_bds_1} With probability at least $1 - \frac{4}{n^2}$,  
    \begin{align*}
  & z^{*} UU^T z - k (1-\expn{4}) - \expn{4} h^* UU^T h \\
    &\geq -\left[ 4096 \log n +32\sqrt{6k}  (1-\expn{8}) \sqrt{\log n}     
     + 11 \log n \left(\norm{UU^T h}_{\infty} + \sqrt{1-\expn{8}} \norm{UU^T h}_2 \right) \right].
    \end{align*}

    \item\label{prop:conc_bds_2} With probability at least $1 - \frac{2}{n^2}$, 
       \begin{align*}
     (z-\expn{2} h)^* UU^T (z - \expn{2} h) \leq k(1 - \expn{4}) + 4096 \log n + 32\sqrt{6k}  (1 - \expn{8}) \sqrt{\log n}.   
    \end{align*} 
    
    \item\label{prop:conc_bds_3} With probability at least $1 - \frac{2}{n^2}$,
    \begin{equation*}
     \norm{z - h}_2^2 - 2n (1-\expn{2}) \leq  3 \log n \left(2 + \sqrt{4 + 9(1-\expn{8}) n} \right).  
    \end{equation*}
    
    \item \label{prop:conc_bds_4} With probability at least $1 - \frac{2}{n^2}$,
    
    \begin{equation*}
     \norm{z - \expn{2} h}_2^2 - n (1-\expn{4}) \leq  3 \log n \left(2 + \sqrt{4 + 9(1-\expn{8}) n} \right).  
    \end{equation*}
\end{enumerate}
In all the above inequalities, the event obtained by reversing both the inequality and the sign of the RHS term also holds with the stated probability of success.
\end{proposition}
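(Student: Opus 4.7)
My plan is to reparametrize via the independent unit-modulus random variables $\varphi_i := e^{\iota 2\pi \eta_i}$ (so $z_i = h_i \varphi_i$, $\expec[\varphi_i] = \expn{2}$, $\expec[\varphi_i^2] = \expn{8}$) and to introduce the zero-mean independent bounded sequence $w_i := z_i - \expn{2} h_i = h_i(\varphi_i - \expn{2})$ with $|w_i| \le 1+\expn{2} \le 2$. In each of the four parts the target quantity is a low-degree polynomial in these independent bounded variables, and the arguments reduce to Bernstein-type concentration.

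For \ref{prop:conc_bds_3} and \ref{prop:conc_bds_4}, I would observe that both quantities are sums of i.i.d.\ bounded real random variables: $\norm{z-h}_2^2 = \sum_i (2 - 2\cos(2\pi \eta_i))$, and analogously $\norm{z-\expn{2}h}_2^2 = \sum_i (1 + \expn{4} - 2\expn{2}\cos(2\pi \eta_i))$. Each summand lies in $[0,4]$, and a direct calculation shows its variance is bounded by a constant multiple of $1-\expn{8}$. Two-sided Bernstein with deviation level tuned so that the Gaussian-like exponent equals $2\log n$ then yields, after simplification, the exact expression $3\log n\,(2 + \sqrt{4 + 9(1-\expn{8}) n})$ appearing in the statement.

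For \ref{prop:conc_bds_2}, let $A := UU^T$ (so $\Tr A = \Tr A^2 = k$). I would split $w^* A w = \sum_i A_{ii}|w_i|^2 + \sum_{i\ne i'} A_{ii'} w_i \overline{w_{i'}}$. The diagonal piece is a sum of independent bounded random variables with total mean $k(1-\expn{4})$, and concentrates around this value via Bernstein. For the off-diagonal piece I would form the martingale difference sequence $Y_{i'} := 2\real\!\bigl(\overline{w_{i'}}\sum_{i<i'} A_{i' i}w_i\bigr)$ with respect to $\calF_{i'-1} := \sigma(w_1,\dots,w_{i'-1})$. Using $\expec|w_{i'}|^2 = 1-\expn{4}$ and $\expec w_{i'}^2 = h_{i'}^2(\expn{8}-\expn{4})$, a short calculation bounds the conditional variance of $Y_{i'}$ by $2(1-\expn{8})|\beta_{i'}|^2$ with $\beta_{i'} := \sum_{i<i'} A_{i'i} w_i$, and the sum of these conditional variances is controlled (in expectation) by $\Tr(A^2)(1-\expn{4})(1-\expn{8}) \asymp k(1-\expn{8})^2$. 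Freedman's martingale Bernstein inequality then produces the $\sqrt{k}\,(1-\expn{8})\sqrt{\log n}$ term, with the $4096\log n$ constant absorbing universal factors at failure probability $2/n^2$. For \ref{prop:conc_bds_1}, expanding $z = \expn{2} h + w$ gives $z^* A z = \expn{4} h^* A h + 2\expn{2}\real(h^* A w) + w^* A w$; the linear term $\real(h^* A w) = \real\!\bigl(\sum_i \overline{(Ah)_i}\, w_i\bigr)$ is a sum of independent bounded zero-mean variables with variance of order $\norm{Ah}_2^2(1-\expn{4})$ and per-term bound $2\norm{Ah}_\infty$, yielding via Bernstein the additional $\log n\,(\norm{Ah}_\infty + \sqrt{1-\expn{8}}\norm{Ah}_2)$ contribution; a union bound with \ref{prop:conc_bds_2} finishes.

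The main obstacle is obtaining the off-diagonal concentration in \ref{prop:conc_bds_2} with the correct $(1-\expn{8})^2$ variance scaling rather than a cruder $(1-\expn{4})$ factor; this requires keeping track of both $\expec|w_{i'}|^2$ and $\expec w_{i'}^2$ when computing the conditional second moment, together with a deterministic or high-probability bound on the running sum $\sum_{i'} |\beta_{i'}|^2$ to feed into Freedman. A fallback, if the martingale route becomes awkward, would be a Hanson-Wright inequality applied after decomposing $w$ into its independent real and imaginary components (bounded variables being sub-Gaussian), which yields the same scaling up to constants. The ``reversed inequality'' statements at the end follow in all four parts by applying the same arguments to the negated centered quantities and taking one further union bound.
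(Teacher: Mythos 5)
Your decomposition of $z^*UU^Tz$ into the quadratic, cross, and deterministic terms is exactly the paper's, as is the treatment of parts \ref{prop:conc_bds_3}--\ref{prop:conc_bds_4} via Bernstein's inequality on a sum of independent bounded scalars (the paper writes it as $-2\bigl((z_R-\zbar_R)^Th_R + (z_I-\zbar_I)^Th_I\bigr)$, which is the same sum you write as $\sum_i(2 - 2\cos(2\pi\eta_i))$ shifted by its mean). The linear cross term in part \ref{prop:conc_bds_1} is likewise handled by Bernstein in both accounts, with the same per-term bound $\norm{UU^Th}_\infty$ and variance of order $(1-\expn{8})\norm{UU^Th}_2^2$. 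So the overall architecture coincides with the paper.

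Where you diverge, and where your preferred route runs into real trouble, is the quadratic form $(z-\expn{2}h)^*UU^T(z-\expn{2}h)$ of part \ref{prop:conc_bds_2}. The paper does \emph{not} use Freedman or a diagonal/off-diagonal split; it observes that $UU^T$ is real, so the complex quadratic form splits exactly as $w_R^TUU^Tw_R + w_I^TUU^Tw_I$, each of which is a real quadratic form in a vector whose entries are independent across $i$ and bounded, and applies Bellec's \emph{Bernstein-type} Hanson--Wright inequality (the paper's Theorem~3) to each, which gives $K^2\norm{A}_2 x + K\norm{AD_\nu}_F\sqrt{x}$ with $\nu_i^2 \lesssim 1-\expn{8}$; this is precisely the source of the $(1-\expn{8})$ suppression. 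Two concrete issues with your plan. First, as you yourself flag, Freedman needs a high-probability (not merely in-expectation) bound on the accumulated conditional variance $\sum_{i'}\abs{\beta_{i'}}^2$, but this quantity is itself a quadratic form in the $w_i$'s of the same flavor as the object you are bounding; closing this requires a bootstrap or truncation step you do not supply, and which would be the bulk of the work. Second, the fallback as you phrase it --- ``Hanson--Wright\ldots (bounded variables being sub-Gaussian)'' --- would not deliver the right scaling: for a $[-2,2]$-bounded centered variable the sub-Gaussian norm is $\Theta(1)$ \emph{uniformly in} $\sigma$, so the classical sub-Gaussian Hanson--Wright gives a deviation term of order $\norm{A}_F\sqrt{\log n}\asymp\sqrt{k\log n}$ with no $(1-\expn{8})$ factor. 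That looser bound would be fatal downstream (Lemma~\ref{lem:lem:simp_conc} and the TRS analysis rely on the $\sigma^2$ scaling that $(1-\expn{8})\asymp\sigma^2$ provides). The fix is to use a \emph{variance-sensitive} Hanson--Wright --- Bellec's Theorem~3 in the paper, or any Bernstein-type matrix/quadratic-form inequality --- rather than the plain sub-Gaussian one; with that substitution your fallback route coincides with the paper's proof.

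One small clarification on the real/imaginary decomposition: $w_{R,i}$ and $w_{I,i}$ are \emph{not} independent of each other for fixed $i$ (both are functions of $\eta_i$), but that is harmless here because the split $w^*UU^Tw = w_R^TUU^Tw_R + w_I^TUU^Tw_I$ keeps them in separate quadratic forms, each involving only one of the two (independently-entried) real vectors; the paper handles them with two applications of Theorem~3 and a union bound.
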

\begin{proof}
See Appendix \ref{appsec:proof_prop_conc}.
\end{proof}
%
%
%-----------------------------------
% Entrywise projection proposition
%-----------------------------------
\paragraph{Entrywise projection on $\calC_n$.}
Lastly, we will make extensive use of the following useful result from \cite[Proposition 3.3]{Liu2017} concerning the projection operator in \eqref{eq:project_manifold}.
\begin{proposition}[\cite{Liu2017}] \label{prop:entry_proj}
For any $q \in [1,\infty]$, $w  \in \mathbb{C}^n$ and $g \in \calC_n$, we have
\begin{equation*}
    \norm{\mproj{w} - g}_q \leq 2 \norm{w - g}_q.
\end{equation*}
\end{proposition}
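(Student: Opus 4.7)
The plan is to prove the bound coordinatewise: I would show that for every index $i \in [n]$,
\[
\bigl|(\mproj{w})_i - g_i\bigr| \leq 2\,|w_i - g_i|,
\]
and then simply raise both sides to the $q$-th power (or take the max, when $q = \infty$) and sum to obtain the claimed $\ell_q$ inequality. The whole content of the statement therefore lives in the scalar inequality, so I would split into the two cases dictated by the definition of the projection in \eqref{eq:project_manifold}.

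In the easy case $w_i = 0$, I have $(\mproj{w})_i = 1$ while $|g_i| = 1$, so $|(\mproj{w})_i - g_i| \leq |1| + |g_i| = 2$; on the other hand $|w_i - g_i| = |g_i| = 1$, which immediately gives the required factor of $2$. In the main case $w_i \neq 0$, the scalar $w_i/|w_i|$ is the nearest point of the unit circle $\calC_1$ to $w_i$, so I would invoke the triangle inequality
\[
\Bigl|\tfrac{w_i}{|w_i|} - g_i\Bigr| \;\leq\; \Bigl|\tfrac{w_i}{|w_i|} - w_i\Bigr| + |w_i - g_i| \;=\; \bigl||w_i| - 1\bigr| + |w_i - g_i|,
\]
and then bound $\bigl||w_i| - 1\bigr| = \bigl||w_i| - |g_i|\bigr| \leq |w_i - g_i|$ by the reverse triangle inequality (using $|g_i|=1$). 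Adding the two contributions gives exactly the factor $2$.

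The only real subtlety I anticipate is the degenerate case $w_i = 0$, where the value assigned to $(\mproj{w})_i$ is a \emph{choice} (namely $1$) rather than a true projection; one has to check that the naive bound $|1 - g_i| \leq 2$ is still compatible with the factor $2$ on the right-hand side, and it is, precisely because $|w_i - g_i| = 1$ in that case. Once the pointwise bound is in hand, the passage to the $\ell_q$ norm is completely routine: for $1 \leq q < \infty$,
\[
\norm{\mproj{w} - g}_q^q = \sum_{i=1}^{n} \bigl|(\mproj{w})_i - g_i\bigr|^q \leq 2^q \sum_{i=1}^n |w_i - g_i|^q = 2^q \norm{w-g}_q^q,
\]
and the case $q = \infty$ follows by taking the maximum of the coordinatewise inequality. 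I do not expect any significant obstacle beyond organizing these two cases cleanly.
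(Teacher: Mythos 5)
Your proof is correct, and the coordinatewise reduction, triangle inequality, and reverse triangle inequality are exactly the right ingredients. Note that the paper itself does not prove this proposition; it simply cites \cite[Proposition 3.3]{Liu2017} and imports the result. Your argument supplies a self-contained proof that matches the standard derivation: for $w_i \neq 0$ you bound $\bigl|\tfrac{w_i}{|w_i|} - g_i\bigr| \leq \bigl||w_i|-1\bigr| + |w_i - g_i| \leq 2|w_i - g_i|$ using $|g_i| = 1$, you handle the degenerate $w_i = 0$ case separately (where the bound is tight with equality on the right only if $|1 - g_i| = 2$, but always holds since $|w_i - g_i| = 1$), and the passage from the pointwise bound to $\ell_q$ for $q \in [1,\infty]$ is immediate. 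No gaps.
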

It is especially important to note that for any number $t > 0$, we have $\mproj{w} = \mproj{t w}$. Hence from the above Proposition, it follows that $\norm{\mproj{w} - g}_q \leq 2 \norm{t w - g}_q$ for any $t > 0$. While it might be difficult to choose the optimal scale $t > 0$ that minimizes the above bound, one could still choose a good surrogate that leads to an improvement over the bound obtained when $t = 1$. 

% l_2 bound for unconstrained convex quadratic program
%------------------------------
% error bounds for UCQP
%------------------------------
\section{Error bounds for \eqref{prog:ucqp}} \label{sec:l2_UCQP}
We now analyse the quality of the solution $\gest = (I + \reg L)^{-1} z$ of \eqref{prog:ucqp}. To begin with, we have the following Lemma that bounds the error between $\frac{\gest}{\abs{\gest}}$ and $h$ \emph{for any} $z \in \calC_n$.
\begin{lemma} \label{lem:ucqp_lem_err_1}
For any $z \in \calC_n$ and $\lambbar \in [\lambmin, \lambda_1]$, it holds that 
\begin{equation*}
    \norm{\frac{\gest}{\abs{\gest}} - h}_2^2 \leq 8(E_1 + E_2)
\end{equation*}
where 
\begin{align*} 
E_1 &= \abs{\dotprod{q_n}{\expp{2}z - h}}^2 + \frac{1}{(1+\reg\lambmin)^2} \sum_{j \in \calL_{\lambbar}} \abs{\dotprod{q_j}{\expp{2}z-h}}^2 + \frac{\norm{\expp{2} z - h}_2^2}{(1+\reg\lambbar)^2}, \\
E_2 &= \frac{2 \reg^2 \triangle \smooth_n}{(1 + \reg \lambmin)^2}.
\end{align*}
\end{lemma}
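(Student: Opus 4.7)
The main leverage comes from the scale invariance noted right after Proposition~\ref{prop:entry_proj}: $\mproj{\gest} = \mproj{t\gest}$ for any $t > 0$, so we are free to pick the rescaling that best matches the error quantity we wish to control. Motivated by Proposition~\ref{prop:noise_expec_iden}\ref{item:noise_expec_iden_1} (which gives $\expec[\expp{2} z] = h$, so that $\expp{2} z - h$ is the ``centered'' noise) and by the shape of $E_1$, I would choose $t = \expp{2}$. Setting $w := \expp{2}\gest = (I + \reg L)^{-1}(\expp{2} z)$, Proposition~\ref{prop:entry_proj} (with $q = 2$) yields $\norm{\mproj{\gest} - h}_2^2 \leq 4 \norm{w - h}_2^2$.

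Next, I would perform a bias--variance style decomposition based on the identity $(I + \reg L)^{-1} - I = -\reg (I + \reg L)^{-1} L$, namely
\[
w - h = (I + \reg L)^{-1}(\expp{2} z - h) \,-\, \reg \,(I + \reg L)^{-1} L h.
\]
Applying $\norm{a+b}_2^2 \leq 2\norm{a}_2^2 + 2\norm{b}_2^2$ then gives
\[
\norm{\mproj{\gest} - h}_2^2 \leq 8 \norm{(I + \reg L)^{-1}(\expp{2} z - h)}_2^2 + 8 \reg^2 \norm{(I + \reg L)^{-1} L h}_2^2,
\]
so the task reduces to showing the first summand is at most $8E_1$ and the second at most $8E_2$.

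For the $E_1$ bound, I would diagonalize $L$ in its orthonormal eigenbasis $\{q_j\}_{j=1}^n$ and write, for $v := \expp{2} z - h$,
\[
\norm{(I + \reg L)^{-1} v}_2^2 \;=\; \sum_{j=1}^{n} \frac{\abs{\dotprod{q_j}{v}}^2}{(1 + \reg \lambda_j)^2}.
\]
I would then split the sum into three pieces: $j = n$ (where $\lambda_n = 0$, giving the bare $\abs{\dotprod{q_n}{v}}^2$ term), $j \in \calL_{\lambbar}$ (where $\lambda_j \geq \lambmin$ so the denominator is $\geq (1+\reg\lambmin)^2$), and $j \in \calH_{\lambbar}$ (where $\lambda_j \geq \lambbar$, after which I extend the sum back to all of $[n]$ and apply Parseval to bound it by $\norm{v}_2^2$). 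This reproduces $E_1$ verbatim.

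For the $E_2$ bound, I note that $Lq_n = 0$, so
\[
\norm{(I + \reg L)^{-1} L h}_2^2 = \sum_{j=1}^{n-1} \frac{\lambda_j^2 \abs{\dotprod{q_j}{h}}^2}{(1 + \reg \lambda_j)^2} \;\leq\; \frac{1}{(1 + \reg \lambmin)^2} \sum_{j=1}^{n-1} \lambda_j^2 \abs{\dotprod{q_j}{h}}^2 \;\leq\; \frac{2 \triangle \smooth_n}{(1 + \reg \lambmin)^2},
\]
where the last inequality is the smoothness consequence \eqref{eq:smooth_res2}. Multiplying by $\reg^2$ produces exactly $E_2$. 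No individual step is genuinely hard here; the only nontrivial judgement call is the choice of rescaling $t = \expp{2}$, which is essential for producing the mean-zero noise quantity $\expp{2} z - h$ inside $E_1$ and is what later enables the concentration arguments of Proposition~\ref{prop:conc_bounds} to kick in.
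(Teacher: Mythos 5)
Your proof is correct and follows essentially the same route as the paper: rescale by $\expp{2}$ so the noise term is mean-zero, split $\expp{2}\gest - h$ into the two terms $(I+\reg L)^{-1}(\expp{2}z - h)$ and $(I+\reg L)^{-1}h - h$, apply Proposition~\ref{prop:entry_proj}, and bound each piece in the eigenbasis of $L$. The only (cosmetic) difference is that you bound the $\calH_{\lambbar}$ sum by extending it to all of $[n]$ and invoking Parseval directly, whereas the paper rearranges so the discarded terms are manifestly nonnegative — both give exactly $E_1$.
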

\begin{proof}
From the definition in \eqref{eq:project_manifold}, observe that $\mproj{\gest} = \mproj{\expp{2} \gest}$ where we recall $\gest = (I + \reg L)^{-1} z$. We can write
\begin{equation*}
    \expp{2} \gest - h = \underbrace{(I + \reg L)^{-1} (\expp{2} z - h)}_{e_1} + \underbrace{(I + \reg L)^{-1}h - h}_{e_2}
\end{equation*}
which implies $\norm{\expp{2} \gest - h}_2^2 \leq 2(\norm{e_1}_2^2 + \norm{e_2}_2^2)$. Using Proposition \ref{prop:entry_proj}, we then obtain 
\begin{align*}
    \norm{\frac{\gest}{\abs{\gest}} - h}_2^2 = 
    \norm{\frac{\expp{2} \gest}{\abs{\expp{2} \gest}} - h}_2^2
    \leq 4 \norm{\expp{2} \gest - h}_2^2 \leq 8(\norm{e_1}_2^2 + \norm{e_2}_2^2).
\end{align*}
%
%
%------------------------------
% Bounding $\norm{e_2}_2^2$
% 
\paragraph{Bounding $\norm{e_2}_2^2$.} Using $h = \sum_{j=1}^n \dotprod{q_j}{h} q_j$, we can write $e_2$ as
\begin{equation*}
e_2 = (I + \reg L)^{-1}h - h = \sum_{j=1}^{n} \frac{\dotprod{q_j}{h}}{1+\reg \lambda_j} q_j - \sum_{j=1}^{n} \dotprod{q_j}{h} q_j =  \sum_{j=1}^{n-1} \left(\frac{-\reg \lambda_j}{1 + \reg \lambda_j} \right) \dotprod{q_j}{h} q_j.     
\end{equation*}
This leads to the bound (using orthonormality of $q_j$'s)
\begin{equation*}
 \norm{e_2}_2^2 
 =  \sum_{j=1}^{n-1} \left(\frac{\reg^2 \lambda_j^2}{(1 + \reg \lambda_j)^2} \right) \abs{\dotprod{h}{q_j}}^2  
 \leq \frac{ \reg^2}{(1 + \reg \lambmin)^2} \sum_{j=1}^{n-1} \lambda_j^2 \abs{\dotprod{h}{q_j}}^2 
 \leq \frac{2 \triangle \reg^2 \smooth_n}{(1 + \reg \lambmin)^2}
\end{equation*}
where the last inequality uses \eqref{eq:smooth_res2}.

%------------------------------
% Bounding $\norm{e_1}_2^2$
% 
\paragraph{Bounding $\norm{e_1}_2^2$.} By writing $e_1$ as
\begin{equation*}
    e_1 = (I + \reg L)^{-1} (\expp{2} z - h) = \dotprod{q_n}{\expp{2} z-h} q_n + \sum_{j=1}^{n-1} \frac{\dotprod{q_j}{\expp{2} z-h}}{1 + \reg \lambda_j} q_j
\end{equation*}
we obtain the bound
\begin{align*}
    \norm{e_1}_2^2 
    &= \abs{\dotprod{q_n}{\expp{2} z-h}}^2 + \sum_{j=1}^{n-1} \frac{\abs{\dotprod{q_j}{\expp{2} z-h}}^2}{(1 + \reg \lambda_j)^2} \\
    &\leq  \abs{\dotprod{q_n}{\expp{2} z-h}}^2 + \sum_{j \in \calL_{\lambbar}}  \frac{\abs{\dotprod{q_j}{\expp{2} z-h}}^2}{(1 + \reg \lambmin)^2} + 
    \sum_{j \in \calH_{\lambbar}}  \frac{\abs{\dotprod{q_j}{\expp{2} z-h}}^2}{(1 + \reg \lambbar)^2} \\
    &= \abs{\dotprod{q_n}{\expp{2} z-h}}^2 \left(1 - \frac{1}{(1 + \reg \lambbar)^2} \right) 
    + \left( \frac{1}{(1 + \reg \lambmin)^2} - \frac{1}{(1 + \reg \lambbar)^2} \right) \sum_{j \in \calL_{\lambbar}} \abs{\dotprod{q_j}{\expp{2} z-h}}^2 \\ 
    &+ \frac{\norm{\expp{2} z - h}_2^2}{(1+\reg\lambbar)^2} \\
    &\leq \abs{\dotprod{q_n}{\expp{2} z-h}}^2 +  \frac{1}{(1 + \reg \lambmin)^2} \sum_{j \in \calL_{\lambbar}} \abs{\dotprod{q_j}{\expp{2} z-h}}^2 + \frac{\norm{\expp{2} z - h}_2^2}{(1+\reg\lambbar)^2}
\end{align*}
where in the penultimate step, we used the identity 
$$\sum_{j \in \calH_{\lambbar}} \abs{\dotprod{q_j}{\expp{2} z-h}}^2 = \norm{\expp{2} z - h}_2^2 - \sum_{j \in \calL_{\lambbar} \cup \set{n}} \abs{\dotprod{q_j}{\expp{2} z-h}}^2.$$
\end{proof}
The quantity $\lambbar$ depends on the graph $G$, and as we will see shortly, we will like to choose $\lambbar$ such that (a) $\abs{\calL_{\lambbar}}$ is small and, (b) $\sum_{j \in \calL_{\lambbar} \cup \set{n}} \abs{\dotprod{h}{q_j}}^2 \approx \norm{h}_2^2 = n$.
%-------------------------------------
% Error bounds in expectation
%-------------------------------------
\subsection{Error bounds in expectation} \label{subsec:ucqp_err_bds_expec}
If $z \in \calC_n$ is generated randomly as in \eqref{eq:noise_mod}, we easily obtain from Lemma \ref{lem:ucqp_lem_err_1} a bound on the expected $\ell_2$ error.
\begin{lemma} \label{lem:ucqp_lem_err_2}
Let $z \in \calC_n$ be generated as in \eqref{eq:noise_mod} and assume $\sigma \leq \frac{1}{2\pi}$. Then for any given $\lambbar \in [\lambmin, \lambda_1]$, it holds that 
\begin{eqnarray} \label{eq:ucqp_bd_1}
    \expec \norm{\frac{\gest}{\abs{\gest}} - h}_2^2 
    \leq 
    \frac{16 \triangle \reg^2 \smooth_n}{(1 + \reg \lambmin)^2} 
    + 64\pi^2 \sigma^2 \left(1 +  \frac{\abs{\calL_{\lambbar}}}{(1 + \reg \lambmin)^2}  + \frac{n}{(1 + \reg \lambbar)^2}\right).   
\end{eqnarray}
In particular, if $\reg = (\frac{4\pi^2 \sigma^2 n}{\triangle \smooth_n \lambbar^2})^{1/4}$, we obtain the simplified bound
\begin{eqnarray} \label{eq:ucqp_sim_bd}
    \expec \norm{\frac{\gest}{\abs{\gest}} - h}_2^2 
    \leq  64\pi \left(\frac{\sigma}{\lambbar} \sqrt{\triangle \smooth_n n} + \pi \sigma^2(1 + \abs{\calL_{\lambbar}}) \right).
\end{eqnarray}
\end{lemma}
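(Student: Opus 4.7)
The plan is to take expectations on both sides of Lemma \ref{lem:ucqp_lem_err_1}. The term $E_2 = 2\reg^2\triangle\smooth_n/(1+\reg\lambmin)^2$ is deterministic and passes through unchanged, so after multiplication by $8$ it yields the first summand of \eqref{eq:ucqp_bd_1}. All the probabilistic work therefore concentrates on bounding $\expec E_1$.

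The key step for $E_1$ is the scaling identity $\expp{2}z - h = \expp{2}(z - \expn{2}h)$, which lets me rewrite every quantity in $E_1$ as a rescaled centred quantity. Concretely, $\abs{\dotprod{q_j}{\expp{2}z - h}}^2 = \expp{4}\abs{\dotprod{q_j}{z - \expn{2}h}}^2$, so Proposition \ref{prop:noise_expec_iden}\ref{item:noise_expec_iden_2} applied with the unit vector $u = q_j$ gives $\expec\abs{\dotprod{q_j}{\expp{2}z - h}}^2 = \expp{4}(1 - \expn{4}) = \expp{4} - 1$. The same rescaling together with item \ref{item:noise_expec_iden_4} yields $\expec\norm{\expp{2}z - h}_2^2 = n(\expp{4} - 1)$. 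Substituting these three expectations into the three summands defining $E_1$ and factoring gives
\[
\expec E_1 = (\expp{4} - 1)\left(1 + \frac{\abs{\calL_{\lambbar}}}{(1+\reg\lambmin)^2} + \frac{n}{(1+\reg\lambbar)^2}\right).
\]
To match the prefactor in \eqref{eq:ucqp_bd_1} I would then linearize the exponential using the hypothesis $\sigma \leq 1/(2\pi)$, which forces $4\pi^2\sigma^2 \leq 1$. Convexity of $e^x$ on $[0,1]$ gives $e^x - 1 \leq (e-1)x \leq 2x$ there, so $\expp{4} - 1 \leq 8\pi^2\sigma^2$; multiplying by $8$ recovers the prefactor $64\pi^2\sigma^2$ in \eqref{eq:ucqp_bd_1}.

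For the simplified bound \eqref{eq:ucqp_sim_bd}, I would substitute the prescribed $\reg = (4\pi^2\sigma^2 n/(\triangle\smooth_n\lambbar^2))^{1/4}$ into \eqref{eq:ucqp_bd_1}, use the crude bounds $(1+\reg\lambmin)^2 \geq 1$ in the first and middle terms, and $(1+\reg\lambbar)^2 \geq \reg^2\lambbar^2$ in the last. The prescribed $\reg$ is chosen precisely so that the two remaining $\reg$-dependent summands $16\reg^2\triangle\smooth_n$ and $64\pi^2\sigma^2 n/(\reg^2\lambbar^2)$ are exactly equal; a short calculation shows each equals $(32\pi\sigma/\lambbar)\sqrt{\triangle\smooth_n \, n}$, which produces the leading term of \eqref{eq:ucqp_sim_bd} and leaves $64\pi^2\sigma^2(1+\abs{\calL_{\lambbar}})$ as the remainder.

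The only genuine obstacle is the algebraic balancing at the end, which is pure bookkeeping once the form of $\reg$ is fixed. No additional probabilistic machinery is required beyond the second-moment identities already established in Proposition \ref{prop:noise_expec_iden}, and no concentration is needed because the bound is stated in expectation.
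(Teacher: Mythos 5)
Your proof is correct and follows essentially the same route as the paper: take expectations of Lemma \ref{lem:ucqp_lem_err_1}, evaluate $\expec E_1$ term by term via Proposition \ref{prop:noise_expec_iden} (the paper leaves the rescaling $\expp{2}z - h = \expp{2}(z - \expn{2}h)$ implicit, but it is the same computation), linearize $\expp{4}-1 \leq 8\pi^2\sigma^2$ using $4\pi^2\sigma^2 \leq 1$, and then choose $\reg$ to balance the two $\reg$-dependent terms, which the paper phrases equivalently as minimizing a convex function of $\reg$.
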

\begin{proof}
By taking the expectation of both sides of the inequality in Lemma \ref{lem:ucqp_lem_err_1}, and using Proposition \ref{prop:noise_expec_iden}, we obtain 
\begin{eqnarray*}
    \expec \norm{\frac{\gest}{\abs{\gest}} - h}_2^2 
    \leq   
    \frac{16 \triangle \reg^2 \smooth_n}{(1 + \reg \lambmin)^2} 
    + 8 (e^{4\pi^2 \sigma^2} - 1) \left(1 +  \frac{\abs{\calL_{\lambbar}}}{(1 + \reg \lambmin)^2}  + \frac{n}{(1 + \reg \lambbar)^2}\right). 
\end{eqnarray*}
The bound in \eqref{eq:ucqp_bd_1} now follows from the standard fact $e^{x} \leq 1 + 2x$ for any $x \leq 1$.

To obtain the bound in \eqref{eq:ucqp_sim_bd}, we first use $1 + \reg \lambmin \geq 1$ and $1 + \reg \lambbar \geq \reg \lambbar$ in \eqref{eq:ucqp_bd_1} and observe that  the  resulting  bound is a convex function of $\reg$ minimized for the stated value of $\reg$. Plugging this choice of $\reg$ in \eqref{eq:ucqp_bd_1} then yields \eqref{eq:ucqp_sim_bd}.
\end{proof}
As can be seen from \eqref{eq:ucqp_sim_bd}, there is a trade-off in the choice of $\lambbar$.  We are now ready to state our first main theorem which establishes conditions under which \eqref{prog:ucqp} provably denoises the input $z \in \calC_n$. It follows easily from Lemma \ref{lem:ucqp_lem_err_2} and Proposition \ref{prop:noise_expec_iden}\ref{item:noise_expec_iden_5}.
\begin{theorem} \label{thm:ucqp_denoise_expec}
Let $z \in \calC_n$ be generated as in \eqref{eq:noise_mod}, and $\varepsilon \in (0,1)$. For any given $\lambbar \in [\lambmin, \lambda_1]$
satisfying $1 + \abs{\calL_{\lambbar}} \leq \frac{\varepsilon n}{64}$, suppose that the noise level $\sigma$ satisfies
\begin{equation*}
    \frac{64}{\pi \varepsilon \lambbar} \sqrt{\frac{{\triangle \smooth_n}}{n}} \leq \sigma \leq \frac{1}{2 \pi}. 
\end{equation*}
Then for the choice $\reg = (\frac{4\pi^2 \sigma^2 n}{\triangle \smooth_n \lambbar^2})^{1/4}$, the solution $\gest$ of \eqref{prog:ucqp} satisfies
\begin{equation} \label{eq:ucqp_eps_den_bd}
\expec \norm{\frac{\gest}{\abs{\gest}} - h}_2^2 \leq \varepsilon \expec \norm{z - h}_2^2.    
\end{equation}
\end{theorem}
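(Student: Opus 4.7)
The plan is to combine the simplified error bound \eqref{eq:ucqp_sim_bd} from Lemma \ref{lem:ucqp_lem_err_2} with the lower bound on $\expec\norm{z-h}_2^2$ given in \eqref{eq:expec_zh_dist_bds} from Proposition \ref{prop:noise_expec_iden}\ref{item:noise_expec_iden_5}, and then verify that the stated hypotheses on $\sigma$ and $\lambbar$ are exactly what is needed so that the former is at most $\varepsilon$ times the latter.

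First I would note that since $\sigma \leq \frac{1}{2\pi} \leq \frac{1}{\pi\sqrt{2}}$, the hypotheses of both Lemma \ref{lem:ucqp_lem_err_2} and the lower bound in \eqref{eq:expec_zh_dist_bds} are met. With the stated choice of $\reg$, Lemma \ref{lem:ucqp_lem_err_2} gives
\begin{equation*}
\expec \norm{\tfrac{\gest}{\abs{\gest}} - h}_2^2 \leq 64\pi \,\frac{\sigma}{\lambbar}\sqrt{\triangle \smooth_n n} \;+\; 64\pi^2 \sigma^2 (1 + \abs{\calL_{\lambbar}}),
\end{equation*}
while \eqref{eq:expec_zh_dist_bds} yields $\expec\norm{z-h}_2^2 \geq 2\pi^2 \sigma^2 n$. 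Therefore it suffices to show that each of the two summands on the right-hand side above is bounded by $\tfrac{\varepsilon}{2}\cdot 2\pi^2 \sigma^2 n = \varepsilon \pi^2 \sigma^2 n$.

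The second term handles itself: the assumption $1 + \abs{\calL_{\lambbar}} \leq \tfrac{\varepsilon n}{64}$ gives directly $64 \pi^2 \sigma^2 (1+\abs{\calL_{\lambbar}}) \leq \varepsilon \pi^2 \sigma^2 n$. For the first term, requiring $64\pi \tfrac{\sigma}{\lambbar}\sqrt{\triangle \smooth_n n} \leq \varepsilon \pi^2 \sigma^2 n$ rearranges (after dividing by $\sigma$) precisely to the lower bound $\sigma \geq \tfrac{64}{\pi \varepsilon \lambbar}\sqrt{\tfrac{\triangle \smooth_n}{n}}$ assumed in the statement. Summing the two bounds and comparing with the lower bound on $\expec\norm{z-h}_2^2$ then yields \eqref{eq:ucqp_eps_den_bd}.

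There is no real obstacle here since all the heavy lifting (bounding $\norm{e_1}_2^2$ and $\norm{e_2}_2^2$, optimizing over $\reg$, and passing to expectations via Proposition \ref{prop:noise_expec_iden}) has already been done in Lemma \ref{lem:ucqp_lem_err_2}; the theorem is essentially an unpacking of the two terms in \eqref{eq:ucqp_sim_bd} and matching them to the two hypotheses. If one wanted a cleaner presentation, one could equivalently split the tolerated error budget as $\alpha \varepsilon \pi^2 \sigma^2 n$ and $(1-\alpha)\varepsilon \pi^2 \sigma^2 n$ for some $\alpha \in (0,1)$, but the symmetric split $\alpha = 1/2$ already matches the stated constants $64$ in the hypotheses, so no further optimization is needed.
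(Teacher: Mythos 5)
Your proposal is correct and follows the same path as the paper: combine the simplified expectation bound \eqref{eq:ucqp_sim_bd} from Lemma \ref{lem:ucqp_lem_err_2} with the lower bound $\expec\norm{z-h}_2^2 \geq 2\pi^2\sigma^2 n$ from \eqref{eq:expec_zh_dist_bds}, then split the allowed error budget evenly between the two terms, which recovers the two stated hypotheses with the same constant $64$. The paper simply divides the master inequality by $2\pi\sigma$ before splitting rather than splitting first, which is a cosmetic rearrangement of the same arithmetic.
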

 \begin{proof}
Recall from \eqref{eq:expec_zh_dist_bds} that if $\sigma \leq \frac{1}{\pi \sqrt{2}}$, then $\expec\norm{z - h}_2^2 \geq 2\pi^2 \sigma^2 n$ holds. Then using \eqref{eq:ucqp_sim_bd} from Lemma \ref{lem:ucqp_lem_err_2}, we see that if $\sigma \leq \frac{1}{2\pi}$, then for any $\varepsilon \in (0,1)$, the bound in \eqref{eq:ucqp_eps_den_bd} is ensured provided 
\begin{equation} \label{ucqp_mainthm_tmp1}
32 \left(\frac{\sqrt{\triangle \smooth_n n}}{\lambbar}  + \pi \sigma(1 + \abs{\calL_{\lambbar}}) \right) \leq \pi \varepsilon \sigma n.   
\end{equation}
Finally, \eqref{ucqp_mainthm_tmp1} is clearly ensured provided 
$1 + \abs{\calL_{\lambbar}} \leq \frac{\varepsilon n}{64}$ and  
 \begin{equation*}
     32 \frac{\sqrt{\triangle \smooth_n n}}{\lambbar} \leq \frac{\sigma \varepsilon n}{2} \iff \frac{64}{\varepsilon \lambbar} \sqrt{\frac{{\triangle \smooth_n}}{n}} \leq 
    \sigma
 \end{equation*}
 which completes the proof.
\end{proof}
The following remarks are in order.
\begin{enumerate}[label=\upshape(\roman*)]
    \item The condition $1 + \abs{\calL_{\lambbar}} \lesssim \varepsilon n$ in Theorem \ref{thm:ucqp_denoise_expec} implies that we require $\abs{\calL_{\lambbar}} = o(n)$ as $n$ increases, for fixed $\varepsilon$. In other words, $\lambbar$ should not be chosen to be too large.
    
    \item The Theorem requires that the noise level lies in the regime $\frac{1}{\varepsilon \lambbar} \sqrt{\frac{{\triangle \smooth_n}}{n}} \lesssim \sigma \lesssim 1$ for denoising. The lower bound therein is likely due to an artefact of the analysis, and arises from the first error term in \eqref{eq:ucqp_sim_bd} which scales as $\frac{\sigma}{\lambbar} \sqrt{\triangle \smooth_n n}$. Nevertheless, if  
    \begin{equation*}
        \frac{1}{\lambbar} \sqrt{\frac{{\triangle \smooth_n}}{n}} = o(1) \quad \text{ as } n \rightarrow \infty,
    \end{equation*}
    then the condition on $\sigma$ weakens considerably as $n$ increases.
\end{enumerate}

It is interesting to translate the conditions of Theorem \ref{thm:ucqp_denoise_expec} for special choices of $G$, namely $K_n$ (complete graph), $S_n$ (star graph) and $P_n$ (path graph). This leads to the following Corollary. 
\begin{corollary} \label{cor:ucqp_den_expec}
 Let $z \in \calC_n$ be generated as in \eqref{eq:noise_mod}, and $\varepsilon \in (0,1)$. 
 \begin{enumerate}[label=\upshape(\roman*)]
     \item\label{cor:ucqp_den_eps_Kn} {($G = K_n$)} Suppose $n \gtrsim \frac{1}{\varepsilon}$ and $\frac{1}{n \varepsilon} \sqrt{\smooth_n} \lesssim \sigma \lesssim 1$.  If $\reg \asymp (\frac{\sigma^2}{n^2 \smooth_n })^{1/4}$, then the solution $\gest$ of \eqref{prog:ucqp} satisfies \eqref{eq:ucqp_eps_den_bd}.    
     
     \item\label{cor:ucqp_den_eps_Sn} {($G = S_n$)} Suppose $n \gtrsim \frac{1}{\varepsilon}$ and $\frac{1}{\varepsilon} \sqrt{\smooth_n} \lesssim \sigma \lesssim 1$.  If $\reg \asymp (\frac{\sigma^2}{\smooth_n })^{1/4}$, then the solution $\gest$ of \eqref{prog:ucqp} satisfies \eqref{eq:ucqp_eps_den_bd}.
     
     \item\label{cor:ucqp_den_eps_Pn} {($G = P_n$)} For a given $\theta \in [0,1)$, suppose $n \gtrsim (\frac{1}{\varepsilon})^{\frac{1}{1-\theta}}$ and $\frac{n^{\frac{3}{2} -2\theta}}{\varepsilon} \sqrt{\smooth_n} \lesssim \sigma \lesssim 1$.  If $\reg \asymp (\frac{\sigma^2 n^{5-4\theta}}{\smooth_n })^{1/4}$, then the solution $\gest$ of \eqref{prog:ucqp} satisfies \eqref{eq:ucqp_eps_den_bd}. 
     
 \end{enumerate}
\end{corollary}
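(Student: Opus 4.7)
The plan is to invoke Theorem \ref{thm:ucqp_denoise_expec} separately for each of the three graphs, in each case identifying $\triangle$ and the Laplacian spectrum, then choosing a suitable cut-off $\lambbar \in [\lambmin, \lambda_1]$ so that both hypotheses of the theorem---namely $1 + \abs{\calL_{\lambbar}} \lesssim \varepsilon n$ and $\frac{1}{\varepsilon \lambbar}\sqrt{\triangle \smooth_n / n} \lesssim \sigma$---are satisfied. The stated $\reg$ then follows directly by substituting these quantities into the formula $\reg \asymp (\sigma^2 n/(\triangle \smooth_n \lambbar^2))^{1/4}$ from Theorem \ref{thm:ucqp_denoise_expec}; the upper bound $\sigma \lesssim 1$ comes verbatim from the hypothesis $\sigma \leq 1/(2\pi)$ in that theorem.

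For $K_n$, the Laplacian spectrum consists of $0$ (with multiplicity one) and $n$ (with multiplicity $n-1$), so $\triangle = n-1 \asymp n$, $\lambmin = \lambda_1 = n$, and the only admissible choice is $\lambbar = n$, yielding $\calL_{\lambbar} = \emptyset$. The two hypotheses reduce respectively to $n \gtrsim 1/\varepsilon$ and $\frac{1}{n\varepsilon}\sqrt{\smooth_n} \lesssim \sigma$, and the formula gives $\reg \asymp (\sigma^2/(n^2 \smooth_n))^{1/4}$. For $S_n$, the spectrum is $\{0,1,\dots,1,n\}$ with $1$ of multiplicity $n-2$, so $\triangle = n-1$ and $\lambmin = 1$. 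Any $\lambbar \in (1, n]$ would force $\calL_{\lambbar}$ to contain all $n-2$ eigenvalues equal to $1$, incompatible with $\abs{\calL_{\lambbar}} \lesssim \varepsilon n$ for a fixed $\varepsilon \in (0,1)$; thus the only workable choice is $\lambbar = 1$, and $\calL_{\lambbar} = \emptyset$ by the strict inequality in its definition. This yields the conditions $n \gtrsim 1/\varepsilon$, $\frac{\sqrt{\smooth_n}}{\varepsilon} \lesssim \sigma$, and $\reg \asymp (\sigma^2/\smooth_n)^{1/4}$.

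The case $P_n$ is the most delicate and gives rise to the parameter $\theta$. The path-graph Laplacian eigenvalues are $\mu_k = 2(1 - \cos(k\pi/n))$ for $k = 0, 1, \dots, n-1$, so $\triangle = 2$, $\lambmin \asymp 1/n^2$, and $\lambda_1 \asymp 1$. Using $1 - \cos x \asymp x^2$ for $x \in [0, \pi/2]$, we have $\mu_k \asymp k^2/n^2$ for $k \leq n/2$. I would parametrize $\lambbar \asymp n^{-2(1-\theta)}$ with $\theta \in [0,1)$; this lies in $[\lambmin, \lambda_1]$, and the number of indices $k \geq 1$ with $\mu_k < \lambbar$ is $\abs{\calL_{\lambbar}} \asymp n^{\theta}$, since $k^2/n^2 \lesssim n^{-2(1-\theta)}$ iff $k \lesssim n^{\theta}$. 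The hypothesis $1 + \abs{\calL_{\lambbar}} \lesssim \varepsilon n$ then becomes $n^{\theta} \lesssim \varepsilon n$, i.e., $n \gtrsim (1/\varepsilon)^{1/(1-\theta)}$; the noise bound becomes $\frac{n^{3/2 - 2\theta}}{\varepsilon}\sqrt{\smooth_n} \lesssim \sigma$; and substituting into the formula for $\reg$ gives $\reg \asymp (\sigma^2 n^{5 - 4\theta}/\smooth_n)^{1/4}$.

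The main subtlety is the eigenvalue-counting step for $P_n$: one must verify that the approximation $\mu_k \asymp k^2/n^2$ remains valid up to $k \lesssim n^{\theta}$ (which is $o(n)$ precisely when $\theta < 1$) and simultaneously that the cut-off $\lambbar = n^{-2(1-\theta)}$ does not exceed $\lambda_1 \asymp 1$ (again requiring $\theta \leq 1$). The other two cases are essentially bookkeeping once the Laplacian spectrum has been written down.
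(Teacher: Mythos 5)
Your proposal is correct and follows essentially the same route as the paper: for each of $K_n$, $S_n$, $P_n$, write down $\triangle$ and the Laplacian spectrum, pick $\lambbar$ so that $\calL_{\lambbar}$ is empty (for $K_n$, $S_n$) or of size $\asymp n^{\theta}$ (for $P_n$, via $\lambbar \asymp n^{-2(1-\theta)}$), and substitute into the hypotheses of Theorem \ref{thm:ucqp_denoise_expec}. The extra remarks you include---that $\lambbar = 1$ is forced for $S_n$, and that the $k^2/n^2$ asymptotic holds in the relevant range $k \lesssim n^{\theta}$---are valid observations that the paper leaves implicit.
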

\begin{proof}
The spectra of $L$ for $K_n$, $S_n$ and $P_n$ are well known, see for e.g. \cite[Chapter 1]{brouwer12}.
\begin{enumerate}[label=\upshape(\roman*)]
    \item Here, $\triangle = n-1$ and $\lambda_{n-1} = \cdots = \lambda_1 = n$. Choose $\lambbar = n$ so that $\calL_{\lambbar} = \emptyset$.
    
    \item Here, $\triangle = n-1$ while $\lambda_{n-1} = \cdots = \lambda_2 = 1$ and $\lambda_1 = n$. Choose $\lambbar = 1$ so that $\calL_{\lambbar} = \emptyset$.
    
    \item Here, $\lambda_j = 4(\sin^2[\frac{\pi}{2n} (n-j)])$ for $j=1,\dots,n$; hence 
    $$ \lambda_1 \asymp 1 \quad \text{ and } \quad  \lambmin \asymp \sin^2\left(\frac{\pi}{2n}\right) \asymp \frac{1}{n^2}.$$  
    
    Choosing $\lambbar \asymp \frac{1}{n^{2(1-\theta)}}$ for $\theta \in [0,1)$, it is not difficult to see that $\abs{\calL_{\lambbar}} \lesssim n^{\theta}$, and so, $1 + \abs{\calL_{\lambbar}} \lesssim \varepsilon n$ provided $n \gtrsim (\frac{1}{\varepsilon})^{\frac{1}{1-\theta}}$. The conditions on $\sigma$ and $\reg$ follow easily using $\triangle = 2$.
\end{enumerate}
\end{proof}
\begin{remark} \label{rem:cor_ex_graph_expec_ucqp}
For the graphs in Corollary \ref{cor:ucqp_den_expec}, we can deduce conditions on the smoothness term $\smooth_n$ which lead to a non-vacuous regime for $\sigma$, of the form $o(1) \leq \sigma \lesssim 1$. These are detailed below when $n \rightarrow \infty$.
\begin{enumerate}
    \item {($G = K_n$)} If $\varepsilon$ is fixed, then $\smooth_n = o(n^2)$ suffices. On the other hand, if $\varepsilon = \varepsilon_n \rightarrow 0$ (as $n \rightarrow \infty$) and $\varepsilon_n = \omega(\frac{1}{n})$, then $\smooth_n = o(\varepsilon_n^2 n^2)$ is sufficient. 
    
    \item {($G = S_n$)} For $\varepsilon$ fixed, it suffices that $\smooth_n = o(1)$, while if $\varepsilon_n \rightarrow 0$ and $\varepsilon_n = \omega(\frac{1}{n})$, then $\smooth_n = o(\varepsilon_n^2)$ is sufficient.
    
    \item {($G = P_n$)} For fixed $\varepsilon$, it is sufficient that $\smooth_n = o(n^{4\theta-3})$, while the condition $\smooth_n = o(\varepsilon_n^2 n^{4\theta-3})$ suffices if $\varepsilon_n \rightarrow 0$ and $\varepsilon_n = \omega(\frac{1}{n^{1-\theta}})$. 
\end{enumerate}
\end{remark}
%
%

%----------------------------------------
% High probability error bounds
%---------------------------------------- 
\subsection{High probability error bounds}  
We now proceed to derive high probability bounds on the estimation error when $z \in \calC_n$ is generated randomly as in \eqref{eq:noise_mod}.
We begin with a simplification of some of the bounds stated in Proposition \ref{prop:conc_bounds} that we will need in our analysis.
\begin{lemma} \label{lem:lem:simp_conc}
For any given $\lambbar \in [\lambmin, \lambda_1]$, let $U$ denote the $n \times (1 + \abs{\calL_{\lambbar}})$ matrix consisting of $q_j$ for $n - 1 - \abs{\calL_{\lambbar}} \leq j \leq n$. Assuming $z \in \calC_n$ is generated randomly as in \eqref{eq:noise_mod}, denote $\zbar = \expec[z] = \expn{2} h$. Then the following is true.
\begin{enumerate}[label=\upshape(\roman*)]
\item \label{lem:simp_conc_item1} If $\sigma \leq \frac{1}{2\sqrt{2} \pi}$, then $$\prob\left((z-\zbar)^{*} U U^T (z-\zbar) \leq 6190  \sigma^2 \left(\sqrt{(1 + \abs{\calL_{\lambbar}}) \log n} + 1 + \abs{\calL_{\lambbar}} \right) + 4096 \log n \right) \geq 1 - \frac{2}{n^2}.$$

\item \label{lem:simp_conc_item2} If $\frac{72\log n}{\pi\sqrt{n}} \leq \sigma \leq \frac{1}{2\sqrt{2} \pi}$, then $\prob(\norm{z-\zbar}_2^2 \leq 5\pi^2 \sigma^2 n) \geq 1 - \frac{2}{n^2}$.

\item \label{lem:simp_conc_item3} If $\frac{72\log n}{\pi\sqrt{n}} \leq \sigma \leq \frac{1}{2\sqrt{2} \pi}$, then $\prob(\norm{z-h}_2^2 \geq \pi^2 \sigma^2 n) \geq 1 - \frac{2}{n^2}$.
\end{enumerate}
\end{lemma}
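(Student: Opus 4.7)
The plan is to obtain each of the three items as a direct specialization of the corresponding inequality in Proposition \ref{prop:conc_bounds}, simplifying using the noise regime constraint $\sigma \leq \tfrac{1}{2\sqrt{2}\pi}$ (which makes $4\pi^2\sigma^2, 8\pi^2\sigma^2 \leq 1$, so that $1-e^{-x} \leq x$ kicks in) together with, for items (ii)--(iii), the lower threshold $\sigma \geq \tfrac{72 \log n}{\pi\sqrt{n}}$ used to absorb logarithmic residuals.

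For part \ref{lem:simp_conc_item1}, I would apply Proposition \ref{prop:conc_bounds}\ref{prop:conc_bds_2} with $k := 1 + \abs{\calL_{\lambbar}}$ equal to the number of columns of $U$ (so $U U^T$ projects onto the span of $q_j$, $n-1-\abs{\calL_{\lambbar}} \leq j \leq n$). Under $\sigma \leq \tfrac{1}{2\sqrt{2}\pi}$, the factors $1 - e^{-4\pi^2\sigma^2}$ and $1-e^{-8\pi^2\sigma^2}$ are bounded by $4\pi^2\sigma^2$ and $8\pi^2\sigma^2$ respectively. Plugging in and rewriting gives terms proportional to $\sigma^2(1+\abs{\calL_{\lambbar}})$ and $\sigma^2\sqrt{(1+\abs{\calL_{\lambbar}})\log n}$, with numerical constants $4\pi^2$ and $256\pi^2\sqrt{6}$; both are bounded by $6190$, which matches the stated constant.

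For part \ref{lem:simp_conc_item2}, I apply Proposition \ref{prop:conc_bounds}\ref{prop:conc_bds_4}. The leading term $n(1-e^{-4\pi^2\sigma^2})$ is at most $4\pi^2\sigma^2 n$. For the additive residual, I use $\sqrt{4 + 9(1-e^{-8\pi^2\sigma^2})n} \leq 2 + 3\sqrt{8\pi^2\sigma^2 n} = 2 + 6\sqrt{2}\pi\sigma\sqrt{n}$, which produces a residual bounded by $12\log n + 18\sqrt{2}\pi\sigma\sqrt{n}\log n$. The hypothesis $\sigma \geq \tfrac{72\log n}{\pi\sqrt{n}}$ now rephrases as $\log n \leq \tfrac{\pi\sigma\sqrt{n}}{72}$: applying it once bounds the $\sqrt{n}\log n$ term by a small fraction of $\pi^2\sigma^2 n$, and squaring it handles the $12\log n$ term. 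Summing then gives a total bound of $4\pi^2\sigma^2 n + \pi^2 \sigma^2 n = 5\pi^2 \sigma^2 n$. For part \ref{lem:simp_conc_item3}, I use the reverse (lower-tail) form of Proposition \ref{prop:conc_bounds}\ref{prop:conc_bds_3}: the leading term $2n(1-e^{-2\pi^2\sigma^2})$ is at least $2\pi^2\sigma^2 n$ via $1-e^{-x} \geq x/2$ on $[0,1]$, and the same residual estimate as in (ii) is subtracted and again absorbed into $\pi^2\sigma^2 n$ using $\sigma \geq \tfrac{72\log n}{\pi\sqrt{n}}$, leaving a lower bound of $\pi^2\sigma^2 n$.

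The only real obstacle is arithmetic bookkeeping: checking that the threshold $\tfrac{72\log n}{\pi\sqrt{n}}$ is precisely strong enough for the linearizations $1-e^{-x} \in [x/2, x]$ and the square-root bound on $\sqrt{4 + 9(\cdot)n}$ to fold the logarithmic residuals into $\pi^2\sigma^2 n$ with the stated constants $5\pi^2$ (upper) and $\pi^2$ (lower); no new probabilistic argument is required beyond Proposition \ref{prop:conc_bounds}, and each failure event has probability $\leq 2/n^2$, matching the conclusion.
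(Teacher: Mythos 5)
Your proof is correct and follows essentially the same route as the paper's: each item is obtained by specializing the corresponding bound in Proposition \ref{prop:conc_bounds}, using the linearizations $1-e^{-x} \in [x/2,x]$ (valid once $8\pi^2\sigma^2 \le 1$) and absorbing the logarithmic residuals into $\pi^2\sigma^2 n$ via the threshold $\sigma \ge \tfrac{72\log n}{\pi\sqrt n}$. The only cosmetic difference is that you split $\sqrt{4+9(1-e^{-8\pi^2\sigma^2})n}$ with $\sqrt{a+b}\le\sqrt a+\sqrt b$, while the paper folds the additive $4$ directly into the $\sigma\sqrt n$ term using the lower bound on $\sigma$; both arrive at the stated constants.
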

\begin{proof}
Recall that for $x \in [0,1]$, we have $x/2 \leq 1 - e^{-x} \leq x$. Therefore if $8\pi^2 \sigma^2 \leq 1$, then 
\begin{equation} \label{eq:useful_bds1}
    1-\expn{8} \in [4 \pi^2 \sigma^2 , 8\pi^2 \sigma^2], \ 1-\expn{4} \in [2 \pi^2 \sigma^2, 4\pi^2 \sigma^2], \ 1-\expn{2} \in [\pi^2 \sigma^2, 2\pi^2 \sigma^2]. 
\end{equation}
\paragraph{Proof of \ref{lem:simp_conc_item1}.} 
Using \eqref{eq:useful_bds1} in Proposition \ref{prop:conc_bounds} \ref{prop:conc_bds_2} with $k = 1+\abs{\calL_{\lambbar}}$ readily yields the statement.
\paragraph{Proof of \ref{lem:simp_conc_item2}.} 
Using \eqref{eq:useful_bds1} in Proposition \ref{prop:conc_bounds} \ref{prop:conc_bds_4}, we obtain w.p at least $1-\frac{2}{n^2}$ the bound
\begin{equation} \label{eq:simp_conc_pt2_tmp}
\norm{z-\zbar}_2^2 \leq 4\pi^2 \sigma^2 n + 3\log n(2+\sqrt{4+72\pi^2 \sigma^2 n})    
\end{equation}
We now simplify the RHS of \eqref{eq:simp_conc_pt2_tmp} by noting that if $\sigma \geq \frac{1}{6\pi \sqrt{n}}$ and $n \geq 2$, then
\begin{equation} \label{eq:simp_conc_pt2_tmp1}
    2+\sqrt{4+72\pi^2 \sigma^2 n} \leq 2 + 12\pi \sigma \sqrt{n} \leq 24 \pi \sigma \sqrt{n}.
\end{equation}
Applying \eqref{eq:simp_conc_pt2_tmp1} in \eqref{eq:simp_conc_pt2_tmp}, we obtain 
\begin{equation*}
    \norm{z-\zbar}_2^2 \leq 4\pi^2 \sigma^2 n + 72\pi\sigma \sqrt{n} \log n \leq 5\pi^2 \sigma^2 n
\end{equation*}
provided $\sigma \geq \frac{72 \log n}{\pi \sqrt{n}}$. 

\paragraph{Proof of \ref{lem:simp_conc_item3}.}
Using \eqref{eq:useful_bds1} in Proposition \ref{prop:conc_bounds} \ref{prop:conc_bds_3}, we obtain w.p at least $1-\frac{2}{n^2}$ the bound
\begin{equation*}  
\norm{z-h}_2^2 \geq 2\pi^2 \sigma^2 n - 3\log n(2+\sqrt{4+72\pi^2 \sigma^2 n}) \geq 2\pi^2 \sigma^2 n - 72\pi\sigma \sqrt{n} \log n     
\end{equation*}
provided $\sigma \geq \frac{1}{6\pi \sqrt{n}}$ and $n \geq 2$ (using \eqref{eq:simp_conc_pt2_tmp1}). The condition $\sigma \geq \frac{72 \log n}{\pi \sqrt{n}}$ then implies $\norm{z-h}_2^2 \geq \pi^2 \sigma^2 n$.
\end{proof}
We will now use Lemma \ref{lem:lem:simp_conc} and Lemma \ref{lem:ucqp_lem_err_1} to obtain a high probability error bound on $\norm{\frac{\gest}{\abs{\gest}} - h}_2$. 
%
%
%----------------------------------------
% High probability error bound for UCQP
% 
%
\begin{theorem} \label{thm:ucqp_prob_bd}
Let $z \in \calC_n$ be generated as in \eqref{eq:noise_mod}, and assume that $\frac{72\log n}{\pi\sqrt{n}} \leq \sigma \leq \frac{1}{2\sqrt{2} \pi}$.
For any given $\lambbar \in [\lambmin, \lambda_1]$, if $\reg = (\frac{4\pi^2 \sigma^2 n}{\triangle \smooth_n \lambbar^2})^{1/4}$, then with probability at least $1 - \frac{4}{n^2}$, the solution $\gest \in \mathbb{C}^n$ of \eqref{prog:ucqp} satisfies 
\begin{equation} \label{eq:ucqp_prob_err_bd}
    \norm{\frac{\gest}{\abs{\gest}} - h}_2^2 \leq 72\pi \frac{\sigma \sqrt{\triangle \smooth_n n}}{\lambbar}  + 99040 \sigma^2 \left(1+\abs{\calL_{\lambbar}} + \sqrt{(1 + \abs{\calL_{\lambbar}}) \log n} \right) + 65536 \log n .
\end{equation}
\end{theorem}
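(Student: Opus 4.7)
The plan is to combine the deterministic bound of Lemma \ref{lem:ucqp_lem_err_1} with the high-probability tail bounds in Lemma \ref{lem:lem:simp_conc}, after rewriting the quantity $\expp{2}z - h$ in terms of the centered noise vector $z - \zbar$. Observe that since $\zbar := \expec[z] = \expn{2}h$ by Proposition \ref{prop:noise_expec_iden}\ref{item:noise_expec_iden_1}, we have the identity
\begin{equation*}
\expp{2}z - h = \expp{2}(z - \zbar),
\end{equation*}
so that for any vector $u \in \mathbb{C}^n$, $\abs{\dotprod{u}{\expp{2}z - h}}^2 = \expp{4} \abs{\dotprod{u}{z-\zbar}}^2$, and likewise $\norm{\expp{2}z - h}_2^2 = \expp{4}\norm{z-\zbar}_2^2$. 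The point of this rewriting is that the high-probability bounds in Lemma \ref{lem:lem:simp_conc} are expressed in terms of $z - \zbar$.

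Starting from Lemma \ref{lem:ucqp_lem_err_1}, the term $E_2$ is deterministic; bounding $(1+\reg\lambmin)^2 \geq 1$ yields $8 E_2 \leq 16\reg^2 \triangle \smooth_n$. For $E_1$, I would first bound $1/(1+\reg\lambmin)^2 \leq 1$ on the middle sum and group it with the $q_n$ term, giving
\begin{equation*}
\abs{\dotprod{q_n}{\expp{2}z-h}}^2 + \frac{1}{(1+\reg\lambmin)^2}\sum_{j\in\calL_{\lambbar}} \abs{\dotprod{q_j}{\expp{2}z-h}}^2 \leq \expp{4} (z-\zbar)^* UU^T (z-\zbar),
\end{equation*}
where $U$ is the matrix whose columns are $q_n$ together with $\set{q_j : j \in \calL_{\lambbar}}$ (this is precisely the matrix appearing in Lemma \ref{lem:lem:simp_conc}). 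Lemma \ref{lem:lem:simp_conc}\ref{lem:simp_conc_item1} then controls this quadratic form. For the last term of $E_1$, I would use $(1+\reg\lambbar)^2 \geq \reg^2\lambbar^2$ to write $\frac{\norm{\expp{2}z-h}_2^2}{(1+\reg\lambbar)^2} \leq \frac{\expp{4}\norm{z-\zbar}_2^2}{\reg^2\lambbar^2}$, and then apply Lemma \ref{lem:lem:simp_conc}\ref{lem:simp_conc_item2}.

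Next I would substitute $\reg = (\tfrac{4\pi^2\sigma^2 n}{\triangle\smooth_n\lambbar^2})^{1/4}$. Direct computation gives $\reg^2\triangle\smooth_n = \tfrac{2\pi\sigma\sqrt{\triangle\smooth_n n}}{\lambbar}$ and $\tfrac{1}{\reg^2\lambbar^2} = \tfrac{\sqrt{\triangle\smooth_n}}{2\pi\sigma\sqrt{n}\,\lambbar}$; so $8E_2$ contributes $\tfrac{32\pi \sigma\sqrt{\triangle\smooth_n n}}{\lambbar}$ while the third term of $8E_1$ contributes at most $\tfrac{20\pi \expp{4}\,\sigma\sqrt{n\triangle\smooth_n}}{\lambbar}$ after using the bound $\norm{z-\zbar}_2^2 \leq 5\pi^2\sigma^2 n$. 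Using $\sigma \leq \tfrac{1}{2\sqrt{2}\pi}$ to guarantee $\expp{4} = e^{4\pi^2\sigma^2} \leq e^{1/2} \leq 2$ in this term and in the remaining $UU^T$ contribution then collapses all of these into the stated constants: $32\pi + 40\pi = 72\pi$ for the smoothness/variance cross-term, and $16\cdot 6190 = 99040$, $16 \cdot 4096 = 65536$ for the remaining noise terms. A union bound over the two events in Lemma \ref{lem:lem:simp_conc}\ref{lem:simp_conc_item1}--\ref{lem:simp_conc_item2} (each failing with probability at most $2/n^2$) gives the success probability $1 - 4/n^2$.

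The technical work is essentially bookkeeping; there is no real obstacle beyond careful constant-tracking. The only conceptual point is the rewriting $\expp{2}z - h = \expp{2}(z-\zbar)$, which is what allows us to route the quadratic form and norm terms through the concentration inequalities of Lemma \ref{lem:lem:simp_conc} (which are stated for $z-\zbar$, not for $z - h$). The noise constraint $\sigma \leq \tfrac{1}{2\sqrt{2}\pi}$ is exactly what makes the rescaling factor $\expp{4}$ bounded by a harmless constant, and the lower bound $\sigma \geq \tfrac{72\log n}{\pi\sqrt{n}}$ is inherited from the hypotheses of Lemma \ref{lem:lem:simp_conc}\ref{lem:simp_conc_item2}.
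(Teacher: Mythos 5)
Your proof is correct and follows essentially the same route as the paper's: you apply the deterministic decomposition of Lemma \ref{lem:ucqp_lem_err_1} with the simplifications $1+\reg\lambmin \geq 1$ and $1+\reg\lambbar \geq \reg\lambbar$, use the rewriting $\expp{2}z - h = \expp{2}(z-\zbar)$ to invoke Lemma \ref{lem:lem:simp_conc}\ref{lem:simp_conc_item1} and \ref{lem:simp_conc_item2}, bound $\expp{4}\leq 2$, and substitute the stated $\reg$ to arrive at the same constants $72\pi$, $99040 = 16\cdot 6190$, and $65536 = 16\cdot 4096$. The constant-tracking and union bound match the paper exactly.
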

\begin{proof}
We first simplify the bound in Lemma \ref{lem:ucqp_lem_err_1} with $1+\reg \lambmin \geq 1$, $1 + \reg \lambbar \geq \reg \lambbar$. Denoting $\zbar = \expn{2} h$, this yields
\begin{equation} \label{eq:ucqp_prob_err_bd_tmp_1}
\norm{\mproj{\gest} - h}_2^2 \leq 16 \reg^2 \triangle \smooth_n + 8\expp{4} \sum_{j \in \calL_{\lambbar} \cup \set{n}} \abs{\dotprod{q_j}{z- \zbar}}^2 + 8\frac{\expp{4}}{\reg^2 \lambbar^2} \norm{z-\zbar}_2^2. 
\end{equation}

Applying the bounds in \ref{lem:simp_conc_item1},\ref{lem:simp_conc_item2} of Lemma \ref{lem:lem:simp_conc} in \eqref{eq:ucqp_prob_err_bd_tmp_1}, and observing that $\expp{4} \leq 1 + 8\pi^2\sigma^2 \leq 2$ when $\sigma \leq \frac{1}{2\sqrt{2} \pi}$, we obtain with probability at least $1-\frac{4}{n^2}$ that
\begin{align} \label{eq:ucqp_prob_err_bd_tmp_2}
\norm{\frac{\gest}{\abs{\gest}} - h}_2^2 
\leq 16 \reg^2 \triangle \smooth_n + 99040 \sigma^2 \left(1+\abs{\calL_{\lambbar}} + \sqrt{(1 + \abs{\calL_{\lambbar}}) \log n} \right) + 65536 \log n + \frac{80\pi^2 \sigma^2 n}{\reg^2 \lambbar^2}.
\end{align}
Plugging the stated choice of $\reg$ in \eqref{eq:ucqp_prob_err_bd_tmp_2} leads to  \eqref{eq:ucqp_prob_err_bd}.
\end{proof}
By combining Lemma \ref{lem:lem:simp_conc}\ref{lem:simp_conc_item3} with Theorem \ref{thm:ucqp_prob_bd}, we obtain our main result that establishes conditions under which \eqref{prog:ucqp} provably denoises $z$ with high probability.
\begin{theorem} \label{thm:ucqp_denoise_prob}
Let $z \in \calC_n$ be generated as in \eqref{eq:noise_mod}, and $\varepsilon \in (0,1)$. For any given $\lambbar \in [\lambmin, \lambda_1]$
suppose that 
\begin{equation*}
    \max \set{\frac{69}{\varepsilon \lambbar} \sqrt{\frac{{\triangle \smooth_n}}{n}}, 142  \frac{\log n}{\sqrt{\varepsilon n}}} \leq 
    \sigma \leq \frac{1}{2 \sqrt{2} \pi} \quad \text{ and }  \quad 
    1+\abs{\calL_{\lambbar}} + \sqrt{(1 + \abs{\calL_{\lambbar}}) \log n} \leq \frac{\varepsilon n}{10035}.
\end{equation*}
If $\reg = (\frac{4\pi^2 \sigma^2 n}{\triangle \smooth_n \lambbar^2})^{1/4}$, then with probability at least $1-\frac{6}{n^2}$, the solution $\gest$ of \eqref{prog:ucqp} satisfies
\begin{equation} \label{eq:ucqp_eps_den_bd_prob}
\norm{\frac{\gest}{\abs{\gest}} - h}_2^2 \leq \varepsilon \norm{z - h}_2^2.    
\end{equation}
\end{theorem}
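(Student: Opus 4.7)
The plan is to combine Theorem \ref{thm:ucqp_prob_bd} with Lemma \ref{lem:lem:simp_conc}\ref{lem:simp_conc_item3}, both of which are already in place, and then verify that the quantitative hypotheses on $\sigma$ and $\abs{\calL_{\lambbar}}$ are exactly what is needed to make the resulting upper bound on $\norm{\mproj{\gest} - h}_2^2$ no larger than $\varepsilon$ times the lower bound on $\norm{z-h}_2^2$. No new technical machinery should be required.

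First, I would check that the hypothesis $\frac{72 \log n}{\pi \sqrt{n}} \leq \sigma \leq \frac{1}{2\sqrt{2}\pi}$ required by Theorem \ref{thm:ucqp_prob_bd} and Lemma \ref{lem:lem:simp_conc}\ref{lem:simp_conc_item3} is indeed implied by the stated noise conditions: the upper bound is already present, and the lower bound $\sigma \gtrsim \log n / \sqrt{\varepsilon n}$ (with $\varepsilon \leq 1$) dominates $\log n / \sqrt{n}$ up to the constants chosen in the statement. A union bound then gives both conclusions simultaneously with probability at least $1 - 6/n^2$. From that point, we have
\begin{equation*}
\norm{\mproj{\gest} - h}_2^2 \leq 72 \pi \frac{\sigma \sqrt{\triangle \smooth_n n}}{\lambbar} + 99040 \, \sigma^2 \bigl(1 + \abs{\calL_{\lambbar}} + \sqrt{(1+\abs{\calL_{\lambbar}}) \log n}\bigr) + 65536 \log n
\end{equation*}
and $\norm{z-h}_2^2 \geq \pi^2 \sigma^2 n$.

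The remaining step is purely arithmetic: split $\varepsilon \pi^2 \sigma^2 n$ into three pieces (say $\varepsilon \pi^2 \sigma^2 n / 3$ each, or whatever split matches the constants $69$, $10035$, $142$ in the statement) and verify that each of the three terms on the right-hand side above is bounded by its assigned share. The first term dictates the condition $\sigma \gtrsim \frac{1}{\varepsilon \lambbar}\sqrt{\triangle \smooth_n / n}$ after dividing by $\sigma$; the second dictates $1 + \abs{\calL_{\lambbar}} + \sqrt{(1+\abs{\calL_{\lambbar}}) \log n} \lesssim \varepsilon n$; the third dictates $\sigma \gtrsim \log n / \sqrt{\varepsilon n}$ (using $\log n \leq \log^2 n$ for $n \geq 3$ to match the precise form $142 \log n / \sqrt{\varepsilon n}$ appearing in the statement). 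Each of these is exactly a hypothesis of the theorem, so the sum is at most $\varepsilon \pi^2 \sigma^2 n \leq \varepsilon \norm{z-h}_2^2$.

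There is no real obstacle in this proof beyond careful constant tracking: the hard analytical work has already been done in Lemma \ref{lem:ucqp_lem_err_1} (bias/variance decomposition via the spectrum of $L$), in Proposition \ref{prop:conc_bounds} (concentration of Gaussian-phase quadratic forms), and in Theorem \ref{thm:ucqp_prob_bd} (combining the two with the optimal $\reg$). The only mildly delicate point is making sure that the lower bounds on $\sigma$ coming from the third term and from Lemma \ref{lem:lem:simp_conc}\ref{lem:simp_conc_item3} are consolidated into the single constant $142$ appearing in the hypothesis; this is where I expect to spend the most care, but it is entirely mechanical.
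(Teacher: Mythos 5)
Your proposal matches the paper's own proof essentially line by line: apply Lemma \ref{lem:lem:simp_conc}\ref{lem:simp_conc_item3} to lower-bound $\norm{z-h}_2^2$ by $\pi^2\sigma^2 n$, apply Theorem \ref{thm:ucqp_prob_bd} to upper-bound the projected estimator error, take a union bound to get probability $1-6/n^2$, and split the budget $\varepsilon \pi^2 \sigma^2 n$ into three pieces so that each term of the error bound is controlled by one of the stated hypotheses. Your observation that the lower bound $142\log n/\sqrt{\varepsilon n} \geq 72\log n/(\pi\sqrt{n})$ (so the hypotheses of the two ingredients are satisfied) and the remark on $\log n \leq \log^2 n$ to reconcile the third term with the stated form of the noise floor are exactly the bookkeeping the paper's proof alludes to with ``one can check that the stated conditions in the Theorem suffice''; this is correct and is the same route.
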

\begin{proof}
Recall from Lemma \ref{lem:lem:simp_conc}\ref{lem:simp_conc_item3}, that if $\frac{72\log n}{\pi\sqrt{n}} \leq \sigma \leq \frac{1}{2\sqrt{2} \pi}$ then $\norm{z - h}_2^2 \geq \pi^2 \sigma^2 n$ holds with high probability. Using \eqref{eq:ucqp_prob_err_bd} from Theorem \ref{thm:ucqp_prob_bd}, we hence note that \eqref{eq:ucqp_eps_den_bd_prob} is ensured if
\begin{equation} \label{ucqp_mainthm_prob_tmp1}
72\pi  \frac{\sigma \sqrt{\triangle \smooth_n n}}{\lambbar}  + 99040 \sigma^2 \left(1+\abs{\calL_{\lambbar}} + \sqrt{(1 + \abs{\calL_{\lambbar}}) \log n} \right) + 65536 \log n \leq \varepsilon \pi^2 \sigma^2 n
\end{equation}
which in turn is ensured provided each LHS term of \eqref{ucqp_mainthm_prob_tmp1} is less than or equal to $\frac{\varepsilon \pi^2 \sigma^2 n}{3}$. Combining the resulting three conditions with the requirement $\frac{72\log n}{\pi\sqrt{n}} \leq \sigma \leq \frac{1}{2\sqrt{2} \pi}$, one can check that the stated conditions in the Theorem suffice.
\end{proof}
As in Corollary \ref{cor:ucqp_den_expec}, we can translate the conditions of Theorem \ref{thm:ucqp_denoise_prob} for the special cases $G = K_n, S_n$ or $P_n$. The proof is similar to that of Corollary \ref{cor:ucqp_den_expec} and hence omitted.
\begin{corollary} \label{cor:ucqp_den_prob}
 Let $z \in \calC_n$ be generated as in \eqref{eq:noise_mod}, and $\varepsilon \in (0,1)$. 
 \begin{enumerate}[label=\upshape(\roman*)]
     \item\label{cor:ucqp_den_prob_eps_Kn} {($G = K_n$)} Suppose $\frac{n}{\sqrt{\log n}} \gtrsim \frac{1}{\varepsilon}$ and $\max\set{\frac{1}{n \varepsilon} \sqrt{\smooth_n}, \frac{\log n}{\sqrt{\varepsilon n}}  } \lesssim \sigma \lesssim 1$.  If $\reg \asymp (\frac{\sigma^2}{n^2 \smooth_n })^{1/4}$, then the solution $\gest$ of \eqref{prog:ucqp} satisfies \eqref{eq:ucqp_eps_den_bd_prob} w.h.p.    
     
     \item\label{cor:ucqp_den_prob_eps_Sn} {($G = S_n$)} Suppose $\frac{n}{\sqrt{\log n}} \gtrsim \frac{1}{\varepsilon}$ and $\max\set{\frac{1}{\varepsilon} \sqrt{\smooth_n}, \frac{\log n}{\sqrt{\varepsilon n}} } \lesssim \sigma \lesssim 1$.  If $\reg \asymp (\frac{\sigma^2}{\smooth_n })^{1/4}$, then the solution $\gest$ of \eqref{prog:ucqp} satisfies \eqref{eq:ucqp_eps_den_bd_prob} w.h.p.
     
     \item\label{cor:ucqp_den_prob eps_Pn} {($G = P_n$)} For a given $\theta \in [0,1)$, suppose $n^{\theta} + \sqrt{n^{\theta} \log n} \lesssim \varepsilon n$ and $\max\set{\frac{n^{\frac{3 -4\theta}{2}}}{\varepsilon} \sqrt{\smooth_n}, \frac{\log n}{\sqrt{\varepsilon n}} } \lesssim \sigma \lesssim 1$.  If $\reg \asymp (\frac{\sigma^2 n^{5-4\theta}}{\smooth_n })^{1/4}$, then the solution $\gest$ of \eqref{prog:ucqp} satisfies \eqref{eq:ucqp_eps_den_bd_prob} w.h.p. 
 \end{enumerate}
\end{corollary}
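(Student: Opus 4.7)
The plan is to simply instantiate Theorem \ref{thm:ucqp_denoise_prob} for each of the three graph families, following the exact same template as the proof of Corollary \ref{cor:ucqp_den_expec}. The only essentially new ingredient compared to the expectation case is the appearance of the extra high-probability condition $\sigma \gtrsim \log n/\sqrt{\varepsilon n}$ in the lower bound on $\sigma$, together with the strengthened requirement $1+\abs{\calL_{\lambbar}}+\sqrt{(1+\abs{\calL_{\lambbar}})\log n} \lesssim \varepsilon n$ on the size of $\calL_{\lambbar}$. Both are direct artefacts of the concentration steps in Lemma \ref{lem:lem:simp_conc}, so no new technical tools are needed.

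For each graph I would first recall the Laplacian spectrum and the maximum degree (as quoted in the proof of Corollary \ref{cor:ucqp_den_expec} from \cite[Chapter 1]{brouwer12}), then choose $\lambbar$ so that $\abs{\calL_{\lambbar}}$ is as small as possible while keeping $\lambbar \in [\lambmin, \lambda_1]$:
\begin{itemize}
\item For $K_n$, pick $\lambbar = n$ (all nonzero eigenvalues equal $n$), giving $\abs{\calL_{\lambbar}} = 0$ and $\triangle = n-1$. The condition on $\abs{\calL_{\lambbar}}$ reduces to $1+\sqrt{\log n}\lesssim \varepsilon n$, i.e.\ $n/\sqrt{\log n}\gtrsim 1/\varepsilon$. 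Substituting $\triangle \asymp n$ and $\lambbar = n$ into the lower bound on $\sigma$ from Theorem \ref{thm:ucqp_denoise_prob} yields $\sigma \gtrsim \max\{\sqrt{\smooth_n}/(n\varepsilon), \log n/\sqrt{\varepsilon n}\}$, and the choice of $\reg$ simplifies as stated.
\item For $S_n$, pick $\lambbar = 1$ (the $n-2$ middle eigenvalues all equal $1$), again giving $\abs{\calL_{\lambbar}} = 0$ and $\triangle = n-1$; the same size condition on $n$ appears, while $\triangle \asymp n$ and $\lambbar = 1$ produce the stated lower bound on $\sigma$ and the stated choice of $\reg$.
\item For $P_n$, use $\lambda_j = 4\sin^2[(\pi/(2n))(n-j)]$, $\triangle = 2$, and pick $\lambbar \asymp n^{-2(1-\theta)}$ for the given $\theta\in[0,1)$. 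A direct count (as in the proof of Corollary \ref{cor:ucqp_den_expec}) shows $\abs{\calL_{\lambbar}} \lesssim n^{\theta}$, so the size condition becomes $n^{\theta}+\sqrt{n^{\theta}\log n}\lesssim \varepsilon n$. Plugging $\triangle = 2$ and $\lambbar \asymp n^{-2(1-\theta)}$ into Theorem \ref{thm:ucqp_denoise_prob} gives the stated bounds on $\sigma$ and $\reg$ after rearrangement.
\end{itemize}

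There is no real obstacle here; the only bookkeeping point is to verify that in each case the chosen $\lambbar$ lies in $[\lambmin, \lambda_1]$ (trivial for $K_n$ and $S_n$, and for $P_n$ it follows because $\lambmin \asymp 1/n^2$ so $\lambbar \asymp n^{-2(1-\theta)}\geq \lambmin$ for $\theta \in [0,1)$), and to keep track of the $\max$ over the two lower bounds on $\sigma$ in Theorem \ref{thm:ucqp_denoise_prob}. Once that is done, the three statements follow by direct substitution, and the success probability is at least $1 - 6/n^2$ as in Theorem \ref{thm:ucqp_denoise_prob}, which qualifies as w.h.p.
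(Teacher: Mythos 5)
Your proof is correct and is precisely the route the paper intends (and explicitly says it takes: instantiate Theorem \ref{thm:ucqp_denoise_prob} with the same choices of $\lambbar$ and $\triangle$ as in the proof of Corollary \ref{cor:ucqp_den_expec}). All three substitutions check out, including the exponent arithmetic for $P_n$ and the reduction of the $\calL_{\lambbar}$-size condition to $n/\sqrt{\log n}\gtrsim 1/\varepsilon$ when $\abs{\calL_{\lambbar}}=0$.
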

\begin{remark} \label{rem:cor_ex_graph_prob_ucqp}
As in Remark \ref{rem:cor_ex_graph_expec_ucqp}, we can deduce conditions on the smoothness term $\smooth_n$ which lead to a non-vacuous regime for $\sigma$ of the form $o(1) \leq \sigma \lesssim 1$. These are detailed below when $n \rightarrow \infty$.
\begin{enumerate}
    \item {($G = K_n$)} If $\varepsilon$ is fixed then $\smooth_n = o(n^2)$ suffices while if $\varepsilon = \varepsilon_n \rightarrow 0$ and $\varepsilon_n = \omega(\frac{\log n}{\sqrt{n}})$, then $\smooth_n = o(\varepsilon_n^2 n^2)$ is sufficient. 
    
    \item {($G = S_n$)} If $\varepsilon$ is fixed then $\smooth_n = o(1)$ suffices while if $\varepsilon_n \rightarrow 0$ and $\varepsilon_n = \omega(\frac{\log n}{\sqrt{n}})$, then $\smooth_n = o(\varepsilon_n^2)$ is sufficient.
    
    \item {($G = P_n$)} For fixed $\varepsilon$, it is sufficient that $\smooth_n = o(n^{4\theta-3})$. On the other hand, if $\varepsilon_n \rightarrow 0$  and $\varepsilon_n = \omega(\max\{\frac{\log n}{\sqrt{n}}, \frac{n^{\theta} + \sqrt{n^{\theta} \log n}}{n}\})$ then the condition $\smooth_n = o(\varepsilon_n^2 n^{4\theta-3})$ suffices. 
\end{enumerate}
\end{remark}
%
%
%-----------------------------------------
% Denoising modulo samples of a function
%
\paragraph{Denoising modulo samples of a function.} We conclude this section by instantiating the aforementioned bounds to the setting described in Section \ref{subsec:prob_setup}. Recall that here, we obtain noisy modulo $1$ samples of a smooth function $f: [0,1] \rightarrow \mathbb{R}$ with $G = P_n$. This leads to the following Corollary of Theorems \ref{thm:ucqp_prob_bd} and \ref{thm:ucqp_denoise_prob}.
\begin{corollary} \label{cor:ucqp_den_func_mod}
Consider the example from Section \ref{subsec:prob_setup} where we obtain noisy modulo 1 samples of a $M$-Lipschitz function $f:[0,1] \rightarrow \matR$. If $\reg \asymp \left(\frac{\sigma^2 n^{10/3}}{M^2} \right)^{1/4}$ then the following is true for the solution $\gest$ of \eqref{prog:ucqp}.
\begin{enumerate}
    \item If $\frac{\log n}{\sqrt{n}} \lesssim \sigma \lesssim 1$ and $n \gtrsim 1$ then w.h.p,
    \begin{equation*}
       \norm{\frac{\gest}{\abs{\gest}} - h}_2^2 \lesssim (\sigma M  + \sigma^2) n^{2/3} + \log n.
    \end{equation*}

    \item For $\varepsilon \in (0,1)$, 
    if $n \gtrsim (1/\varepsilon)^3$ and  $\max\set{\frac{M}{\varepsilon n^{1/3}}, \frac{\log n}{ \sqrt{\varepsilon n}} } \lesssim \sigma \lesssim 1$, then $\gest$ satisfies \eqref{eq:ucqp_eps_den_bd_prob} w.h.p.
\end{enumerate}
\end{corollary}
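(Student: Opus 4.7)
The plan is to reduce both parts to the already-established Theorem \ref{thm:ucqp_prob_bd} and Corollary \ref{cor:ucqp_den_prob}\ref{cor:ucqp_den_prob eps_Pn} by specializing to the path graph with an explicit smoothness bound. The only two preparatory ingredients I need are: (a) an estimate of $\smooth_n$ in terms of the Lipschitz constant $M$, and (b) the known spectral data for $L$ when $G=P_n$.

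First, I would bound $\smooth_n$ by combining \eqref{eq:func_quad_var_Bn} with the $M$-Lipschitz property of $f$ and the uniform grid spacing $|x_{i+1}-x_i| = 1/(n-1)$. This gives
$$h^* L h \leq 4\pi^2 \sum_{i=1}^{n-1} \abs{f(x_i)-f(x_{i+1})}^2 \leq \frac{4\pi^2 M^2}{n-1} \asymp \frac{M^2}{n},$$
so it is legitimate to take $\smooth_n \asymp M^2/n$. For $P_n$ we have $\triangle=2$, and as recorded in the proof of Corollary \ref{cor:ucqp_den_expec}, $\lambda_j \asymp \sin^2(\pi(n-j)/(2n))$, giving $\lambda_1 \asymp 1$ and $\lambmin \asymp 1/n^2$.

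Next I would fix the cut-off frequency. Following Corollary \ref{cor:ucqp_den_prob}\ref{cor:ucqp_den_prob eps_Pn}, set $\lambbar \asymp n^{-2(1-\theta)}$ for some $\theta \in [0,1)$, which yields $|\calL_{\lambbar}| \lesssim n^{\theta}$. The generic choice $\reg \asymp (\sigma^2 n/(\triangle \smooth_n \lambbar^2))^{1/4}$ then specializes to $\reg \asymp (\sigma^2 n^{6-4\theta}/M^2)^{1/4}$, and matching this against the stated $\reg \asymp (\sigma^2 n^{10/3}/M^2)^{1/4}$ forces $\theta = 2/3$. Thus $\lambbar \asymp n^{-2/3}$ and $|\calL_{\lambbar}| \lesssim n^{2/3}$.

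For part~1, I would plug these values into the error bound \eqref{eq:ucqp_prob_err_bd} of Theorem \ref{thm:ucqp_prob_bd}. The leading term becomes $\sigma \sqrt{\triangle \smooth_n n}/\lambbar \asymp \sigma M n^{2/3}$, the middle contribution satisfies $\sigma^2(1+|\calL_{\lambbar}| + \sqrt{(1+|\calL_{\lambbar}|)\log n}) \lesssim \sigma^2 n^{2/3}$ once $n$ is large enough that $n^{1/3}\sqrt{\log n} \lesssim n^{2/3}$, and the last term is the $\log n$ overhead. The hypothesis $\log n/\sqrt{n} \lesssim \sigma \lesssim 1$ is exactly the regime in Theorem \ref{thm:ucqp_prob_bd} (up to the constants absorbed in $\lesssim$). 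For part~2, I would simply invoke Corollary \ref{cor:ucqp_den_prob}\ref{cor:ucqp_den_prob eps_Pn} at $\theta = 2/3$: the condition $n^{\theta} + \sqrt{n^{\theta}\log n} \lesssim \varepsilon n$ becomes $n \gtrsim (1/\varepsilon)^3$, while the lower bound on $\sigma$ reduces to $\max\{M/(\varepsilon n^{1/3}),\, \log n/\sqrt{\varepsilon n}\}$, as stated. The main obstacle is purely bookkeeping — tracking how the given $\reg$ pins down the parameter $\theta$ and checking that all the error terms collapse into the advertised form; no new analytic input is required beyond the preceding general theorems.
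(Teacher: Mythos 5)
Your proposal is correct and follows essentially the same route as the paper: bound $\smooth_n \asymp M^2/n$ via \eqref{eq:func_quad_var_Bn} and Lipschitzness, set $\lambbar \asymp n^{-2(1-\theta)}$ so $\abs{\calL_{\lambbar}} \lesssim n^{\theta}$, pin down $\theta = 2/3$ (the paper motivates this by ``balancing exponents'' in the error bound, you obtain it by matching the stated $\reg$ --- same thing), then apply Theorem \ref{thm:ucqp_prob_bd} for part~1 and Corollary \ref{cor:ucqp_den_prob}\ref{cor:ucqp_den_prob eps_Pn} for part~2. The bookkeeping (absorbing $n^{1/3}\sqrt{\log n}$ into $n^{2/3}$ for $n \gtrsim 1$, reading $n^{2/3} \lesssim \varepsilon n$ as $n \gtrsim (1/\varepsilon)^3$, and computing the exponent $n^{(3-4\theta)/2}\sqrt{\smooth_n} \asymp M n^{-1/3}$) all checks out.
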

\begin{proof}
Starting with \eqref{eq:func_quad_var_Bn}, we have 
\begin{equation} \label{eq:func_quad_var_Bn_1}
  \sum_{i=1}^{n-1} \abs{h_i - h_{i+1}}^2 = 4\pi^2 \sum_{i=1}^{n-1} \abs{f(x_i) - f(x_{i+1})}^2 \leq \frac{4\pi^2 M^2}{n-1} \leq \frac{8 \pi^2 M^2}{n} \ (= \smooth_n)
\end{equation}
where the penultimate inequality uses the Lipschitz continuity of $f$, and the last inequality uses $n-1 \geq n/2$. 

For the first part, recall from Corollary \ref{cor:ucqp_den_expec} \ref{cor:ucqp_den_eps_Pn} that $\lambbar \asymp \frac{1}{n^{2(1-\theta)}}$ for $\theta \in [0,1)$, which implies $\abs{\calL_{\lambbar}} \lesssim n^{\theta}$. Applying this along with \eqref{eq:func_quad_var_Bn_1} to Theorem \ref{thm:ucqp_prob_bd}, we obtain the bound 
\begin{equation*}
      \norm{\frac{\gest}{\abs{\gest}} - h}_2^2 \lesssim \sigma M n^{2(1-\theta)} + \sigma^2 (n^{\theta} + \sqrt{n^{\theta} \log n}) + \log n.
\end{equation*}
Notice that we need $1/2 < \theta < 1$ to get a non-trivial bound. The choice $\theta = 2/3$ ``balances'' the exponents of $n$, and if moreover $n \gtrsim 1$, then we obtain the statement of the first part.

The second part follows easily by plugging \eqref{eq:func_quad_var_Bn_1} along with $\theta = 2/3$ in Corollary \ref{cor:ucqp_den_prob}\ref{cor:ucqp_den_prob eps_Pn}.
\end{proof}
\begin{remark} \label{rem:ucqp_lw_bd_subopt}
We can see that the estimation error bound for \eqref{prog:ucqp} is significantly better than the one in \eqref{eq:cmt_err_bd} derived by Cucuringu and Tyagi \cite{CMT18_long} for \eqref{prog:trs}.  For a constant $\sigma$, our $\ell_2$ error bound scales as $O(n^{1/3})$ for large enough $n$, however this is not optimal. In a parallel work \cite{fanuel20} with the present paper, it is shown that for the complex valued function $h(x) = \exp(\iota 2\pi f(x))$, for any given $x \in [0,1]$, one can obtain an estimate $\est{h}(x)$ such that $\expec[\est{h}(x) - h(x)]^2 = O(n^{-2/3})$ which is also consistent with the pointwise minimax rate for estimating a Lipschitz function\footnote{Note that if $f:[0,1] \rightarrow \matR$ is $M$-Lipschitz, then it implies that $h(\cdot)$ is $2\pi M$ Lipschitz since $\abs{h(x) - h(x')} \leq 2\pi \abs{f(x) - f(x')} \leq 2\pi M \abs{x-x'}$ for all $x,x' \in [0,1]$.} \cite{nemirovski2000topics}. This suggests that for the example from Section \ref{subsec:prob_setup}, the best $\ell_2$ error bound we can hope for is $O(n^{1/6})$. It will be interesting to see whether the analysis for \eqref{prog:ucqp} can be improved to an extent such that instantiating the result for $G = P_n$ achieves this optimal bound, we discuss this further in Section \ref{subsec:fut_work}; see also Remark \ref{rem:sadhanala_bounds} in Section \ref{subsec:learn_smooth_rel}. 
\end{remark}

% l_2 bound for TRS
%------------------------------
% error bounds for (TRS)
%------------------------------
\section{Error bounds for \eqref{prog:trs}} \label{sec:l2_TRS}
We now proceed to derive a $\ell_{2}$ error bound for the solution $\gest$ of \eqref{prog:trs}. Following the notation in \cite{CMT18_long}, we can represent any $x \in \mathbb{C}^n$ via $\bar{x} \in \mathbb{R}^{2n}$, where $\bar{x} = [\real(x)^T \  \imag(x)^T]^T$. Moreover, consider the matrix 
\begin{eqnarray*} 
H = \begin{pmatrix}
  \reg L \quad & 0 \\ 
  0 \quad & \reg L  
 \end{pmatrix}  =  \reg \begin{pmatrix}
   1 & 0 \\ 
   0 & 1   
 \end{pmatrix} \otimes L
 \;\;  \in  \matR^{2n \times 2n}
\end{eqnarray*} 
where $\otimes$ denotes the Kronecker product. Then it is easy to check that \eqref{prog:trs} is equivalent to 
\begin{align} 
\min_{\gbar \in \mathbb{R}^{2n}: \norm{\gbar}_2^2 = n}  \gbar^T H \gbar - 2 \gbar^T \zbar \label{prog:trs_real} \tag{$\text{TRS}\matR$}
\end{align}
where $\gest$ is a solution of \eqref{prog:trs} iff $\gbarest = [\real(\gest)^T \ \imag(\gest)^T]^T$ is a solution of \eqref{prog:trs_real}.

The following Lemma from \cite{CMT18_long}, which in turn is a direct consequence of \cite[Lemma 2.4, 2.8]{Sorensen82} (see also \cite[Lemma 1]{Hager01}),  characterizes any solution $\gbarest$ of \eqref{prog:trs_real}.
%
%-------------------------------------------
% Lemma for characterizing solution of TRSR
%-------------------------------------------
\begin{lemma}[\cite{CMT18_long}] \label{lemma:trs_real_sol_charac}
For any given $\zbar \in \mathbb{R}^{2n}$, $\gbarest$ is a solution to \eqref{prog:trs_real} iff $\norm{\gbarest}_2^2 = n$ and $\exists \mu^{\star}$ such that
(a) $2 H + \mu^{\star} I \succeq 0$ and (b) $(2 H + \mu^{\star} I) \gbarest = 2\zbar$. Moreover, if $2 H + \mu^{\star} I \succ 0$, then the solution is unique.
\end{lemma}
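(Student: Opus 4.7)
\medskip
\noindent\textbf{Proof plan.} The statement is the classical TRS characterization, so the plan is to (i) derive condition (b) directly from the Lagrangian, (ii) prove sufficiency by a completing-the-square identity, and (iii) invoke \cite[Lemmas 2.4, 2.8]{Sorensen82} to get the global semidefiniteness in (a) from the necessary side. First I would introduce the Lagrangian
\[
\calL(\gbar,\mu) = \gbar^T H\gbar - 2\gbar^T\zbar + \frac{\mu}{2}\bigl(\norm{\gbar}_2^2 - n\bigr),
\]
whose stationarity equation $\nabla_{\gbar}\calL = 0$ is precisely $(2H+\mu I)\gbar = 2\zbar$, i.e.\ condition (b). Thus both directions of the ``iff'' ultimately pivot on whether condition (a) can also be secured.

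For the sufficient direction I would fix any candidate $\gbarest$ satisfying $\norm{\gbarest}_2^2 = n$, (a) and (b), and compare its objective value to that of an arbitrary feasible $\gbar$ (so $\norm{\gbar}_2^2 = n$ as well). Substituting $\zbar = \tfrac{1}{2}(2H + \mu^{\star}I)\gbarest$ from (b) into the difference of the two objective values, expanding, and using the identity
\[
\mu^{\star}\bigl(n - \gbar^T\gbarest\bigr) = \tfrac{\mu^{\star}}{2}\bigl(\norm{\gbar}_2^2 + \norm{\gbarest}_2^2 - 2\gbar^T\gbarest\bigr) = \tfrac{\mu^{\star}}{2}\norm{\gbar-\gbarest}_2^2,
\]
one obtains the clean formula
\[
\bigl(\gbar^T H\gbar - 2\gbar^T\zbar\bigr) - \bigl(\gbarest^T H\gbarest - 2\gbarest^T\zbar\bigr) = \tfrac{1}{2}(\gbar - \gbarest)^T (2H + \mu^{\star}I)(\gbar - \gbarest).
\]
Condition (a) then makes the RHS nonnegative, so $\gbarest$ is a global minimizer. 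Moreover, if $2H + \mu^{\star}I \succ 0$, the RHS is strictly positive unless $\gbar = \gbarest$, which yields the claimed uniqueness.

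For the necessary direction, feasibility gives $\norm{\gbarest}_2^2 = n$ and ordinary Lagrange theory applied to the equality constraint $\norm{\gbar}_2^2 = n$ yields some multiplier $\mu^{\star}$ for which (b) holds. The nontrivial part is upgrading the second-order tangential condition (positive semidefiniteness on the tangent space of the sphere) to the \emph{global} semidefiniteness $2H + \mu^{\star}I \succeq 0$ of (a). This is exactly the content of Sorensen's analysis of the TRS, and I would simply cite \cite[Lemmas 2.4, 2.8]{Sorensen82} here, noting that \eqref{prog:trs_real} is the real TRS in standard form with $\zbar\in\matR^{2n}$ and symmetric $H$.

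\medskip
\noindent\textbf{Main obstacle.} The only nonroutine step is the necessary direction's claim (a): standard KKT theory on a smooth equality constraint only delivers the reduced Hessian being PSD on the tangent space of $\set{\norm{\gbar}_2 = \sqrt n}$, which is strictly weaker than $2H + \mu^{\star}I \succeq 0$ on all of $\matR^{2n}$. The classical resolution (Sorensen, Gay, Mor\'e--Sorensen) is a hidden-convexity-type argument specific to quadratics on a sphere; rather than reproving this, I would invoke it as a black box since the lemma is quoted from \cite{CMT18_long} for exactly that purpose. All the remaining computations are the elementary identity above.
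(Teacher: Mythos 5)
Your proof is correct. The paper itself offers no argument for this lemma --- it is quoted verbatim from \cite{CMT18_long} and attributed to \cite[Lemmas~2.4, 2.8]{Sorensen82} (and \cite[Lemma~1]{Hager01}) --- so there is no internal proof to compare against; what you have done is reconstruct the standard proof from the cited literature.

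Your completing-the-square identity for the sufficient direction is exactly right. Substituting $\zbar = \tfrac12(2H+\mu^\star I)\gbarest$ into $\phi(\gbar)-\phi(\gbarest)$ with $\phi(\gbar)=\gbar^T H\gbar - 2\gbar^T\zbar$ gives
\[
\phi(\gbar)-\phi(\gbarest) = (\gbar-\gbarest)^T H (\gbar-\gbarest) + \mu^\star\bigl(n - \gbar^T\gbarest\bigr),
\]
and the polarization step $n-\gbar^T\gbarest = \tfrac12\norm{\gbar-\gbarest}_2^2$ (valid precisely because both $\gbar$ and $\gbarest$ lie on the sphere of radius $\sqrt n$) collapses this to $\tfrac12(\gbar-\gbarest)^T(2H+\mu^\star I)(\gbar-\gbarest)$. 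Condition (a) makes this nonnegative, giving global optimality, and strict definiteness gives uniqueness, exactly as you argue. You are also right that the nontrivial content is in the necessary direction --- standard Lagrange/KKT theory only yields PSD of $2H+\mu^\star I$ on the tangent space to the sphere, and the upgrade to global PSD is the Gay/Sorensen ``hidden convexity'' result specific to the spherical trust region constraint. Deferring that step to \cite{Sorensen82}, as the paper itself does, is entirely appropriate. One cosmetic remark: you write the constraint term in the Lagrangian as $\tfrac{\mu}{2}(\norm{\gbar}_2^2 - n)$ but then state the stationarity equation as $(2H+\mu I)\gbar = 2\zbar$; with the $\tfrac12$ normalization the stationarity equation is actually $(2H+\mu I)\gbar = 2\zbar$ only if you drop the $\tfrac12$, i.e.\ use $\mu(\norm{\gbar}_2^2-n)$. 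This does not affect the argument since $\mu^\star$ is just renamed, but the two should be made consistent.
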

Due to the equivalence of \eqref{prog:trs} and \eqref{prog:trs_real} as discussed above, this readily leads to the following characterization for any solution $\gest$ of \eqref{prog:trs}.
%
%
%-------------------------------------------
% Lemma for characterizing solution of TRS
%-------------------------------------------
\begin{lemma} \label{lemma:trs_sol_charac}
For any given $z \in \mathbb{C}^n$, $\gest$ is a solution to \eqref{prog:trs} iff $\norm{\gest}_2^2 = n$ and $\exists \mu^{\star}$ such that
(a) $2 \reg L + \mu^{\star} I \succeq 0$ and (b) $(2 \reg L + \mu^{\star} I) \gest = 2z$. Moreover, if $2 \reg L + \mu^{\star} I \succ 0$, then the solution is unique.
\end{lemma}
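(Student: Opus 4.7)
The plan is to reduce everything to Lemma~\ref{lemma:trs_real_sol_charac} by invoking the bijective correspondence $x \leftrightarrow \bar x = [\real(x)^T \ \imag(x)^T]^T$ between $\mathbb{C}^n$ and $\matR^{2n}$, which the excerpt has already shown identifies \eqref{prog:trs} with \eqref{prog:trs_real} (objectives and feasible sets). Under this bijection $\gest$ is optimal for \eqref{prog:trs} iff $\gbarest$ is optimal for \eqref{prog:trs_real}, so it suffices to translate each of the three conditions in Lemma~\ref{lemma:trs_real_sol_charac} back to the complex side.

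For the norm constraint this is immediate, since $\norm{\gbarest}_2^2 = \norm{\real(\gest)}_2^2 + \norm{\imag(\gest)}_2^2 = \norm{\gest}_2^2$. For the other two conditions the key observation is the Kronecker structure $H = \reg (I_2 \otimes L)$, which gives $2H + \mu^{\star} I_{2n} = I_2 \otimes (2\reg L + \mu^{\star} I_n)$. Since $L$ is real symmetric, the eigenvalues of $I_2 \otimes (2\reg L + \mu^{\star} I_n)$ are exactly those of $2\reg L + \mu^{\star} I_n$, each with doubled multiplicity; hence $2H + \mu^{\star} I \succeq 0$ iff $2\reg L + \mu^{\star} I \succeq 0$, and similarly with $\succ$ in place of $\succeq$. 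This takes care of condition (a) and, via Lemma~\ref{lemma:trs_real_sol_charac}'s final sentence, the uniqueness claim.

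For condition (b), I would split the linear system $(2H + \mu^{\star} I)\gbarest = 2\zbar$ into its top and bottom $n$-blocks. The block diagonal form of $H$ decouples them into $(2\reg L + \mu^{\star} I_n) \real(\gest) = 2 \real(z)$ and $(2\reg L + \mu^{\star} I_n) \imag(\gest) = 2 \imag(z)$, and since $2\reg L + \mu^{\star} I_n$ has real entries, multiplying the second equation by $\iota$ and adding yields the complex equation $(2\reg L + \mu^{\star} I_n)\gest = 2z$. This step is reversible, so the two conditions are equivalent.

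There is no real obstacle here beyond bookkeeping: the only point that requires a moment's care is checking that the PSD/positive-definiteness of the complex matrix $2\reg L + \mu^{\star} I_n$ (when viewed as a Hermitian matrix acting on $\mathbb{C}^n$) is the same notion as PSD/positive-definiteness of $I_2 \otimes (2\reg L + \mu^{\star} I_n)$ on $\matR^{2n}$; but since $L$ is real symmetric this is a direct spectral statement, as noted above. Putting the three translations together yields precisely the stated characterization of solutions of \eqref{prog:trs}.
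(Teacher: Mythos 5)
Your proposal is correct and follows exactly the route the paper intends: it derives Lemma~\ref{lemma:trs_sol_charac} from Lemma~\ref{lemma:trs_real_sol_charac} via the bijection $g \leftrightarrow \gbar$, using the Kronecker identity $2H + \mu^\star I_{2n} = I_2 \otimes (2\gamma L + \mu^\star I_n)$ to translate the PSD condition and the block-diagonal structure to translate the linear system. The paper simply asserts that the result ``readily'' follows from the equivalence of \eqref{prog:trs} and \eqref{prog:trs_real}; your write-up supplies the bookkeeping that the paper leaves implicit, and all three translations (norm, PSD, linear equation) are handled correctly.
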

Note that the above Lemma's do not require any sphere constraint on $\zbar, z$. We now use Lemma \ref{lemma:trs_sol_charac} to derive the following crucial Lemma which states upper and lower bounds on $\mu^{\star}$. The notation $\calN(L)$ is used to denote the null space of $L$, which is the span of $q_n$ since $G$ is connected by assumption.
\begin{lemma} \label{lemma:useful_trs_sol}
For any $z \in \calC_n$ satisfying $z \not\perp$ $\calN(L)$, we have that $\gest = 2(2 \reg L + \mu^{\star} I)^{-1} z$ is the unique solution of \eqref{prog:trs} with $\mu^{\star} \in (0,2]$. Additionally, if $\lambtil$ is an eigenvalue of $L$ satisfying 
\begin{equation*}
     \frac{1}{\sqrt{n}} \left(\sum_{j:\lambda_j \leq \lambtil} \abs{\dotprod{z}{q_j}}^2\right)^{1/2} > \reg \lambtil
\end{equation*}
then it holds that 
\begin{equation*}
\frac{2}{\sqrt{n}} \left(\sum_{j:\lambda_j \leq \lambtil} \abs{\dotprod{z}{q_j}}^2\right)^{1/2} - 2 \reg \lambtil \leq \mu^{\star} \leq 2.
\end{equation*}
\end{lemma}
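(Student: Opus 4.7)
The plan is to invoke the characterization from Lemma \ref{lemma:trs_sol_charac} and then extract information about $\mu^\star$ from the norm equation $\norm{\gest}_2^2 = n$ after diagonalizing in the eigenbasis $\{q_j\}$ of $L$. First I would argue that $\mu^\star > 0$: the semidefiniteness condition $2\reg L + \mu^\star I \succeq 0$ forces $\mu^\star \geq 0$ (since $\lambda_n = 0$), but the case $\mu^\star = 0$ would force $2\reg L \gest = 2z$, requiring $z \in \range(L)$. Because $L$ is symmetric, $\range(L) = \calN(L)^\perp$, and the hypothesis $z \not\perp \calN(L)$ rules this out. Hence $\mu^\star > 0$, so $2\reg L + \mu^\star I \succ 0$, giving both uniqueness and the explicit inversion $\gest = 2(2\reg L + \mu^\star I)^{-1} z$.

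Next, to obtain the bounds on $\mu^\star$, I would expand the constraint $\norm{\gest}_2^2 = n$ in the eigenbasis. Using the orthonormality of $\{q_j\}$,
\begin{equation*}
n \;=\; \norm{\gest}_2^2 \;=\; 4 \sum_{j=1}^{n} \frac{\abs{\dotprod{z}{q_j}}^2}{(2\reg \lambda_j + \mu^\star)^2}.
\end{equation*}
For the upper bound, I would bound the denominator below by $(\mu^\star)^2$ (achieved at $j=n$) and use $\sum_j \abs{\dotprod{z}{q_j}}^2 = \norm{z}_2^2 = n$, to obtain $n \leq 4n/(\mu^\star)^2$, i.e.\ $\mu^\star \leq 2$.

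For the lower bound, I would restrict the sum to the index set $\calI := \set{j : \lambda_j \leq \lambtil}$ and bound the denominator above by $(2\reg \lambtil + \mu^\star)^2$, yielding
\begin{equation*}
n \;\geq\; \frac{4}{(2\reg \lambtil + \mu^\star)^2} \sum_{j \in \calI} \abs{\dotprod{z}{q_j}}^2,
\end{equation*}
which rearranges to $(2\reg \lambtil + \mu^\star)^2 \geq \tfrac{4}{n} \sum_{j \in \calI} \abs{\dotprod{z}{q_j}}^2$. Taking square roots — which is legitimate because the hypothesis $\tfrac{1}{\sqrt{n}}(\sum_{j \in \calI}\abs{\dotprod{z}{q_j}}^2)^{1/2} > \reg \lambtil$ guarantees that the RHS already exceeds $(2\reg \lambtil)^2$, so the unique nonnegative root gives the bound with the correct sign — and solving for $\mu^\star$ yields the stated lower bound.

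The argument is essentially a diagonalization calculation, so there is no real obstacle; the only subtlety is verifying the sign of the root when taking square roots in the last step, which is precisely why the hypothesis on $\lambtil$ is imposed. Everything else reduces to the norm identity $\norm{z}_2^2 = n$ (coming from $z \in \calC_n$) and Parseval in the eigenbasis of $L$.
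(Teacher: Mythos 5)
Your proof is correct and follows essentially the same route as the paper's: both expand the norm constraint $\norm{\gest}_2^2 = n$ in the eigenbasis of $L$ and deduce the two bounds on $\mu^\star$ by restricting the sum or bounding the denominators, the only cosmetic difference being that you argue $\mu^\star > 0$ via the range/nullspace dichotomy while the paper observes that $\mu = 0$ is a pole of $\phi(\mu) = \norm{2(2\reg L + \mu I)^{-1}z}_2^2$. One minor remark: the square-root step in the lower bound is always valid — from $a^2 \geq b^2$ with $a, b \geq 0$ one always gets $a \geq b$ — so the hypothesis on $\lambtil$ is not needed to justify taking the root but only to make the resulting lower bound on $\mu^\star$ positive and hence non-vacuous.
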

\begin{proof}
Let us denote
\begin{align} \label{eq:phimu}
    \phi(\mu) := \norm{2(2\reg L +  \mu I)^{-1} z}_2^2 = 4\sum_{j=1}^{n} \frac{\abs{\dotprod{z}{q_j}}^2}{(2 \reg\lambda_j + \mu)^2}.
\end{align}
If $z \not\perp$ $\calN(L)$ then $0$ is a pole of $\phi$. Hence there exists a unique $\mu^{\star} \in (0,\infty)$ such that $\gest = 2(2 \reg L + \mu^{\star} I)^{-1} z$ is a (unique) solution of \eqref{prog:trs} as it satisfies the conditions in Lemma \ref{lemma:trs_sol_charac}. Since $n = \norm{\gest}_2^2  \leq \frac{4n}{(\mu^{\star})^2}$, we obtain $\mu^{\star} \leq 2$. To obtain the lower bound on $\mu^{\star}$, note that  \eqref{eq:phimu}  implies 
\begin{equation*}
    \norm{\gest}_2^2 = n = \norm{2(2\reg L +  \mu^{\star} I)^{-1} z}_2^2 \geq \frac{ \sum_{\lambda_j \leq \lambtil} \abs{\dotprod{z}{q_j}}^2}{(\reg \lambtil + \frac{\mu^{\star}}{2})^2}
\end{equation*}
which leads to the stated lower bound on $\mu^{\star}$.
\end{proof}
This is an important Lemma since localizing the value of $\mu^{\star}$ will be key for controlling the $\ell_2$ error bound for \eqref{prog:trs}. Observe that $\lambtil = 0$ always satisfies the conditions of the Lemma since $\abs{\dotprod{z}{q_n}} > 0$.

Next, we bound the the error between $\frac{\gest}{\abs{\gest}}$ and $h$ \emph{for any} $z \in \calC_n$ such that $z \not\perp \calN(L)$.
\begin{lemma} \label{lem:trs_err_bd_det}
For any $z \in \calC_n$ such that $z \not\perp$ $\calN(L)$, and $\lambbar \in [\lambmin, \lambda_1]$, the (unique) solution $\gest = 2(2 \reg L + \mu^{\star} I)^{-1} z$ of \eqref{prog:trs} satisfies the bound
\begin{align*}
 \norm{\frac{\gest}{\abs{\gest}} - h}_2^2 \leq \frac{32}{(\mu^{\star})^2} (E_1 + E_2) + 8 \left(\frac{2}{\mu^{\star}} - 1 \right)^2 \left(   \abs{\dotprod{h}{q_n}}^2 + \frac{\sum_{j \in \calL_{\lambbar}} \abs{\dotprod{h}{q_j}}^2}{(1+\reg \lambmin)^2} 
    + \frac{\smooth_n}{\lambbar (1+\reg \lambbar)^2}  \right)
\end{align*}
with $E_1, E_2$ as defined in Lemma \ref{lem:ucqp_lem_err_1}, and $\mu^{\star} \in (0,2]$.
\end{lemma}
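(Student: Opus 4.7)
The plan is to mimic the proof of Lemma \ref{lem:ucqp_lem_err_1} with the operator $(\reg L + I)^{-1}$ replaced by $(\reg L + \frac{\mu^{\star}}{2} I)^{-1}$, and to carefully track the extra factors that this substitution introduces. By Lemma \ref{lemma:useful_trs_sol}, the unique solution is $\gest = (\reg L + \frac{\mu^{\star}}{2} I)^{-1} z$ with $\mu^{\star} \in (0, 2]$. Using $\mproj{\gest} = \mproj{\expp{2}\gest}$ together with Proposition \ref{prop:entry_proj}, I reduce to bounding $\norm{\expp{2}\gest - h}_2^2$, and I write the decomposition
\begin{equation*}
\expp{2}\gest - h = \underbrace{(\reg L + \tfrac{\mu^{\star}}{2} I)^{-1}(\expp{2} z - h)}_{e_1} + \underbrace{(\reg L + \tfrac{\mu^{\star}}{2} I)^{-1} h - h}_{e_2}.
\end{equation*}

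The linchpin of the analysis is the elementary inequality $\reg \lambda_j + \frac{\mu^{\star}}{2} \geq \frac{\mu^{\star}}{2}(1 + \reg \lambda_j)$, which holds for every $j$ precisely because $\mu^{\star} \leq 2$. This lets me convert the denominators that naturally appear for $\gest$ (involving $\frac{\mu^{\star}}{2}$) into the UCQP-style denominators $(1 + \reg \lambda_j)$ at the cost of a factor $\frac{4}{(\mu^{\star})^2}$. Expanding $e_1$ in the eigenbasis of $L$ and splitting the sum over $\set{n}$, $\calL_{\lambbar}$, and $\calH_{\lambbar}$ exactly as in Lemma \ref{lem:ucqp_lem_err_1} then gives $\norm{e_1}_2^2 \leq \frac{4 E_1}{(\mu^{\star})^2}$.

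The trickier term is $e_2$; its $j$-th coordinate equals $\bigl[\frac{1}{\reg \lambda_j + \mu^{\star}/2} - 1\bigr]\dotprod{h}{q_j} = \frac{(1 - \mu^{\star}/2) - \reg \lambda_j}{\reg \lambda_j + \mu^{\star}/2}\dotprod{h}{q_j}$. For $j=n$ this is exactly $(\frac{2}{\mu^{\star}} - 1)\dotprod{h}{q_n}$, producing the isolated $\abs{\dotprod{h}{q_n}}^2$ contribution in the claimed bound. For $j \in [n-1]$, both $1 - \mu^{\star}/2$ and $\reg \lambda_j$ are non-negative, so I can invoke the \emph{sharp} form $(a-b)^2 \leq a^2 + b^2$ (valid for $a,b \geq 0$, crucially without the usual factor of $2$) to split $\abs{(e_2)_j}^2$ into two pieces. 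The piece proportional to $(1-\mu^{\star}/2)^2$ is handled like $e_1$, yielding $(\frac{2}{\mu^{\star}} - 1)^2$ times the $\calL_{\lambbar}$-plus-$\calH_{\lambbar}$ contribution to $S$ after invoking \eqref{eq:smooth_res1}; the piece proportional to $(\reg \lambda_j)^2/(\reg \lambda_j + \mu^{\star}/2)^2$ is handled like $e_{2}^{\text{UCQP}}$, and using \eqref{eq:smooth_res2} gives $\frac{4 E_2}{(\mu^{\star})^2}$. Together, $\norm{e_2}_2^2 \leq (\frac{2}{\mu^{\star}} - 1)^2 S + \frac{4 E_2}{(\mu^{\star})^2}$ where $S$ is exactly the parenthesized expression in the statement.

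The conclusion follows by combining $\norm{\expp{2}\gest - h}_2^2 \leq 2(\norm{e_1}_2^2 + \norm{e_2}_2^2)$ with the factor $4$ from Proposition \ref{prop:entry_proj}. The main subtlety I expect is the $e_2$ split: using the standard $(a-b)^2 \leq 2(a^2 + b^2)$ instead of its sharp non-negative version would double the $E_2$ coefficient and prevent the clean consolidation into the single factor $\frac{32(E_1 + E_2)}{(\mu^{\star})^2}$. Everything else is bookkeeping, and a useful sanity check is that at $\mu^{\star} = 2$ the second term vanishes and the bound reduces to the UCQP bound $8(E_1 + E_2)$, consistent with the fact that $\gest$ then coincides with the UCQP estimator.
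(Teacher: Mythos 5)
Your proof is correct and follows essentially the same route as the paper: the identical decomposition $\expp{2}\gest - h = e_1 + e_2$, the same conversion of $(\reg\lambda_j + \mu^{\star}/2)$ into $\frac{\mu^{\star}}{2}(1 + \reg\lambda_j)$ via $\mu^{\star} \leq 2$ to reach the $E_1$ bound, and the same sharp inequality $(a-b)^2 \leq a^2 + b^2$ for $a,b \geq 0$ to split $e_2$; the paper's algebra is phrased with the factored-out resolvent $\frac{2}{\mu^{\star}}(I + \frac{2\reg}{\mu^{\star}}L)^{-1}$ but, as you implicitly note, this is the same operator and the two accountings agree term by term. Your observation that the weaker $(a-b)^2 \leq 2(a^2+b^2)$ would spoil the consolidation of the $E_2$ coefficient, and the sanity check at $\mu^{\star}=2$ recovering Lemma \ref{lem:ucqp_lem_err_1}, are both apt.
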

\begin{proof}
The proof is along the lines of Lemma \ref{lem:ucqp_lem_err_1} with only few technical differences. Firstly, we can write
\begin{equation*}
    \expp{2} \gest - h = \underbrace{\frac{2}{\mu^{\star}} \left(I + \frac{2\reg}{\mu^{\star}} L \right)^{-1} (\expp{2} z - h)}_{e_1} + \underbrace{\frac{2}{\mu^{\star}} \left(I + \frac{2\reg}{\mu^{\star}} L \right)^{-1}h - h}_{e_2}
\end{equation*}
which in conjunction with Proposition \ref{prop:entry_proj} leads to the bound
\begin{equation*}
    \norm{\mproj{\gest} - h}_2^2 = 
    \norm{\mproj{\expp{2} \gest} - h}_2^2 \leq 8(\norm{e_1}_2^2 + \norm{e_2}_2^2).
\end{equation*}
Proceeding identically to the proof of Lemma \ref{lem:ucqp_lem_err_1}, it is easy to verify that $\norm{e_1}_2^2 \leq \frac{4}{(\mu^{\star})^2} E_1$. 
%
%\begin{equation*}
%    \norm{e_1}_2^2 \leq \frac{4}{(\mu^{\star})^2} \left(\abs{\dotprod{q_n}{\expp{2}z-h}}^2 +  \frac{1}{(1 + \reg \lambmin)^2} \sum_{j \in \calL_{\lambbar}} \abs{\dotprod{q_j}{\expp{2}z - h}}^2 + \frac{\norm{\expp{2}z - h}_2^2}{(1+\reg\lambbar)^2} \right).
%\end{equation*}
%
In order to bound $\norm{e_2}_2^2$, we begin by expanding $e_2$ as 
\begin{equation*}
e_2 = \frac{2}{\mu^{\star}} \left(I + \frac{2\reg}{\mu^{\star}} L \right)^{-1}h - h 
= \left(\frac{2}{\mu^{\star}} - 1 \right) \dotprod{q_n}{h} q_n  
+  \sum_{j=1}^{n-1} \left(\frac{\frac{2}{\mu^{\star}}}{1 + \frac{2}{\mu^{\star}}\reg \lambda_j} - 1 \right) \dotprod{q_j}{h} q_j.    
\end{equation*}
Then using the orthonormality of $q_j$'s,  we can bound $\norm{e_2}_2^2$ as follows.
\begin{align*}
    \norm{e_2}_2^2 
    &\leq \left(\frac{2}{\mu^{\star}} - 1 \right)^2 \abs{\dotprod{h}{q_n}}^2 +  \sum_{j=1}^{n-1} \left(\frac{(\frac{2}{\mu^{\star}} - 1)^2 + \frac{4\reg^2 \lambda_j^2}{(\mu^{\star})^2}}{(1 + \frac{2}{\mu^{\star}}\reg \lambda_j)^2}   \right) \abs{\dotprod{h}{q_j}}^2 \quad (\text{using } (a-b)^2 \leq a^2 + b^2 \text{ for } a,b \geq 0 ) \\
    &\leq \left(\frac{2}{\mu^{\star}} - 1 \right)^2 \abs{\dotprod{h}{q_n}}^2 +  \sum_{j =1}^{n-1}  \left(\frac{(\frac{2}{\mu^{\star}} - 1)^2 + \frac{4\reg^2 \lambda_j^2}{(\mu^{\star})^2}}{(1 + \reg \lambda_j)^2}   \right) \abs{\dotprod{h}{q_j}}^2 \quad (\text{using } \mu^{\star} \leq 2) \\
    %
    %&\leq \left(\frac{2}{\mu^{\star}} - 1 \right)^2 \abs{\dotprod{h}{q_n}}^2 
    %+ \frac{1}{(1 + \reg \lambmin)^2} \sum_{j \in \calL_{\lambbar}}  \left(\left(\frac{2}{\mu^{\star}} - 1 \right)^2 + \frac{4\reg^2 \lambda_j^2}{(\mu^{\star})^2}    \right) \abs{\dotprod{h}{q_j}}^2  \\
    %&+ \frac{1}{(1 + \reg \lambbar)^2} \sum_{j \in \calH_{\lambbar}}  \left( \left(\frac{2}{\mu^{\star}} - 1 \right)^2 + \frac{4\reg^2 \lambda_j^2}{(\mu^{\star})^2}\right) \abs{\dotprod{h}{q_j}}^2 \\
    %
    &\leq \left(\frac{2}{\mu^{\star}} - 1 \right)^2 \left( \abs{\dotprod{h}{q_n}}^2 + \frac{ \sum_{j \in \calL_{\lambbar}} \abs{\dotprod{h}{q_j}}^2}{(1+\reg \lambmin)^2} + \frac{ \sum_{j \in \calH_{\lambbar}} \abs{\dotprod{h}{q_j}}^2}{(1+\reg \lambbar)^2} \right) + \frac{4\reg^2}{(\mu^{\star})^2 (1+\reg \lambmin)^2} \sum_{j=1}^{n-1} \lambda_j^2 \abs{\dotprod{h}{q_j}}^2 \\
    &\leq \left(\frac{2}{\mu^{\star}} - 1 \right)^2 \left( \abs{\dotprod{h}{q_n}}^2 + \frac{ \sum_{j \in \calL_{\lambbar}} \abs{\dotprod{h}{q_j}}^2}{(1+\reg \lambmin)^2} 
    + \frac{ \smooth_n}{\lambbar (1+\reg \lambbar)^2} \right) + \frac{4}{(\mu^{\star})^2} E_2,
\end{align*}
where in the last inequality, we used \eqref{eq:smooth_res1},\eqref{eq:smooth_res2}.
\end{proof}
When $z \in \calC_n$ is generated as in \eqref{eq:noise_mod}, the following Lemma presents a (high probability) lower bound  on $\mu^{\star}$ provided $\sigma$ is small and $h$ is sufficiently smooth.
%-----------------------------------------------------------
% Lemma for lower bounding \mu^{\star} with high probability
%
\begin{lemma} \label{lem:lowbd_mu_prob}
Let $z \in \calC_n$ be generated as in \eqref{eq:noise_mod}, then the solution of $\eqref{prog:trs}$ is unique. Moreover, suppose that for any given $k \in [n-1]$ s.t $\lambda_{n-k+1} < \lambda_{n-k}$, the following holds.
\begin{equation} \label{eq:conds_lowbd_mu}
    \frac{\smooth_n}{\lambda_{n-k}} \leq \frac{n}{12}, \quad \sigma^2 \leq \frac{1}{48\pi^2}, \quad \frac{24760\log n}{\sqrt{n}} \leq \frac{1}{12} \quad \text{ and }  \reg \lambda_{n-k+1} \leq \frac{1}{4}.
\end{equation}
%
%\begin{enumerate} [label=\upshape(\roman*)]
%    \item  $\frac{\smooth_n}{\lambda_{n-k}} \leq n \alpha_n$.
%    \item  $\sigma^2 \leq \frac{\beta_n}{8\pi^2}$.
%    \item  $\alpha_n + \frac{\beta_n}{2} + \frac{24760\log n}{\sqrt{n}} + \reg \lambda_{n-k+1} \leq \frac{1}{2}$. 
%\end{enumerate}
%
Then with probability at least $1 - \frac{4}{n^2}$, we have that 
\begin{equation*}
 \frac{\mu^{\star}}{2} \geq 1 - \left(\frac{\smooth_n}{n \lambda_{n-k}} + 4\pi^2\sigma^2 + \frac{24760 \log n}{\sqrt{n}} + \reg \lambda_{n-k+1} \right).    
\end{equation*}
\end{lemma}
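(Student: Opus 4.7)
The natural move is to apply Lemma \ref{lemma:useful_trs_sol} with the threshold $\lambtil := \lambda_{n-k+1}$. Under the hypothesis $\lambda_{n-k+1} < \lambda_{n-k}$, the index set $\set{j:\lambda_j \leq \lambtil}$ coincides with $\set{n-k+1,\dots,n}$, so a lower bound on $\mu^{\star}/2$ reduces to a lower bound on $\tfrac{1}{\sqrt n}\bigl(\sum_{j=n-k+1}^n \abs{\dotprod{z}{q_j}}^2\bigr)^{1/2}$ minus $\reg\lambda_{n-k+1}$. Uniqueness of the solution is immediate: $z$ has a continuous distribution so almost surely $\dotprod{z}{q_n} \neq 0$, i.e.\ $z \not\perp \calN(L)$, and Lemma \ref{lemma:useful_trs_sol} then applies.

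\textbf{Step 1 (set up the quadratic form).} Let $U \in \matR^{n \times k}$ be the matrix whose columns are $q_{n-k+1},\dots,q_n$, so that $\sum_{j:\lambda_j \leq \lambtil}\abs{\dotprod{z}{q_j}}^2 = z^* U U^T z$. Then $h^* U U^T h = \sum_{j=n-k+1}^n \abs{\dotprod{h}{q_j}}^2$, and invoking \eqref{eq:smooth_res1} at $\lambda = \lambda_{n-k}$ (which is permissible since $\lambda_{n-k}$ is an eigenvalue of $L$ and $\calH_{\lambda_{n-k}} = \set{1,\dots,n-k}$) gives $h^* U U^T h \geq n - \smooth_n/\lambda_{n-k}$. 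I then apply Proposition \ref{prop:conc_bounds}\ref{prop:conc_bds_1}: with probability at least $1-4/n^2$,
\begin{equation*}
z^* U U^T z \; \geq \; k(1-\expn{4}) + \expn{4}\left(n - \frac{\smooth_n}{\lambda_{n-k}}\right) - \mathcal{E},
\end{equation*}
where $\mathcal{E}$ collects the residual terms. Bounding crudely $\norm{U U^T h}_{\infty} \leq \norm{U U^T h}_2 \leq \norm{h}_2 = \sqrt n$, using $1-\expn{8} \leq 8\pi^2\sigma^2$ and $k \leq n$, and imposing $\sigma^2 \leq 1/(48\pi^2)$ to keep $\sqrt{1-\expn{8}} \leq 1/2$, each of the three summands in $\mathcal{E}$ is controlled by a constant multiple of $\sqrt n\,\log n$; summing constants yields $\mathcal{E} \leq 24760\,\sqrt n\,\log n$.

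\textbf{Step 2 (algebraic simplification).} Dropping the non-negative $k(1-\expn{4})$ term, using $\expn{4} \geq 1 - 4\pi^2\sigma^2$, and the elementary $(1-a)(1-b) \geq 1-a-b$ for $a,b \geq 0$, the previous display simplifies to
\begin{equation*}
\frac{z^* U U^T z}{n} \; \geq \; 1 - \frac{\smooth_n}{n\lambda_{n-k}} - 4\pi^2\sigma^2 - \frac{24760\log n}{\sqrt n}.
\end{equation*}
The four hypotheses in \eqref{eq:conds_lowbd_mu} are designed precisely so each of the four subtracted quantities is at most $1/12$ (respectively $1/4$ for $\reg\lambda_{n-k+1}$), so the right-hand side lies in $[1/4, 1]$. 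Since $\sqrt{1-x} \geq 1-x$ on $[0,1]$, taking square roots preserves the inequality and gives
\begin{equation*}
\frac{1}{\sqrt n}\sqrt{z^* U U^T z} \; \geq \; 1 - \frac{\smooth_n}{n\lambda_{n-k}} - 4\pi^2\sigma^2 - \frac{24760\log n}{\sqrt n}.
\end{equation*}
In particular this exceeds $\reg\lambda_{n-k+1} \leq 1/4$, so the hypothesis of Lemma \ref{lemma:useful_trs_sol} is satisfied. Plugging into its conclusion and subtracting $\reg\lambda_{n-k+1}$ yields the stated bound on $\mu^{\star}/2$.

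\textbf{Anticipated obstacle.} The only real work lies in Step 1: Proposition \ref{prop:conc_bounds}\ref{prop:conc_bds_1} produces three distinct correction terms scaling as $\log n$, $\sqrt{k\log n}$, and $\log n\cdot \sqrt n$ (up to $\sigma$-dependent factors), and the hypotheses in \eqref{eq:conds_lowbd_mu} are calibrated so that all three combine into the single $24760 \log n/\sqrt n$ constant. This bookkeeping is routine but delicate; no new conceptual input is required, and once $\mathcal{E}$ is controlled, the concavity-of-$\sqrt{\cdot}$ step and the localisation of $\mu^{\star}$ via Lemma \ref{lemma:useful_trs_sol} conclude the proof immediately.
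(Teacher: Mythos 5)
Your proposal is correct and follows essentially the same route as the paper: set $\lambtil = \lambda_{n-k+1}$ in Lemma \ref{lemma:useful_trs_sol}, concentrate $z^* U U^T z$ via Proposition \ref{prop:conc_bounds}\ref{prop:conc_bds_1} together with the bias bound $h^* U U^T h \ge n - \smooth_n/\lambda_{n-k}$ from \eqref{eq:smooth_res1}, collect the residual into a single $24760\sqrt{n}\log n$ term, and conclude via $\sqrt{1-x} \ge 1-x$ for $x \in [0,1]$. The only difference is cosmetic bookkeeping: you fix $\sqrt{1-\expn{8}} \le 1/2$ outright, while the paper retains the $\sigma$-dependent bound $\sqrt{1-\expn{8}} \le 2\sqrt{2}\pi\sigma$ and later uses $\sigma \le 1$, but both routes arrive at the same constant.
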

\begin{proof}
We will lower bound the lower bound estimate of $\mu^{\star}$ from Lemma \ref{lemma:useful_trs_sol} using Proposition \ref{prop:conc_bounds}\ref{prop:conc_bds_1}. Note that $z \in \calC_n$ satisfies $z \not\perp \calN(L)$ a.s. Set $\lambtil = \lambda_{n-k+1}$ in Lemma \ref{lemma:useful_trs_sol}, and let $U$ denote the $n \times k$ matrix consisting of $q_j$'s for $n-k+1 \leq j \leq n$. 

Let us first simplify the statement of Proposition \ref{prop:conc_bounds}\ref{prop:conc_bds_1} when $\sigma^2 \leq \frac{1}{8\pi^2}$. Recall from the proof of Lemma \ref{lem:lem:simp_conc} that this implies $1-\expn{8} \in [4 \pi^2 \sigma^2 , 8\pi^2 \sigma^2]$. Then the (magnitude of the) RHS of the bound in Proposition \ref{prop:conc_bounds}\ref{prop:conc_bds_1} can be upper bounded as
\begin{align}
    & 4096 \log n +256 \sqrt{6} \pi^2 \sigma^2 \sqrt{k \log n}     
     + 11 \log n \left(\norm{U U^T h}_{\infty} + 2\sqrt{2} \pi \sigma \underbrace{\norm{U U^T h}_2}_{\leq \sqrt{n}} \right) \nonumber \\
&\leq 6190 (\log n + \sigma^2 \sqrt{k \log n} + \log n(\norm{UU^T h}_{\infty} + \sigma \sqrt{n})) \nonumber \\
&\leq 6190 \log n (1 + \norm{U U^T h}_{\infty} + 2\sigma \sqrt{n}) \label{eq:imp_up_bound}
\end{align}
where the last inequality uses $\sigma^2 \leq \sigma$ and $k \leq n$. 

Plugging \eqref{eq:imp_up_bound} in Proposition \ref{prop:conc_bounds} \ref{prop:conc_bds_1}, we conclude that with probability at least $1-\frac{4}{n^2}$, 
\begin{align*}
    \sum_{j:\lambda_j \leq \lambda_{n-k+1}} \abs{\dotprod{z}{q_j}}^2 
    &\geq 2\pi^2 \sigma^2 k + (1-4\pi^2\sigma^2) h^* UU^T h - \underbrace{6190 \log n (1 + \norm{UU^T h}_{\infty} + 2\sigma \sqrt{n})}_{\leq 24760 \sqrt{n} \log n} \\
    &\geq (1-4\pi^2\sigma^2) h^* UU^T h - 24760 \sqrt{n} \log n \\
    &\geq (1-4\pi^2\sigma^2) \left(n - \frac{\smooth_n}{\lambda_{n-k}} \right) - 24760 \sqrt{n} \log n \quad \text{ ($h^*UU^T h \geq n - \frac{\smooth_n}{\lambda_{n-k}}$, see \eqref{eq:smooth_res1})} \\
    &\geq n - \frac{\smooth_n}{\lambda_{n-k}} - 4\pi^2\sigma^2 n - 24760 \sqrt{n} \log n  \\ %\quad \text{(using conditions \ref{lem:lowbd_mu_1},\ref{lem:lowbd_mu_2})} \\
    &= n \left(1 - \frac{\smooth_n}{n \lambda_{n-k}} - 4\pi^2\sigma^2 - \frac{24760 \log n}{\sqrt{n}} \right). 
\end{align*}
Using \eqref{eq:conds_lowbd_mu}, we have $\frac{\smooth_n}{n \lambda_{n-k}} + 4\pi^2\sigma^2 + \frac{24760 \log n}{\sqrt{n}} \leq \frac{1}{4}$. This leads to the bound 
\begin{equation*}
    \left(\frac{1}{n} \sum_{j:\lambda_j \leq \lambda_{n-k+1}} \abs{\dotprod{z}{q_j}}^2 \right)^{1/2} \geq 1 - \frac{\smooth_n}{n \lambda_{n-k}} - 4\pi^2\sigma^2 - \frac{24760 \log n}{\sqrt{n}}
\end{equation*}
which in conjunction with Lemma \ref{lemma:useful_trs_sol} leads to the stated bound on $\mu^{\star}$. In particular, the conditions in \eqref{eq:conds_lowbd_mu} ensure $\mu^{\star} \geq 1$.
\end{proof}
Using Lemmas \ref{lem:trs_err_bd_det} and \ref{lem:lowbd_mu_prob}, we arrive at the following (high probability) bound on the error $\norm{\frac{\gest}{\abs{\gest}} - h}_2^2$ for the solution $\gest$ of $\eqref{prog:trs}$.
%-------------------------------------------------------
% Theorem for error bound for TRS
%-------------------------------------------------------
\begin{theorem} \label{thm:trs_err_bd_prob}
Let $z \in \calC_n$ be generated as in \eqref{eq:noise_mod}, then the solution $\gest$ of \eqref{prog:trs} is unique. For any given $k \in [n-1]$ s.t $\lambda_{n-k+1} < \lambda_{n-k}$, and any $\lambbar \in [\lambmin, \lambda_1]$ with the choice $\reg = (\frac{4\pi^2 \sigma^2 n}{\triangle \smooth_n \lambbar^2})^{1/4}$, suppose that the following conditions are satisfied.
\begin{enumerate} [label=\upshape(\roman*)]
    \item\label{lem:trs_err_bd_cond1} $\smooth_n \leq  \min\set{\frac{n \lambda_{n-k}}{12}, \frac{n \lambbar}{2}}$,  and
    
    \item\label{lem:trs_err_bd_cond2} $286 \left(\frac{\log n}{\sqrt{n}} \right)^{1/2} \leq \sigma \leq \min\set{\frac{1}{4\sqrt{3} \pi},\frac{\lambbar}{16 \lambda_{n-k+1}^2} \sqrt{\frac{\triangle \smooth_n}{4\pi^2 n}}}$. 
\end{enumerate}
Then with probability at least $1 - \frac{8}{n^2}$, the solution $\gest \in \mathbb{C}^n$ of \eqref{prog:trs} satisfies
\begin{align} \label{eq:trs_prob_err_bd}
    \norm{\frac{\gest}{\abs{\gest}} - h}_2^2 
    &\leq C_1 \frac{\sigma}{\lambbar} \left( \sqrt{\triangle \smooth_n n} +  \frac{n^{3/2} \lambda_{n-k+1}^2}{\sqrt{\triangle \smooth_n}}  \right)  
    + C_2 \sigma^2(1 + \abs{\calL_{\lambbar}} + \sqrt{(1 + \abs{\calL_{\lambbar}}) \log n}) \\ \nonumber 
    &+ C_3 \sigma^4 n + C_4 \log n + C_5 \frac{\smooth_n^2}{n \lambda_{n-k}^2}
\end{align}
where $C_1 = 288\pi$, $C_2 = 396160$, $C_3 = 230400$, $C_4 = 262144$, $C_5 = 144$.
\end{theorem}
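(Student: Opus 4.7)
The proof plan is to combine the deterministic error bound from Lemma \ref{lem:trs_err_bd_det} with the high-probability lower bound on $\mu^{\star}$ from Lemma \ref{lem:lowbd_mu_prob} and the concentration bounds in Lemma \ref{lem:lem:simp_conc}. First, I would verify that all four preconditions in \eqref{eq:conds_lowbd_mu} of Lemma \ref{lem:lowbd_mu_prob} are implied by assumptions \ref{lem:trs_err_bd_cond1}, \ref{lem:trs_err_bd_cond2} of the theorem together with the prescribed choice of $\reg$. The smoothness hypothesis $\smooth_n \leq n \lambda_{n-k}/12$ is immediate from \ref{lem:trs_err_bd_cond1}; the bound $\sigma^2 \leq 1/(48\pi^2)$ follows from $\sigma \leq 1/(4\sqrt{3}\pi)$; the inequality $24760 \log n /\sqrt{n} \leq 1/12$ follows from the lower bound $\sigma \gtrsim (\log n/\sqrt{n})^{1/2}$ combined with the upper bound on $\sigma$; and crucially $\reg \lambda_{n-k+1} \leq 1/4$ follows from the upper bound $\sigma \leq \frac{\lambbar}{16\lambda_{n-k+1}^2}\sqrt{\triangle \smooth_n/(4\pi^2 n)}$ after substituting $\reg = (4\pi^2 \sigma^2 n/(\triangle \smooth_n \lambbar^2))^{1/4}$. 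This yields $\mu^{\star}/2 \geq 1 - \delta$ with probability at least $1-4/n^2$, where $\delta = \frac{\smooth_n}{n\lambda_{n-k}} + 4\pi^2\sigma^2 + \frac{24760 \log n}{\sqrt{n}} + \reg \lambda_{n-k+1}$, and in particular $\mu^{\star} \geq 1$.

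Next, I would apply Lemma \ref{lem:trs_err_bd_det} and split the analysis into two blocks: a variance block $\frac{32}{(\mu^{\star})^2}(E_1 + E_2)$ and a bias block $8(\frac{2}{\mu^{\star}} - 1)^2 \cdot \textit{(bias terms)}$. Using $\mu^{\star} \geq 1$ gives $32/(\mu^{\star})^2 \leq 32$. For $E_1$, rewriting $\expp{2}z - h = \expp{2}(z - \zbar)$ and collecting the contributions from $q_n$ and $\calL_{\lambbar}$ into a single $n \times (1+\abs{\calL_{\lambbar}})$ matrix $U$ with orthonormal columns, Lemma \ref{lem:lem:simp_conc}\ref{lem:simp_conc_item1} controls $(z-\zbar)^* UU^T (z-\zbar)$, while Lemma \ref{lem:lem:simp_conc}\ref{lem:simp_conc_item2} controls $\norm{z-\zbar}_2^2$ in the $(1+\reg\lambbar)^{-2}$ term. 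Using $\expp{4} \leq 2$ and $1+\reg\lambbar \geq \reg \lambbar$, this yields
\begin{equation*}
E_1 + E_2 \lesssim \sigma^2 \bigl(1 + \abs{\calL_{\lambbar}} + \sqrt{(1+\abs{\calL_{\lambbar}})\log n}\bigr) + \log n + \frac{\sigma^2 n}{(\reg\lambbar)^2} + \reg^2 \triangle \smooth_n .
\end{equation*}
Substituting the stated choice of $\reg$ makes the last two terms equal up to constants and balances them as $\frac{\sigma \sqrt{\triangle \smooth_n n}}{\lambbar}$, which yields the $C_1$ term's first summand, the $C_2$ term, and the $C_4 \log n$ contribution.

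For the bias block, I would trivially bound $\abs{\dotprod{h}{q_n}}^2 \leq n$ and $\sum_{j \in \calL_{\lambbar}} \abs{\dotprod{h}{q_j}}^2 \leq n$; then assumption \ref{lem:trs_err_bd_cond1} gives $\smooth_n/\lambbar \leq n/2$, so the full parenthesized bias factor is at most a constant multiple of $n$. Using $\mu^{\star} \geq 2(1-\delta)$ with $\delta \leq 1/3$ (ensured by the four conditions of Lemma \ref{lem:lowbd_mu_prob}), I get $(\frac{2}{\mu^{\star}} - 1)^2 \leq (3\delta/2)^2 \lesssim \sum_i \delta_i^2$. Multiplying by $n$ and expanding gives contributions $\frac{\smooth_n^2}{n\lambda_{n-k}^2}$ (the $C_5$ term), $\sigma^4 n$ (part of $C_3$), $\log^2 n$, and $n \reg^2 \lambda_{n-k+1}^2$. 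Substituting the choice of $\reg$ converts the last into $\frac{\sigma n^{3/2} \lambda_{n-k+1}^2}{\lambbar \sqrt{\triangle \smooth_n}}$, which is precisely the second summand of the $C_1$ term. The $\log^2 n$ residue is absorbed into $\sigma^4 n$ using the standing lower bound $\sigma \gtrsim (\log n/\sqrt{n})^{1/2}$, which gives $\log^2 n \lesssim \sigma^4 n$.

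Finally, I would take a union bound over the three concentration events (Lemma \ref{lem:lowbd_mu_prob} costs $4/n^2$, and each of the two parts of Lemma \ref{lem:lem:simp_conc} invoked for $E_1$ costs $2/n^2$), yielding the stated failure probability $8/n^2$. The main obstacle in all of this is not any single bound but the careful bookkeeping: one must verify that the upper bound on $\sigma$ in \ref{lem:trs_err_bd_cond2} is precisely what forces $\reg \lambda_{n-k+1} \leq 1/4$ under the prescribed choice of $\reg$, and one must track how the ``bias amplification factor'' $(\frac{2}{\mu^{\star}} - 1)^2$ interacts with the trivial $\mathcal{O}(n)$ bound on the bias terms to produce the non-standard $C_3 \sigma^4 n$ and $C_5 \smooth_n^2/(n\lambda_{n-k}^2)$ contributions, which have no counterpart in the \eqref{prog:ucqp} analysis.
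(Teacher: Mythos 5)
Your proposal is correct and follows essentially the same route as the paper: combine Lemma \ref{lem:trs_err_bd_det} with the lower bound on $\mu^{\star}$ from Lemma \ref{lem:lowbd_mu_prob}, bound $32(E_1+E_2)$ via the concentration bounds already used for \eqref{prog:ucqp}, expand $(2/\mu^{\star}-1)^2 \lesssim \delta^2$ against the trivial $O(n)$ bias factor, and absorb the $\log$ residue using the lower bound on $\sigma$. One small bookkeeping inaccuracy: the four conditions \eqref{eq:conds_lowbd_mu} only give $\delta \leq 1/12+1/12+1/12+1/4 = 1/2$, not $\delta \leq 1/3$, but this does not affect your argument since $\frac{\delta}{1-\delta} \leq 2\delta$ still holds and the constants are absorbed into $\lesssim$.
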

\begin{proof}
We simply combine Lemmas \ref{lem:trs_err_bd_det} and \ref{lem:lowbd_mu_prob}. To this end, recall that the error bound in Theorem \ref{thm:ucqp_prob_bd} is a bound on the term $8(E_1 + E_2)$. This means that if $\frac{72\log n}{\pi \sqrt{n}} \leq \sigma \leq \frac{1}{2\sqrt{2} \pi}$, then for the stated choice of $\reg$, we have with probability at least $1-\frac{4}{n^2}$,
\begin{equation} \label{eq:first_err_term_trs_hp}
    32(E_1 + E_2) \leq 288 \pi \frac{\sigma}{\lambbar} \sqrt{\triangle \smooth_n n} + 396160 \sigma^2(1 + \abs{\calL_{\lambbar}} + (1 + \abs{\calL_{\lambbar}})\sqrt{\log n}) +  262144 \log n. 
\end{equation}
The conditions in Lemma \ref{lem:lowbd_mu_prob} imply in particular that $\mu^{\star} \geq 1$, hence  \eqref{eq:first_err_term_trs_hp} is a bound on $\frac{32(E_1 + E_2)}{\mu^{\star}}$. This takes care of the first error term in the bound in Lemma \ref{lem:trs_err_bd_det}.

In order to bound the second term therein, observe that if $\smooth_n \leq \frac{n \lambbar}{2}$, then 
\begin{equation*}
     \abs{\dotprod{h}{q_n}}^2 + \frac{\sum_{j \in \calL_{\lambbar}} \abs{\dotprod{h}{q_j}}^2}{(1+\reg \lambmin)^2} 
    + \frac{\smooth_n}{\lambbar (1+\reg \lambbar)^2} \leq n + \frac{n}{2} = \frac{3n}{2}.
\end{equation*}
Also, condition \ref{lem:trs_err_bd_cond2} for $\sigma$ implies $24760 \frac{\log n}{\sqrt{n}} \leq  \frac{3\sigma^2}{\pi^2} (\leq \frac{1}{12})$. Note that the requirement $\sigma \geq 286(\frac{\log n}{\sqrt{n}})^{1/2}$ is stricter than $\sigma \geq \frac{72 \log n}{\pi \sqrt{n}}$.  Given these observations, we can bound the second term in the bound of Lemma \ref{lem:trs_err_bd_det} as follows.
\begin{align}
    & 8 \left(\frac{2}{\mu^{\star}} - 1 \right)^2 \left(   \abs{\dotprod{h}{q_n}}^2 + \frac{\sum_{j \in \calL_{\lambbar}} \abs{\dotprod{h}{q_j}}^2}{(1+\reg \lambmin)^2} 
    + \frac{\smooth_n}{\lambbar (1+\reg \lambbar)^2}  \right) \nonumber \\
    &\leq 48n \left(\frac{\smooth_n}{n \lambda_{n-k}} + 4\pi^2\sigma^2 + \frac{24760 \log n}{\sqrt{n}} + \reg \lambda_{n-k+1} \right)^2 \nonumber \\
    &\leq    
     48n \left(\frac{\smooth_n}{n \lambda_{n-k}} + 4\pi^2\sigma^2 + \frac{3\sigma^2}{\pi^2} + \reg \lambda_{n-k+1} \right)^2 \nonumber \\
    &\leq 48n \left(\frac{\smooth_n}{n \lambda_{n-k}} + 40\sigma^2 +  \reg \lambda_{n-k+1} \right)^2 \nonumber \\
    &\leq 144 n \left( \frac{\smooth_n^2}{n^2 \lambda_{n-k}^2} + 1600 \sigma^4  
    + \left(\frac{4\pi^2 \sigma^2 n}{\triangle \smooth_n \lambbar^2} \right)^{1/2} \lambda_{n-k+1}^2 \right). \label{eq:sec_err_term_trs_hp}
\end{align}
Plugging \eqref{eq:first_err_term_trs_hp} and \eqref{eq:sec_err_term_trs_hp} in Lemma \ref{lem:trs_err_bd_det} then readily yields the stated bound in the Theorem. 
\end{proof}
The following Corollary provides a simplification of Theorem \ref{thm:trs_err_bd_prob} and is directly obtained by considering $k=1$ since $\lambda_n < \lambda_{n-1} = \lambmin$. 
%
%
%------------------------------------------------------
% Corollary of theorem for TRS error bound when k = 1
%
\begin{corollary} \label{cor:trs_high_prob_simp_err}
Let $z \in \calC_n$ be generated as in \eqref{eq:noise_mod}, then the solution $\gest$ of \eqref{prog:trs} is unique. For given $\lambbar \in [\lambmin, \lambda_1]$ with the choice $\reg = (\frac{4\pi^2 \sigma^2 n}{\triangle \smooth_n \lambbar^2})^{1/4}$, suppose that 
\begin{equation*}
    \smooth_n \leq  \frac{n \lambmin}{12} \text{  and  }  286 \left(\frac{\log n}{\sqrt{n}} \right)^{1/2} \leq \sigma \leq \frac{1}{4\sqrt{3} \pi}.
\end{equation*}
Then with probability at least $1 - \frac{8}{n^2}$, the solution $\gest \in \mathbb{C}^n$ of \eqref{prog:trs} satisfies
\begin{align*} 
    \norm{\frac{\gest}{\abs{\gest}} - h}_2^2 
    \leq C_1 \frac{\sigma}{\lambbar} \sqrt{\triangle \smooth_n n} +    
    + C_2 \sigma^2 \left(1 + \abs{\calL_{\lambbar}} + \sqrt{(1 + \abs{\calL_{\lambbar}}) \log n} \right)   
    + C_3 \sigma^4 n + C_4 \log n + C_5 \frac{\smooth_n^2}{n \lambmin^2}
\end{align*}
where the constants $C_1,\dots,C_5$ are as in Theorem \ref{thm:trs_err_bd_prob}.
\end{corollary}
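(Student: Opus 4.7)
The plan is to obtain this Corollary as a direct specialization of Theorem \ref{thm:trs_err_bd_prob} by choosing the index $k = 1$. Since $G$ is connected, $\lambda_n = 0 < \lambda_{n-1} = \lambmin$, so the eligibility condition $\lambda_{n-k+1} < \lambda_{n-k}$ on $k$ in Theorem \ref{thm:trs_err_bd_prob} is automatically satisfied for $k=1$. With this choice, $\lambda_{n-k+1} = 0$ and $\lambda_{n-k} = \lambmin$, which is exactly what appears on the right-hand side of the Corollary.

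Next, I would verify that the two hypotheses of the Corollary imply conditions \ref{lem:trs_err_bd_cond1} and \ref{lem:trs_err_bd_cond2} of Theorem \ref{thm:trs_err_bd_prob} under this choice of $k$. For \ref{lem:trs_err_bd_cond1}, condition \ref{lem:trs_err_bd_cond1} requires $\smooth_n \leq \min\{n\lambda_{n-k}/12,\ n\lambbar/2\} = \min\{n\lambmin/12,\ n\lambbar/2\}$. Because $\lambbar \in [\lambmin, \lambda_1]$ implies $n\lambbar/2 \geq n\lambmin/2 \geq n\lambmin/12$, this minimum equals $n\lambmin/12$, matching the Corollary's first hypothesis. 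For \ref{lem:trs_err_bd_cond2}, plugging $\lambda_{n-k+1}=0$ into the upper bound $\frac{\lambbar}{16\lambda_{n-k+1}^2}\sqrt{\triangle \smooth_n/(4\pi^2 n)}$ makes that term vacuous (formally $+\infty$), so the constraint collapses to $286(\log n/\sqrt{n})^{1/2} \leq \sigma \leq 1/(4\sqrt{3}\pi)$, which is precisely the Corollary's second hypothesis.

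With all hypotheses satisfied, I would invoke Theorem \ref{thm:trs_err_bd_prob} to conclude that with probability at least $1 - 8/n^2$, the error bound \eqref{eq:trs_prob_err_bd} holds. Substituting $\lambda_{n-k+1}=0$ kills the term $C_1 \frac{\sigma}{\lambbar} \cdot \frac{n^{3/2}\lambda_{n-k+1}^2}{\sqrt{\triangle\smooth_n}}$ entirely, and substituting $\lambda_{n-k}=\lambmin$ replaces the final term by $C_5 \smooth_n^2/(n\lambmin^2)$. The remaining terms $C_1 \frac{\sigma}{\lambbar}\sqrt{\triangle \smooth_n n}$, $C_2\sigma^2(1+|\calL_{\lambbar}|+\sqrt{(1+|\calL_{\lambbar}|)\log n})$, $C_3\sigma^4 n$, and $C_4 \log n$ pass through unchanged, yielding exactly the stated inequality with the same constants $C_1,\dots,C_5$.

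There is no real obstacle here; the Corollary is essentially a bookkeeping exercise recording what Theorem \ref{thm:trs_err_bd_prob} gives under its weakest admissible choice of $k$. The only thing worth double-checking is that the minimum in condition \ref{lem:trs_err_bd_cond1} genuinely reduces to $n\lambmin/12$ (as opposed to $n\lambbar/2$), which follows immediately from $\lambbar \geq \lambmin$. In particular, this specialization is meaningful precisely because the eligibility condition on $k$ in Theorem \ref{thm:trs_err_bd_prob} is guaranteed to hold for $k=1$ under the sole assumption that $G$ is connected, whereas larger $k$ generally requires the Laplacian spectrum to have a separated low-frequency block.
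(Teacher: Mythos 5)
Your proposal is correct and matches the paper's own route exactly: the paper explicitly states that Corollary \ref{cor:trs_high_prob_simp_err} is obtained from Theorem \ref{thm:trs_err_bd_prob} by taking $k=1$, using $\lambda_n = 0 < \lambda_{n-1} = \lambmin$. Your verification that the minimum in condition \ref{lem:trs_err_bd_cond1} reduces to $n\lambmin/12$ and that the $\lambda_{n-k+1}$-dependent terms vanish is the same bookkeeping the paper leaves implicit.
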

We are now in a position to derive conditions under which \eqref{prog:trs} provably denoises $z$ with high probability. We begin with the following Theorem which provides these conditions in their full generality.
%
%
%----------------------------------------------
% Main theorem for provably denoising by TRS
%----------------------------------------------
\begin{theorem} \label{thm:trs_prob_denoise}
Let $z \in \calC_n$ be generated as in \eqref{eq:noise_mod}, then the solution $\gest$ of \eqref{prog:trs} is unique. With constants $C_1, \dots, C_5$ as in Theorem \ref{thm:trs_err_bd_prob}, for any  $\varepsilon \in (0,1)$,  given $k \in [n-1]$ s.t $\lambda_{n-k+1} < \lambda_{n-k}$  and $\lambbar \in [\lambmin, \lambda_1]$ with the choice $\reg = (\frac{4\pi^2 \sigma^2 n}{\triangle \smooth_n \lambbar^2})^{1/4}$, suppose that the following conditions are satisfied.

\begin{enumerate}[label=\upshape(\roman*)]
    \item\label{lem:trs_denoise_cond1} $\smooth_n \leq  \min\set{\frac{n \lambda_{n-k}}{12}, \frac{n \lambbar}{2}}$,  and $1 + \abs{\calL_{\lambbar}} + \sqrt{(1 + \abs{\calL_{\lambbar}}) \log n} \leq \frac{\pi^2}{5 C_2}\varepsilon n$.
    
     \item\label{eq:trs_denoise_cond2}   $\sigma \leq
    \min\set{\frac{\pi \sqrt{\varepsilon}}{\sqrt{5 C_3}},\frac{\lambbar}{16 \lambda_{n-k+1}^2} \sqrt{\frac{\triangle \smooth_n}{4\pi^2 n}}}$ and 
    \begin{align*}
        \sigma \geq \max \left\{286 \left(\frac{\log n}{\sqrt{n}} \right)^{1/2}, \frac{\sqrt{5 C_5}}{\pi} \left(\frac{\smooth_n}{n \lambda_{n-k} \sqrt{\varepsilon}} \right), \sqrt{\left(\frac{5C_4}{\varepsilon \pi^2} \right) \frac{\log n}{n}}, \frac{5C_1}{\pi^2 \varepsilon \lambbar} \left(\sqrt{\frac{\triangle \smooth_n}{n}} + \lambda_{n-k+1}^2 \sqrt{\frac{n}{\triangle \smooth_n}} \right) \right\}. 
    \end{align*}
\end{enumerate}    
Then with probability at least $1 - \frac{10}{n^2}$, the solution $\gest \in \mathbb{C}^n$ of \eqref{prog:trs} satisfies 
\begin{equation} \label{eq:trs_eps_den_bd_prob}
    \norm{\frac{\gest}{\abs{\gest}} - h}_2^2 \leq \varepsilon \norm{z - h}_2^2.   
\end{equation}
\end{theorem}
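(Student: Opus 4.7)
The plan is to apply Theorem \ref{thm:trs_err_bd_prob} together with the high-probability lower bound $\norm{z-h}_2^2 \geq \pi^2 \sigma^2 n$ from Lemma \ref{lem:lem:simp_conc}\ref{lem:simp_conc_item3}, and then translate the conditions that force every term in the resulting upper bound on $\norm{\frac{\gest}{\abs{\gest}} - h}_2^2$ to be no larger than $\frac{\varepsilon \pi^2 \sigma^2 n}{5}$. First, I would verify that the hypotheses of Theorem \ref{thm:trs_err_bd_prob} hold: condition \ref{lem:trs_denoise_cond1} is exactly the smoothness assumption required by \ref{lem:trs_err_bd_cond1} there, and the bounds $286(\log n /\sqrt n)^{1/2} \leq \sigma$ and $\sigma \leq \min\{\frac{1}{4\sqrt{3}\pi}, \frac{\lambbar}{16 \lambda_{n-k+1}^2}\sqrt{\frac{\triangle \smooth_n}{4\pi^2 n}}\}$ are precisely enforced in \ref{eq:trs_denoise_cond2} (note $\frac{\pi \sqrt{\varepsilon}}{\sqrt{5C_3}} \leq \frac{1}{4\sqrt{3}\pi}$ for $\varepsilon \leq 1$ given $C_3 = 230400$). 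This gives \eqref{eq:trs_prob_err_bd} with probability at least $1 - 8/n^2$.

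Next, a second application of Lemma \ref{lem:lem:simp_conc}\ref{lem:simp_conc_item3} (valid since the noise lower bound $\sigma \gtrsim (\log n / \sqrt n)^{1/2}$ is stricter than $\frac{72 \log n}{\pi \sqrt n}$) yields $\norm{z-h}_2^2 \geq \pi^2 \sigma^2 n$ with probability at least $1 - 2/n^2$. A union bound then gives an overall success probability of $1 - 10/n^2$. On this event, $\eqref{eq:trs_eps_den_bd_prob}$ is ensured as soon as the RHS of \eqref{eq:trs_prob_err_bd} is bounded above by $\varepsilon \pi^2 \sigma^2 n$, which is in turn guaranteed if each of its five summands is at most $\frac{\varepsilon \pi^2 \sigma^2 n}{5}$.

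Now I would simply match each summand against a condition in the theorem statement. The term $C_3 \sigma^4 n \leq \frac{\varepsilon \pi^2 \sigma^2 n}{5}$ yields $\sigma \leq \frac{\pi \sqrt{\varepsilon}}{\sqrt{5C_3}}$; the term $C_5 \frac{\smooth_n^2}{n \lambda_{n-k}^2} \leq \frac{\varepsilon \pi^2 \sigma^2 n}{5}$ rearranges to the lower bound $\sigma \geq \frac{\sqrt{5C_5}}{\pi}\frac{\smooth_n}{n \lambda_{n-k}\sqrt{\varepsilon}}$; the term $C_4 \log n \leq \frac{\varepsilon \pi^2 \sigma^2 n}{5}$ gives $\sigma \geq \sqrt{\frac{5 C_4}{\varepsilon \pi^2} \cdot \frac{\log n}{n}}$; the term involving $C_1$ splits according to the two pieces $\sqrt{\triangle \smooth_n n}$ and $\frac{n^{3/2}\lambda_{n-k+1}^2}{\sqrt{\triangle \smooth_n}}$, each giving rise to one of the two summands in the last max in \ref{eq:trs_denoise_cond2}; and finally the term $C_2 \sigma^2 (1+\abs{\calL_{\lambbar}} + \sqrt{(1+\abs{\calL_{\lambbar}})\log n}) \leq \frac{\varepsilon \pi^2 \sigma^2 n}{5}$ is exactly the combinatorial constraint $1 + \abs{\calL_{\lambbar}} + \sqrt{(1+\abs{\calL_{\lambbar}})\log n} \leq \frac{\pi^2}{5 C_2}\varepsilon n$ in \ref{lem:trs_denoise_cond1}.

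The proof is essentially bookkeeping, so I do not expect any real obstacle — the work was already done in Theorem \ref{thm:trs_err_bd_prob} and Lemma \ref{lem:lowbd_mu_prob}. The only subtle point to check is that the upper bound $\sigma \leq \frac{\pi \sqrt{\varepsilon}}{\sqrt{5 C_3}}$ is compatible with the hypotheses of Theorem \ref{thm:trs_err_bd_prob} (i.e., does not exceed $\frac{1}{4\sqrt{3}\pi}$) and that the various lower bounds on $\sigma$ are compatible with the various upper bounds — the latter reduces to requirements that are implicitly absorbed into the stated noise regime by taking $n$ sufficiently large and $\smooth_n$, $\lambda_{n-k+1}$ sufficiently small. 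I would conclude by a union bound giving probability at least $1 - 10/n^2$ as claimed.
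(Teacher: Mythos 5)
Your proposal matches the paper's proof essentially verbatim: combine Theorem \ref{thm:trs_err_bd_prob} with Lemma \ref{lem:lem:simp_conc}\ref{lem:simp_conc_item3} via a union bound, then require each of the five summands on the right of \eqref{eq:trs_prob_err_bd} to be at most $\frac{\varepsilon \pi^2 \sigma^2 n}{5}$, which yields exactly the conditions of the statement. The bookkeeping you give, including the observation that $\frac{\pi\sqrt{\varepsilon}}{\sqrt{5C_3}} \leq \frac{1}{4\sqrt{3}\pi}$ so the upper noise constraint of Theorem \ref{thm:trs_err_bd_prob} is subsumed, is correct.
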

\begin{proof}
Recall from Lemma \ref{lem:lem:simp_conc} \ref{lem:simp_conc_item3}, that $\norm{z-h}_2^2 \geq \pi^2 \sigma^2 n$ w.p at least $1-\frac{2}{n^2}$. Conditioning on the intersection of this event and the event in Theorem \ref{thm:trs_err_bd_prob}, it suffices to ensure that 
the bound in \eqref{eq:trs_prob_err_bd} is less than or equal to $\pi^2\sigma^2 n \varepsilon$. This in turn is ensured provided each term in the RHS of \eqref{eq:trs_prob_err_bd} is less than or equal to $\varepsilon \frac{\pi^2 \sigma^2 n}{5}$. 
Combining the resulting conditions with those in Theorem \ref{thm:trs_err_bd_prob} yields the statement of the Theorem.
\end{proof}
The following simplification of Theorem \ref{thm:trs_prob_denoise} is obtained for $k=1$, as was done in Corollary \ref{cor:trs_high_prob_simp_err}.
%
%
%------------------------------------------------------------
% Simplified corollary for TRS, denoising high probability
%------------------------------------------------------------
%
\begin{corollary} \label{cor:trs_high_prob_simp_den}
Let $z \in \calC_n$ be generated as in \eqref{eq:noise_mod}, then the solution $\gest$ of \eqref{prog:trs} is unique. With constants $C_1,\dots, C_5$ as in Theorem \ref{thm:trs_err_bd_prob}, for any  $\varepsilon \in (0,1)$ and $\lambbar \in [\lambmin, \lambda_1]$ with the choice $\reg = (\frac{4\pi^2 \sigma^2 n}{\triangle \smooth_n \lambbar^2})^{1/4}$, suppose that the following conditions are satisfied.

\begin{enumerate}[label=\upshape(\roman*)]
    \item\label{cor:trs_denoise_simp_cond1} $\smooth_n \leq  \frac{n \lambmin}{12}$ and $1 + \abs{\calL_{\lambbar}} + \sqrt{(1 + \abs{\calL_{\lambbar}}) \log n} \leq \frac{\pi^2}{5 C_2}\varepsilon n$.
    
     \item\label{cor:trs_denoise_simp_cond2}   
    \begin{align*}
        \max\set{286 \left(\frac{\log n}{\sqrt{n}} \right)^{1/2}, \frac{\sqrt{5 C_5}}{\pi} \left(\frac{\smooth_n}{n \lambmin \sqrt{\varepsilon}} \right), \sqrt{\left(\frac{5C_4}{\varepsilon \pi^2} \right) \frac{\log n}{n}}, 
        \frac{5 C_1}{\pi^2 \varepsilon \lambbar}  \sqrt{\frac{\triangle \smooth_n}{n}}} \leq \sigma \leq  \frac{\pi \sqrt{\varepsilon}}{\sqrt{5 C_3}}.
    \end{align*}
\end{enumerate}    
Then with probability at least $1 - \frac{10}{n^2}$, the solution $\gest \in \mathbb{C}^n$ of \eqref{prog:trs} satisfies \eqref{eq:trs_eps_den_bd_prob}.
\end{corollary}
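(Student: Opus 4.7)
The plan is to derive Corollary \ref{cor:trs_high_prob_simp_den} as a direct specialization of Theorem \ref{thm:trs_prob_denoise} by choosing $k=1$. Since $G$ is assumed connected, we have $\lambda_n = 0 < \lambda_{n-1} = \lambmin$, so the choice $k=1$ always satisfies the hypothesis $\lambda_{n-k+1} < \lambda_{n-k}$ required by Theorem \ref{thm:trs_prob_denoise}. With $k=1$, we obtain $\lambda_{n-k} = \lambmin$ and $\lambda_{n-k+1} = \lambda_n = 0$, and it then remains to verify that each condition of Theorem \ref{thm:trs_prob_denoise} collapses to the corresponding condition of Corollary \ref{cor:trs_high_prob_simp_den}.

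First I would simplify condition \ref{lem:trs_denoise_cond1}. Substituting $\lambda_{n-k} = \lambmin$, it becomes $\smooth_n \leq \min\{n\lambmin/12, n\lambbar/2\}$. Since $\lambbar \in [\lambmin,\lambda_1]$ implies $n\lambbar/2 \geq n\lambmin/2 \geq n\lambmin/12$, the minimum is attained at $n\lambmin/12$, recovering the first part of hypothesis \ref{cor:trs_denoise_simp_cond1} in the corollary. The condition on $1 + \abs{\calL_{\lambbar}} + \sqrt{(1 + \abs{\calL_{\lambbar}}) \log n}$ is identical in both statements.

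Next I would examine condition \ref{eq:trs_denoise_cond2}. For the upper bound on $\sigma$, the second term $\frac{\lambbar}{16 \lambda_{n-k+1}^2}\sqrt{\triangle \smooth_n/(4\pi^2 n)}$ becomes vacuous because $\lambda_{n-k+1} = \lambda_n = 0$ sends it to $+\infty$, so only the bound $\sigma \leq \pi\sqrt{\varepsilon}/\sqrt{5C_3}$ remains. For the lower bound on $\sigma$, the same vanishing of $\lambda_{n-k+1}$ eliminates the term $\lambda_{n-k+1}^2 \sqrt{n/(\triangle \smooth_n)}$ inside the maximum, leaving exactly the four quantities listed in the corollary. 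The remaining two terms (involving $\log n/\sqrt{n}$ and $\smooth_n/(n\lambmin \sqrt{\varepsilon})$) match up by direct substitution $\lambda_{n-k} = \lambmin$.

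Once both conditions are verified to coincide, the conclusion $\norm{\gest/\abs{\gest} - h}_2^2 \leq \varepsilon \norm{z-h}_2^2$ with probability at least $1 - 10/n^2$ follows immediately from Theorem \ref{thm:trs_prob_denoise} applied with $k=1$, and uniqueness of $\gest$ is already guaranteed by Lemma \ref{lemma:useful_trs_sol} (noting that $z \not\perp \calN(L)$ almost surely under the noise model \eqref{eq:noise_mod}). There is no genuine obstacle here since the proof is entirely a bookkeeping exercise; the only minor subtlety is verifying that the $\lambda_{n-k+1}^2$-dependent quantities appearing in Theorem \ref{thm:trs_prob_denoise} behave correctly in the limit $\lambda_{n-k+1} \downarrow 0$, namely that the term they contribute to the upper bound on $\sigma$ becomes non-binding while the term they contribute to the lower bound vanishes, so that the resulting hypothesis is weaker in both directions and hence well-posed.
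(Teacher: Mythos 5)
Your proof is correct and matches the paper's approach exactly: the paper obtains Corollary \ref{cor:trs_high_prob_simp_den} as the $k=1$ specialization of Theorem \ref{thm:trs_prob_denoise}, using $\lambda_n = 0 < \lambda_{n-1} = \lambmin$ (which holds since $G$ is connected) to make the $\lambda_{n-k+1}$-dependent terms non-binding. Your bookkeeping of how the $\lambda_{n-k+1}^2$ terms vanish in the lower bound and render the corresponding upper-bound constraint on $\sigma$ vacuous is precisely the intended argument.
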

Finally, as done previously for \eqref{prog:ucqp}, it will be instructive to translate Theorem \ref{thm:trs_prob_denoise} for the special cases $G = K_n$, $S_n$ or $P_n$. This is stated below using the simplified version in Corollary \ref{cor:trs_high_prob_simp_den}. 
%------------------------------------------------------
% Corollary for denoising via TRS for special graphs
%------------------------------------------------------
\begin{corollary} \label{cor:trs_den_prob_spec_graphs}
Let $z \in \calC_n$ be generated as in \eqref{eq:noise_mod} and $\varepsilon \in (0,1)$.

 \begin{enumerate}[label=\upshape(\roman*)]
     \item\label{cor:trs_den_prob_eps_Kn} {($G = K_n$)} Suppose $\frac{n}{\sqrt{\log n}} \gtrsim \frac{1}{\varepsilon}$, $\smooth_n \lesssim n^2$ and $\max\set{\frac{\sqrt{\smooth_n}}{n \varepsilon} , \frac{\smooth_n}{n^2 \sqrt{\varepsilon}} , (\frac{\log n}{\sqrt{n}})^{1/2}, (\frac{\log n}{\varepsilon n})^{1/2} } \lesssim \sigma \lesssim \sqrt{\varepsilon}$.  If $\reg \asymp (\frac{\sigma^2}{n^2 \smooth_n })^{1/4}$, then the (unique) solution $\gest$ of \eqref{prog:trs} satisfies \eqref{eq:trs_eps_den_bd_prob} w.h.p.    
     
     \item\label{cor:trs_den_prob_eps_Sn} {($G = S_n$)} Suppose $\frac{n}{\sqrt{\log n}} \gtrsim \frac{1}{\varepsilon}$, $\smooth_n \lesssim n$ and 
     $\max\set{\frac{\sqrt{\smooth_n}}{\varepsilon} , \frac{\smooth_n}{n \sqrt{\varepsilon}},  (\frac{\log n}{\sqrt{n}})^{1/2}, (\frac{\log n}{\varepsilon n})^{1/2}} \lesssim \sigma \lesssim \sqrt{\varepsilon}$.  If $\reg \asymp (\frac{\sigma^2}{\smooth_n })^{1/4}$, then the (unique) solution $\gest$ of \eqref{prog:trs} satisfies \eqref{eq:trs_eps_den_bd_prob} w.h.p.
     
     \item\label{cor:trs_den_prob eps_Pn} {($G = P_n$)} For a given $\theta \in [0,1)$, suppose $n^{\theta} + \sqrt{n^{\theta} \log n} \lesssim \varepsilon n$, $\smooth_n \lesssim \frac{1}{n}$ and  
     $$\max\set{\frac{n^{\frac{3 -4\theta}{2}}}{\varepsilon} \sqrt{\smooth_n}, \frac{n \smooth_n}{\sqrt{\varepsilon}},  \left(\frac{\log n}{\sqrt{n}} \right)^{1/2}, \left(\frac{\log n}{\varepsilon n} \right)^{1/2}} \lesssim \sigma \lesssim \sqrt{\varepsilon}.$$  
     If $\reg \asymp (\frac{\sigma^2 n^{5-4\theta}}{\smooth_n })^{1/4}$, then the (unique) solution $\gest$ of \eqref{prog:trs} satisfies \eqref{eq:trs_eps_den_bd_prob} w.h.p. 
\end{enumerate}
\end{corollary}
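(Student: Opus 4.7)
The plan is to simply specialize Corollary \ref{cor:trs_high_prob_simp_den} to each of the three graphs by plugging in the well-known Laplacian spectral data, exactly in the spirit of the proof of Corollary \ref{cor:ucqp_den_expec}. Namely, for each graph I pick an appropriate value of $\lambbar \in [\lambmin, \lambda_1]$ so that $\abs{\calL_{\lambbar}}$ is controlled, and then simplify the smoothness/eigenvalue condition (i), the noise-regime condition (ii), and the explicit expression for $\reg$ term by term.

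For $K_n$, I use $\triangle = n-1$ together with $\lambda_{n-1} = \cdots = \lambda_1 = n$. The natural choice is $\lambbar = n$, which gives $\lambmin = n$ and $\abs{\calL_{\lambbar}} = 0$. Substituting into condition (i) of Corollary \ref{cor:trs_high_prob_simp_den} yields $\smooth_n \lesssim n^2$ along with $1 + \sqrt{\log n} \lesssim \varepsilon n$, i.e.\ $n/\sqrt{\log n} \gtrsim 1/\varepsilon$. The four terms inside the maximum in condition (ii) reduce to $(\log n/\sqrt{n})^{1/2}$, $\smooth_n/(n^2\sqrt{\varepsilon})$, $(\log n/(\varepsilon n))^{1/2}$ and $\sqrt{\smooth_n}/(n\varepsilon)$, and $\reg \asymp (\sigma^2 n/(\triangle \smooth_n \lambbar^2))^{1/4}$ collapses to $\reg \asymp (\sigma^2/(n^2 \smooth_n))^{1/4}$. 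The case $S_n$ is entirely analogous: here $\triangle = n-1$, $\lambmin = 1$ and taking $\lambbar = 1$ again gives $\abs{\calL_{\lambbar}} = 0$, and an identical term-by-term simplification yields the stated conditions and $\reg \asymp (\sigma^2/\smooth_n)^{1/4}$.

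The more interesting case is $P_n$, where $\triangle = 2$ and $\lambda_j = 4\sin^2\bigl(\tfrac{\pi(n-j)}{2n}\bigr)$, so that $\lambmin \asymp 1/n^2$ and $\lambda_1 \asymp 1$. Here I set $\lambbar \asymp 1/n^{2(1-\theta)}$ for the free parameter $\theta \in [0,1)$, and a direct count using the small-angle approximation $\sin^2(x) \asymp x^2$ shows that $\lambda_j < \lambbar$ forces $n-j \lesssim n^\theta$, so $\abs{\calL_{\lambbar}} \lesssim n^\theta$. Condition (i) becomes $\smooth_n \lesssim 1/n$ together with $n^\theta + \sqrt{n^\theta \log n} \lesssim \varepsilon n$, and the four terms inside the maximum in condition (ii) simplify to $(\log n/\sqrt{n})^{1/2}$, $n \smooth_n/\sqrt{\varepsilon}$, $(\log n/(\varepsilon n))^{1/2}$ and $n^{(3-4\theta)/2}\sqrt{\smooth_n}/\varepsilon$; the prescribed regularization parameter becomes $\reg \asymp (\sigma^2 n^{5-4\theta}/\smooth_n)^{1/4}$.

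There is no real mathematical obstacle here — the entire argument is plug-and-chug substitution into Corollary \ref{cor:trs_high_prob_simp_den}. The only genuine bookkeeping issue is, for each graph, verifying that each of the four constituents of the maximum in condition (ii) matches one of the terms listed in the corollary statement and that none of them is absorbed into another (for instance, for $K_n$ and $S_n$ one should check that $\sqrt{\smooth_n}/\varepsilon \text{ or } n^{-1}\sqrt{\smooth_n}/\varepsilon$ genuinely comes from the $\lambbar^{-1}\sqrt{\triangle \smooth_n/n}/\varepsilon$ term, and for $P_n$ that the counting bound $\abs{\calL_{\lambbar}} \lesssim n^\theta$ is tight enough for the stated range of $\theta$).
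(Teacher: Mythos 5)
Your proposal is correct and matches the paper's approach exactly: the paper's proof simply invokes Corollary \ref{cor:trs_high_prob_simp_den} with $\lambmin$ and $\lambbar$ chosen as in Corollary \ref{cor:ucqp_den_expec}, which is precisely the term-by-term substitution you carry out. All three verifications (spectral data, choices of $\lambbar$, simplifications of conditions (i)--(ii) and of $\gamma$) are accurate.
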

\begin{proof}
Use Corollary \ref{cor:trs_high_prob_simp_den} with $\lambmin, \lambbar$ as in Corollary \ref{cor:ucqp_den_expec}.
%For parts \ref{cor:trs_den_prob_eps_Kn}, \ref{cor:trs_den_prob_eps_Sn}, we simply use Corollary \ref{cor:trs_high_prob_simp_den}. In particular, when $G = K_n$, we have $\lambmin = n$ and we set $\lambbar = n$ so that $\calL_{\lambbar} = \emptyset$. When $G = S_n$, we have $\lambmin = 1$ and we set $\lambbar = 1$ (hence $\calL_{\lambbar} = \emptyset$). This leads to the statements in \ref{cor:trs_den_prob_eps_Kn}, \ref{cor:trs_den_prob_eps_Sn}.
%For part \ref{cor:trs_den_prob eps_Pn}, Corollary \ref{cor:trs_high_prob_simp_den} is weak and we will need to use Theorem \ref{thm:trs_prob_denoise} with a careful choice of $k$. \todo{show this analysis here}
\end{proof}
For $K_n$, note that only $k = 1$ meets the requirement of Theorem \ref{thm:trs_prob_denoise} since $\lambda_{n-1} = \cdots = \lambda_{1}$. For $S_n$, the only other possibility (apart from $k=1$) is to choose $k = n-1$, since $\lambda_{2} = 1 < \lambda_1 = n$. But this choice of $k$ leads to a vacuous noise regime due to the appearance of the term $\sqrt{\smooth_n} + \frac{1}{\sqrt{\smooth_n}}$ as a lower bound on $\sigma$. 

\begin{remark} \label{rem:trs_exgraph_large_n}
Similarly to Remark \ref{rem:cor_ex_graph_prob_ucqp} for \eqref{prog:ucqp}, we can deduce conditions on the smoothness term $\smooth_n$ which -- when $n \rightarrow \infty$ -- lead to a non-vacuous regime for $\sigma$ of the form $o(1) \leq \sigma \lesssim \sqrt{\varepsilon}$. Here, we will only treat the case where $\varepsilon$ is fixed.
\begin{enumerate}
    \item {($G = K_n$)} $\smooth_n = o(n^2)$ suffices. 
    
    \item {($G = S_n$)} $\smooth_n = o(1)$ suffices.
    
    \item {($G = P_n$)} $\smooth_n = o(1/n)$ suffices.
\end{enumerate}
\end{remark}
\paragraph{Denoising modulo samples of a function.}
When $G = P_n$, the requirement $\smooth_n = o(1/n)$ is far from satisfactory and suggests that Corollary \ref{cor:trs_high_prob_simp_den} is weak when applied to a path graph. Indeed, when we obtain noisy modulo $1$ samples of a $M$-Lipschitz function $f:[0,1] \rightarrow \matR$, then we have $\smooth_n \asymp  \frac{M^2}{n}$ as seen in \eqref{eq:func_quad_var_Bn_1}. Unfortunately, the condition on $\sigma$ in Corollary \ref{cor:trs_den_prob_spec_graphs}\ref{cor:trs_den_prob eps_Pn} becomes vacuous when $\smooth_n \asymp 1/n$. Interestingly, we can handle this smoothness regime by making use of Theorem \ref{thm:trs_prob_denoise} with a careful choice of $k$. This is made possible by the fact that the spectrum of the Laplacian of $P_n$ satisfies the condition  $\lambda_{n-k+1} < \lambda_{n-k}$ for each $k \in [n-1]$. Consequently, we obtain the following Corollary of Theorems \ref{thm:trs_err_bd_prob} and \ref{thm:trs_prob_denoise}; its proof is deferred to Appendix \ref{appsec:proof_Pn_res_bet_conds}.
\begin{corollary} \label{cor:Pn_res_bet_conds}
Consider the example from Section \ref{subsec:prob_setup} where we obtain noisy modulo 1 samples of a $M$-Lipschitz function $f:[0,1] \rightarrow \matR$. If $\reg \asymp \left(\frac{\sigma^2 n^{10/3}}{M^2} \right)^{1/4}$ then the following is true for the solution $\gest$ of \eqref{prog:trs}.
\begin{enumerate}
    \item If $n \gtrsim \max \set{1,M^2}$ and $(\frac{\log n}{\sqrt{n}})^{1/2} \lesssim \sigma \lesssim \min \set{1, n^{1/3} M}$ then w.h.p,
    \begin{equation*}
       \norm{\frac{\gest}{\abs{\gest}} - h}_2^2 \lesssim \left(\sigma \left(M + \frac{1}{M} \right) + \sigma^2 \right) n^{2/3} + \sigma^4 n + \log n + \frac{M^4}{n}.
    \end{equation*}

    \item For $\varepsilon \in (0,1)$, 
    if $n \gtrsim \max \set{(1/\varepsilon)^3, M^2}$ and 
\begin{equation*}
    \max\set{\frac{1}{\varepsilon n^{1/3}} \left(M + \frac{1}{M} \right) , \frac{M^2}{n \sqrt{\varepsilon}},  \left(\frac{\log n}{\sqrt{n}} \right)^{1/2}, \left(\frac{\log n}{\varepsilon n} \right)^{1/2}} \lesssim \sigma \lesssim \min\set{\sqrt{\varepsilon} , n^{1/3} M},
\end{equation*}
    then $\gest$ satisfies \eqref{eq:ucqp_eps_den_bd_prob} w.h.p.
\end{enumerate}
\end{corollary}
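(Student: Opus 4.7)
The plan is to instantiate Theorem \ref{thm:trs_err_bd_prob} (for part 1) and Theorem \ref{thm:trs_prob_denoise} (for part 2) with carefully chosen values of the parameter $k$ and of the cut-off $\lambbar$. For the path graph $P_n$ one has $\lambda_j = 4\sin^2(\pi(n-j)/(2n))$, so that $\lambda_{n-k+1} \asymp (k-1)^2/n^2$ and $\lambda_{n-k} \asymp k^2/n^2$ for $1 \leq k \lesssim n$; in particular $\lambda_{n-k+1} < \lambda_{n-k}$ for every $k \in [n-1]$, so the cutoff hypothesis of Theorem \ref{thm:trs_err_bd_prob} is always satisfied. Combined with $\triangle = 2$ and the bound $\smooth_n \leq 8\pi^2 M^2/n$ from \eqref{eq:func_quad_var_Bn_1}, these are all the structural inputs the two theorems need.

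The key idea is to take $\theta = 2/3$ and $k \asymp \sqrt{n}$. With the first choice, $\lambbar \asymp n^{-2/3}$ and $\abs{\calL_{\lambbar}} \lesssim n^{2/3}$ (as in Corollary \ref{cor:ucqp_den_expec}\ref{cor:ucqp_den_eps_Pn}); with the second, $\lambda_{n-k+1}, \lambda_{n-k} \asymp 1/n$. Plugging these into the five terms of the RHS of \eqref{eq:trs_prob_err_bd}, straightforward substitution produces precisely the scaling announced in part 1: the two contributions $\frac{\sigma}{\lambbar}\sqrt{\triangle \smooth_n n}$ and $\frac{\sigma}{\lambbar} \cdot n^{3/2}\lambda_{n-k+1}^2/\sqrt{\triangle \smooth_n}$ become $\sigma M n^{2/3}$ and $\sigma n^{2/3}/M$ respectively, the term $\sigma^2(1+\abs{\calL_{\lambbar}}+\sqrt{\abs{\calL_{\lambbar}}\log n})$ becomes $\sigma^2 n^{2/3}$, the term $\smooth_n^2/(n\lambda_{n-k}^2)$ collapses to $M^4/n$, while $\sigma^4 n$ and $\log n$ remain as is. It then remains to check the hypotheses (i)--(ii) of Theorem \ref{thm:trs_err_bd_prob}: the smoothness requirement $\smooth_n \lesssim \min\{n\lambda_{n-k}, n\lambbar\}$ reduces, under these choices, to $M^2/n \lesssim 1$, i.e., $n \gtrsim M^2$; and the upper bound on $\sigma$ simplifies to $\sigma \lesssim \min\{1, Mn^{1/3}\}$. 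Both are assumed. This gives part 1.

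For part 2, the plan is identical but using Theorem \ref{thm:trs_prob_denoise}. The additional lower bounds on $\sigma$ there translate, under $\theta=2/3$ and $k \asymp \sqrt{n}$, to $\sigma \gtrsim \smooth_n/(n \lambda_{n-k}\sqrt{\varepsilon}) \asymp M^2/(n\sqrt{\varepsilon})$, $\sigma \gtrsim \sqrt{\log n/(n\varepsilon)}$, and $\sigma \gtrsim \tfrac{1}{\varepsilon \lambbar}(\sqrt{\triangle \smooth_n/n} + \lambda_{n-k+1}^2\sqrt{n/(\triangle \smooth_n)}) \asymp (M+1/M)/(\varepsilon n^{1/3})$, exactly matching the hypotheses. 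The additional upper bound $\sigma \lesssim \sqrt{\varepsilon}$ is also assumed, and the combinatorial requirement $1+\abs{\calL_{\lambbar}} + \sqrt{\abs{\calL_{\lambbar}}\log n} \lesssim \varepsilon n$ becomes $n^{2/3} \lesssim \varepsilon n$, i.e., $n \gtrsim (1/\varepsilon)^3$. The main (and really the only) obstacle is bookkeeping: one must check that the single pair of choices $\theta = 2/3$ and $k \asymp \sqrt{n}$ is simultaneously consistent with every scaling requirement of Theorems \ref{thm:trs_err_bd_prob} and \ref{thm:trs_prob_denoise}, which is what makes this choice effectively ``balancing'' the five error terms. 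The reason this decouples cleanly is that $k$ enters the error bound only through $\lambda_{n-k}, \lambda_{n-k+1}$, while $\lambbar$ enters only through $\abs{\calL_{\lambbar}}$ and the high-frequency attenuation in $E_1$, so the optima can be computed term by term and no further interaction appears.
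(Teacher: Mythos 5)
Your proof is correct and follows essentially the same route as the paper's: instantiate Theorem \ref{thm:trs_err_bd_prob} (resp.\ Theorem \ref{thm:trs_prob_denoise}) with the path-graph eigenvalue asymptotics, $\smooth_n \asymp M^2/n$ from \eqref{eq:func_quad_var_Bn_1}, and the balancing choices $k \asymp \sqrt{n}$ and $\theta = 2/3$, then verify the hypotheses reduce to $n \gtrsim \max\{1, M^2\}$ (and $n \gtrsim (1/\varepsilon)^3$ for part 2) and the stated noise regimes. The only cosmetic difference is that the paper derives $k \asymp \sqrt{n}$ and $\theta = 2/3$ by first substituting general $k,\theta$ and then balancing, whereas you declare the choices up front and verify; the substance and the per-term computations are identical.
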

The error bound in Corollary \ref{cor:Pn_res_bet_conds} is visibly worse than that in Corollary \ref{cor:ucqp_den_func_mod} due to the appearance of an additional $\sigma^4 n$ term.  For $n$ large enough and $\varepsilon \in (0,1)$ fixed, Corollary \ref{cor:Pn_res_bet_conds} asserts that \eqref{prog:trs} succeeds in denoising in the noise regime $(\frac{\log n}{\sqrt{n}})^{1/2} \lesssim \sigma \lesssim \sqrt{\varepsilon}$. The corresponding noise regime for \eqref{prog:ucqp} is the relatively weaker requirement $\frac{M}{\varepsilon n^{1/3}} \lesssim \sigma \lesssim 1$, as seen from Corollary \ref{cor:ucqp_den_func_mod}.

% Simulations
\section{Simulations} \label{sec:sims}
\rev{We now provide simulation results on some synthetic examples. For concreteness, we consider the following functions.}
\begin{enumerate}
    \item \rev{$f_1(x) = 3x \cos^2(2 \pi x) - \sin^2(2\pi x) + 0.7$}, 
    
    \item \rev{$f_2(x) = \sin (2\pi x)$.}
\end{enumerate}
\rev{The function $f_1 (x) \bmod 1$ is relatively more complicated than $f_2 (x) \bmod 1$ as the former has more number number of ``folds'' or ``jumps'' than the latter. Following the notation and setup in the example described in Section \ref{subsec:prob_setup}, we sample the functions on a uniform grid in $[0,1]$ (containing $n = 500$ points) according to the Gaussian noise model in \eqref{eq:mod1_noisy_samp_func}. }
%-------------------------------------------
% Plots for MSE for different noise levels
%-------------------------------------------
\begin{figure}[!ht]
\centering
\subcaptionbox[]{$f_1$ (hard)}[ 0.4\textwidth ]
{\includegraphics[width=0.4\textwidth]{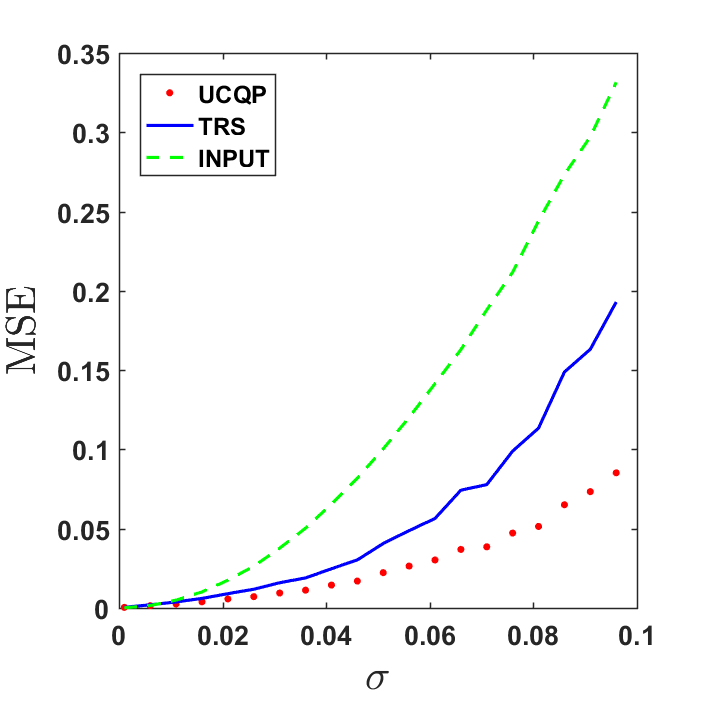} } %\label{fig:hard_all_sig}
%%\hspace{0.01\textwidth} % separation
%%
\subcaptionbox[]{$f_2$ (easy)}[ 0.4\textwidth ]
{\includegraphics[width=0.4\textwidth]{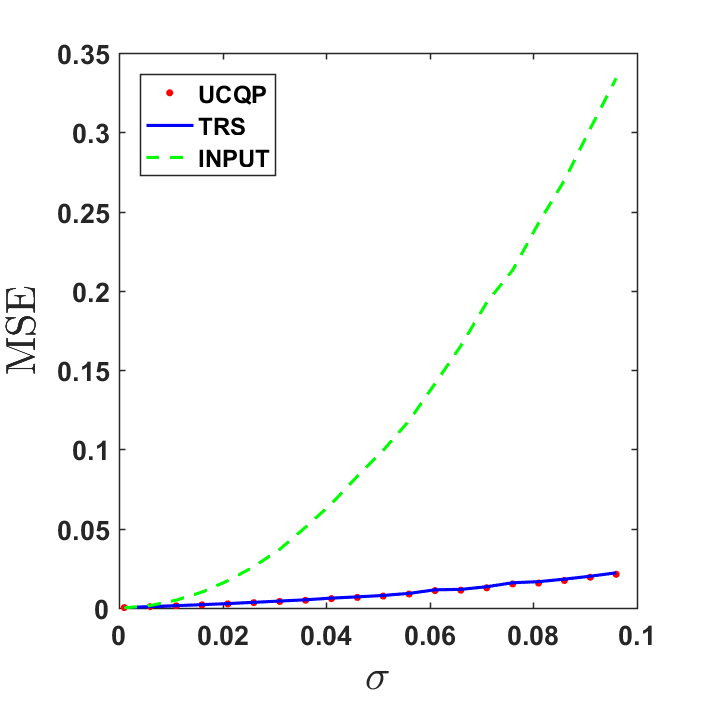} } %\label{fig:easy_all_sig}
%\hspace{0.01\textwidth} % separation
%
 %
%------------------------

\subcaptionbox[]{$f_1$ (hard)}[ 0.4\textwidth ]
{\includegraphics[width=0.4\textwidth]{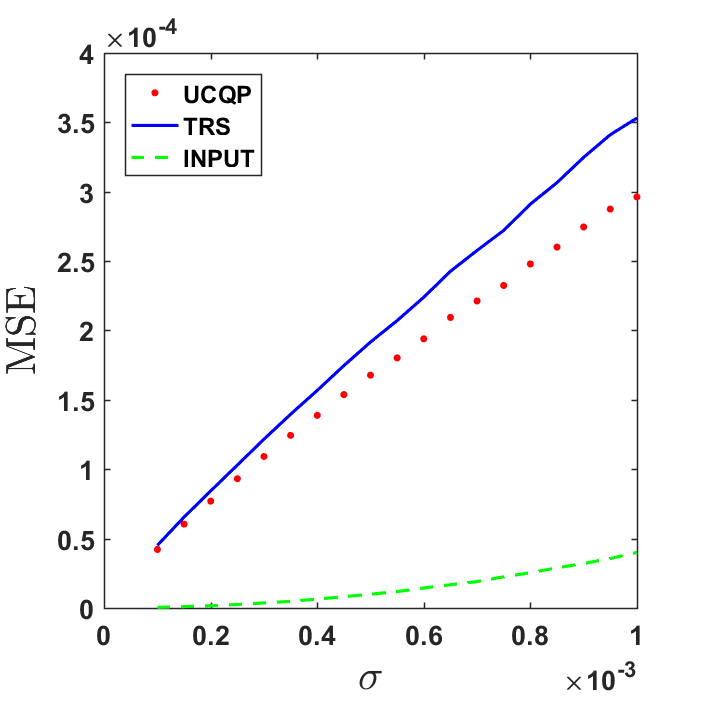} } %\label{fig:hard_low_sig}
%%\hspace{0.01\textwidth} % separation
%%
\subcaptionbox[]{$f_2$ (easy)}[ 0.4\textwidth ]
{\includegraphics[width=0.4\textwidth]{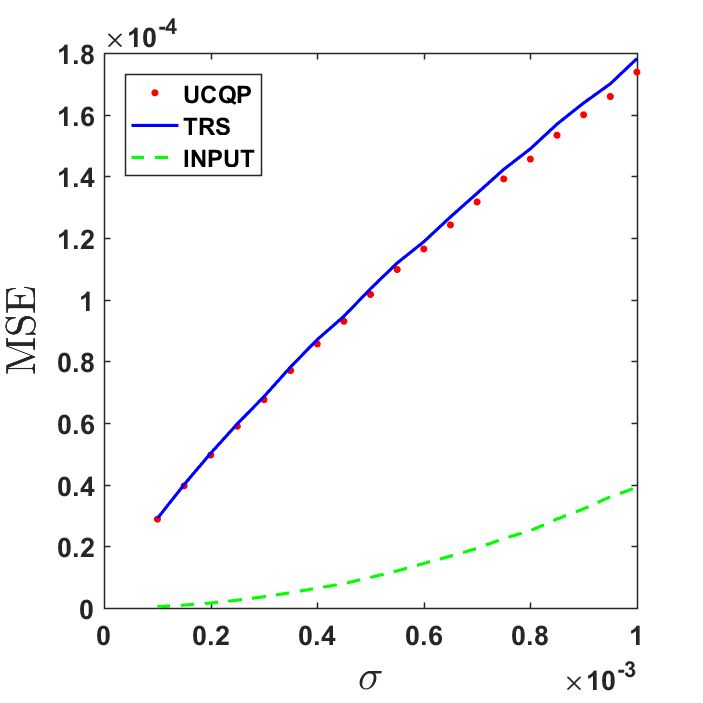} } %\label{fig:easy_low_sig}
%\hspace{0.01\textwidth} % separation
%
%
\captionsetup{width=0.98\linewidth}
\caption[Short Caption]{MSE vs $\sigma$ for the estimators and the input. Top two figures are for $\sigma$ ranging from $10^{-3}$ to $0.096$. Bottom two figures are for smaller values of $\sigma$ ranging from $10^{-4}$ to $10^{-3}$. Throughout, we set $\reg =  (\sigma^2 n^{10/3})^{1/4}$ as specified in Corollaries \ref{cor:ucqp_den_func_mod} and \ref{cor:Pn_res_bet_conds} for \eqref{prog:ucqp} and \eqref{prog:trs} respectively. Results are averaged over $30$ trials. }
\label{fig:plots_n_500_var_sigma}
\end{figure}
\rev{Taking $G = P_n$, our aim is to demonstrate the behaviour of the mean square error (MSE) of the estimators, for different noise levels $\sigma$. In particular, we are interested in checking whether   $\norm{\frac{\gest}{\abs{\gest}} - h}_2^2$ is less than $\norm{z - h}_2^2$ (MSE of the input) with $\gest$ a solution of \eqref{prog:ucqp} or \eqref{prog:trs}.}

\rev{The results are illustrated in Figure \ref{fig:plots_n_500_var_sigma} for $\reg$ as specified in Corollaries \ref{cor:ucqp_den_func_mod} and \ref{cor:Pn_res_bet_conds}. The top two plots therein show the MSE values for $\sigma$ ranging from $10^{-3}$ to $0.096$. As $\sigma$ increases, the denoising performance of the estimators becomes more apparent. Interestingly, \eqref{prog:trs} performs worse than \eqref{prog:ucqp} for the hard input ($f_1$), but they both exhibit similar performance for the easier case ($f_2$). When $\sigma$ is very small (between $10^{-4}$ and $10^{-3}$), we can see from the bottom two plots in Figure  \ref{fig:plots_n_500_var_sigma} that the MSE of the estimators have a slightly larger value than that of the input. Hence for very low values of $\sigma$, the denoising performance is not seen. This is also consistent with the statements of Corollaries \ref{cor:ucqp_den_func_mod} and \ref{cor:Pn_res_bet_conds} which require $\sigma \gtrsim o(1)$ for guaranteed denoising of the input.} 

\rev{It is important to keep in mind that the value of the regularizer $\reg$ that we used is not optimal since it does not yield the optimal dependency of the error bounds in terms of $n$ for this specific setup (as noted in Remark \ref{rem:ucqp_lw_bd_subopt}).} \secrev{For the optimal choice of $\reg$, we would expect that the denoising performance is exhibited for very low values of $\sigma$ as well. To illustrate this, we repeat the previous experiment (with $n = 500$ fixed) but this time with $\reg = 400*\sigma$. Note that in Figure \ref{fig:plots_n_500_var_sigma}, we had chosen $\reg = (500)^{5/6} \sigma^{1/2} \approx 177.5* \sigma^{1/2} $. For this new choice of $\reg$ (see Figure \ref{fig:plots_n_500_var_sigma_better_reg}), we see that denoising also occurs for low values of $\sigma$ (in the range $10^{-4}$ and $10^{-3}$) with similar performance for \eqref{prog:ucqp} and \eqref{prog:trs} in this noise regime. For larger values of $\sigma$ (in the range $10^{-3}$ to $0.096$), the top two plots in Figure \ref{fig:plots_n_500_var_sigma_better_reg} are similar to those of Figure \ref{fig:plots_n_500_var_sigma}.}

%-------------------------------------------
% Plots for MSE for different noise levels
%-------------------------------------------
\begin{figure}[!ht]
\centering
\subcaptionbox[]{$f_1$ (hard)}[ 0.4\textwidth ]
{\includegraphics[width=0.4\textwidth]{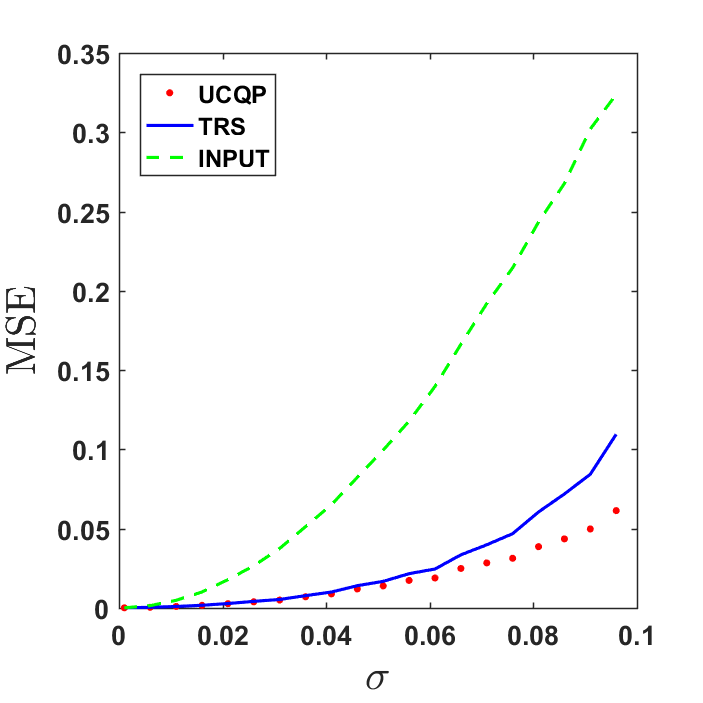} } %\label{fig:hard_all_sig}
%%\hspace{0.01\textwidth} % separation
%%
\subcaptionbox[]{$f_2$ (easy)}[ 0.4\textwidth ]
{\includegraphics[width=0.4\textwidth]{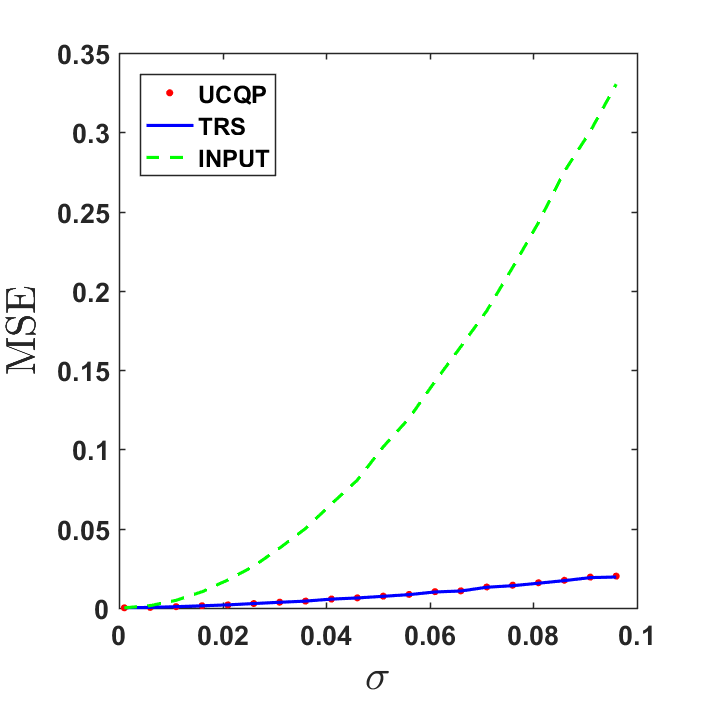} } %\label{fig:easy_all_sig}
%\hspace{0.01\textwidth} % separation
%
 %
%------------------------

\subcaptionbox[]{$f_1$ (hard)}[ 0.4\textwidth ]
{\includegraphics[width=0.4\textwidth]{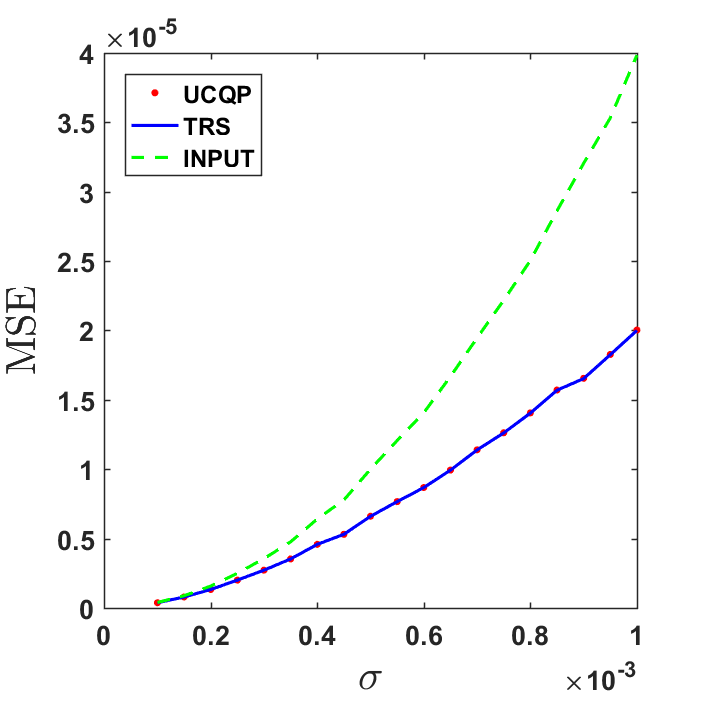} } %\label{fig:hard_low_sig}
%%\hspace{0.01\textwidth} % separation
%%
\subcaptionbox[]{$f_2$ (easy)}[ 0.4\textwidth ]
{\includegraphics[width=0.4\textwidth]{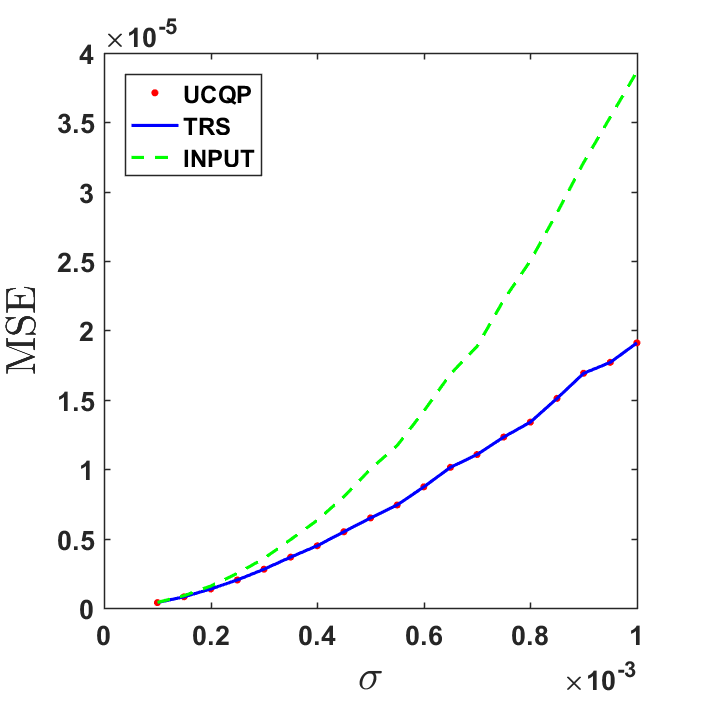} } %\label{fig:easy_low_sig}
%\hspace{0.01\textwidth} % separation
%
%
\captionsetup{width=0.98\linewidth}
\caption[Short Caption]{MSE vs $\sigma$ for the estimators and the input. Top two figures are for $\sigma$ ranging from $10^{-3}$ to $0.096$. Bottom two figures are for smaller values of $\sigma$ ranging from $10^{-4}$ to $10^{-3}$. Throughout, we set $\reg =  400*\sigma$ . Results are averaged over $30$ trials. }
\label{fig:plots_n_500_var_sigma_better_reg}
\end{figure}
%
%

% discussion
\section{Discussion} \label{sec:discussion}
We now discuss in detail some related work and conclude with directions for future work.
\subsection{Related work} \label{subsec:existing_work} 
There exist numerous methods for this problem in the phase unwrapping community, most of which are for the setting $d = 2$ (since this case has the most number of applications). Such methods can be broadly classified as belonging to the class of (a) least squares based approaches (e.g., \cite{huang2012path, Rivera04half}), (b) branch cut methods (e.g., \cite{Chavez02, Prati90}), or (c) network flow methods (e.g., \cite{Towers91, takeda96}). While we refer the reader to \cite{CMT18_long} for a more detailed discussion of these methods (as well as other related approaches from phase unwrapping), we remark that most of these approaches are based on heuristics with no theoretical performance guarantees. 

A recent line of work for this problem has led to the development of new methods with provable performance guarantees. Bhandari et al. \cite{bhandari17} considered equispaced sampling of a univariate bandlimited function (with spectrum in $[-\pi,\pi]$) and showed in the noiseless setting that if the sampling width is less than or equal to $\frac{1}{2\pi e}$, then the samples of $g$ (and consequently the function $g$ itself) can be recovered exactly. 
This work was extended by the same set of authors to other settings such as in \cite{sparse_unlim18}, where $f$ is assumed to be the convolution of a low pass filter and a sum of $k$ Diracs, and in \cite{sparse_unlimsin18} where $f$ is considered to be a sum of $k$ sinusoids. Then, given $n$ equispaced (with step size $T$) noiseless modulo measurements of $f$, Bhandari et al. \cite{sparse_unlim18,sparse_unlimsin18} show that $f$ can be recovered exactly provided $n$ is large enough (roughly speaking, $n \gtrsim k$), and $T \leq \frac{1}{2\pi e}$. In a follow up work, Rudresh et al. \cite{rudresh_wavelet} considered the setting where $f$ is a univariate Lipschitz function, and proposed a method based on first applying a wavelet filter to the (equispaced) modulo samples, followed by a LASSO based procedure for recovering $f$. They showed that if $f$ is a polynomial of degree $p$, then it can be recovered exactly from its noiseless modulo samples provided the sampling width is $\lesssim \frac{\zeta}{L p}$, where $L$ is the Lipschitz constant\footnote{It should of course depend on $p$, but this was not stated explicitly in \cite{rudresh_wavelet}} of $f$. The authors do not provide any theoretical guarantees in the presence of noise, however demonstrate via simulation results that their method is more robust to noise than that of Bhandari et al. \cite{bhandari17}. 

While the aforementioned results \cite{bhandari17,sparse_unlim18,sparse_unlimsin18,rudresh_wavelet} are for the nonparametric setting and with $f$ being univariate, the setting where $f$ is a $d$-variate linear function was considered by Shah and Hegde \cite{shah2018signal}. Assuming $f$ to be sparse, exact recovery guarantees were provided (for the noiseless setting) in the regime $n \ll d$ for an alternating minimization based algorithm. Musa et al. \cite{MusaJG18} also consider $f$ to be a sparse linear function, but assume it to be generated from a Bernoulli-Gaussian distribution. They propose a generalized approximate message passing algorithm for recovering $f$, but without any theoretical analysis.   

In the work of Cucuringu and Tyagi \cite{CMT18_long}, the authors also proposed a semi-definite programming (SDP) relaxation of \eqref{prog:qcqp} and also considered solving \eqref{prog:qcqp} using methods for optimization over manifolds. These approaches were shown to perform well via simulations, but without any theoretical analysis. 

In a parallel work with the present paper, Fanuel and Tyagi \cite{fanuel20} derived a two-stage algorithm for unwrapping noisy modulo $1$ samples of a Lipschitz function $f:[0,1]^d \rightarrow \matR$, for the model \eqref{eq:noise_mod} with $\eta_i \sim \calN(0,\sigma^2)$ i.i.d. In the first stage, they represent the noisy data $(y_i)_{i=1}^n$ on the product manifold $\calC_n$ (as in Section \ref{subsec:den_smooth_mod_intro}), and consider estimating the ground signal $h \in \calC_n$ via a $k$NN ($k$ nearest neighbor) procedure. Assuming that the $x_i$'s form a uniform grid on $[0,1]^d$, they show that the ensuing estimate $\est{h} \in \calC_n$ satisfies (w.h.p) the $\ell_{\infty}$ error rate $||\est{h} - h ||_{\infty} \lesssim (\frac{\log n}{n})^{\frac{1}{d+2}}$ when $n$ is large enough. This error rate is then translated to a uniform error bound (w.r.t the wrap-around distance) between $f(x_i) \mod 1$ and the mod $1$ estimate obtained from $\est{h}_i$, for each $i$. Then in the second stage, they present a sequential unwrapping procedure for unwrapping these denoised $\mod 1$ samples and show that the estimates $\tilde{f}(x_i)$ of $f(x_i)$ satisfy (up to a global integer shift) the bound $\abs{\tilde{f}(x_i) - f(x_i)} \lesssim (\frac{\log n}{n})^{\frac{1}{d+2}}$ for each $i$.

Fanuel and Tyagi \cite{fanuel20} also studied the problem of identifying conditions under which the SDP formulation of Cucuringu and Tyagi \cite{CMT18_long} is a \emph{tight} relaxation of \eqref{prog:qcqp}. This is done under a general graph based setup as in the present paper. Without any statistical assumptions on the noisy data $z \in \calC_n$, their result states that if $||z-h||_{\infty} \lesssim 1$ and $\reg \Delta \lesssim 1$, then the SDP relaxation of \eqref{prog:qcqp} is tight, and consequently leads to the global solution of \eqref{prog:qcqp}. As discussed in \cite{fanuel20}, the derived conditions are stricter than what one would expect, and so there is still room for improvement in this regard. 

%----------------------------------------
% Learning smooth functions on graphs
%----------------------------------------
\subsection{Learning smooth functions on graphs} \label{subsec:learn_smooth_rel}
Estimating a smooth function $\theta^{\star} \in \matR^n$ on a graph $G = ([n],E)$ is a well studied problem in signal processing (\secrev{e.g., \cite{shuman13}}) and statistics (\secrev{e.g., \cite{belkin04,sadhanala16,kirichenko2017,kirichenko2018}}). The statistical model is typically assumed to be
\begin{equation} \label{eq:mod_smooth_fn_graph}
    y = \theta^{\star} + \eta, \quad \eta \sim \calN(0,\sigma^2 I)
\end{equation}
with the smoothness of $\theta^{\star}$ measured by $(\theta^{\star})^{\top} L \theta^{\star}$ which is assumed to be small. 
A common approach for estimating $\theta^\star$ is via the so-called Tikhonov regularization where we aim to solve
\begin{equation} \label{eq:tikh_reg}
    \min_{\theta \in \matR^n} \norm{\theta - y}_2^2 + \reg \theta^\top L \theta.
\end{equation}
While \eqref{eq:tikh_reg} is the same as \eqref{prog:ucqp}, the model in \eqref{eq:mod_smooth_fn_graph} is notably different from \eqref{eq:noise_mod}. 

Let us review some important theoretical results pertaining to \eqref{eq:tikh_reg}. Belkin et al \cite{belkin04} considered the semi-supervised learning problem of predicting the values of $\theta^\star$ on the vertices of a partially labelled graph. They use the notion of algorithmic stability to derive generalization error bounds for \eqref{eq:tikh_reg} which in particular depend on the Fiedler eigenvalue of $L$. Sadhanala et al. \cite{sadhanala16} consider the problem of estimating $\theta^\star$ under the assumption that $G$ is a $d$-dimensional regular grid. The smoothness assumption\footnote{translated to our notation in the present paper} on $\theta^\star$ is that $\theta^\star \in \mathcal{S}(\smooth_n)$ where 
$$\mathcal{S}(\smooth_n) = \set{\theta \in \matR^n : \theta^\top L \theta \leq \smooth_n}.$$ They establish a lower bound on the minimax risk \cite[Theorem 5]{sadhanala16} for the class $\mathcal{S}(\smooth_n)$,
\begin{equation*}
    \inf_{\substack{\est{\theta}}}  \sup_{\substack{\theta^\star \in \mathcal{S}(\smooth_n)}} \frac{1}{n}\expec[\|\est{\theta} - \theta^{\star}\|_2^2] \geq \frac{c}{n} \min\set{(n\sigma^2)^{\frac{2}{d+2}} (\smooth_n)^{\frac{d}{d+2}}, n\sigma^2, n^{2/d} \smooth_n} + \frac{\sigma^2}{n}
\end{equation*}
with $c > 0$ a universal constant. Moreover, they show for $d=1,2,3$ that the solution $\est{\theta}$ of \eqref{eq:tikh_reg} is minimax optimal since it satisfies 
\begin{equation} \label{eq:upbd_sandhanala}
     \sup_{\substack{\theta^\star \in \mathcal{S}(\smooth_n)}} \frac{1}{n}\expec[\|\est{\theta} - \theta^{\star}\|_2^2] \leq \frac{\tilde{c}}{n} \min\set{(n\sigma^2)^{\frac{2}{d+2}} (\smooth_n)^{\frac{d}{d+2}}, n\sigma^2, n^{2/d} \smooth_n} + \frac{\tilde{c}\sigma^2}{n}
\end{equation}
for a universal constant $\tilde{c} > 0$, when $n$ is large enough and $\reg \asymp (\frac{n}{\smooth_n})^{\frac{2}{d+2}}$. It is also mentioned in \cite[Remark 5]{sadhanala16} that for $d = 4$, \eqref{eq:tikh_reg} is nearly minimax optimal due to an extra $\log$ factor in the rate. No rates are provided for $d \geq 5$, but they conjecture that it is not optimal for this range of $d$. It is also shown that the Laplacian eigenmap estimator\footnote{Here, we project $y$ onto the subspace spanned by the $k$ smallest eigenvectors of $L$, for a suitably chosen $k$.} achieves the aforementioned upper bound for all $d$, and hence is minimax optimal. 
\begin{remark} \label{rem:sadhanala_bounds}
The crucial step in establishing \eqref{eq:upbd_sandhanala} is Lemma $10$ in \cite{sadhanala16}; it bounds the variance error by bounding $\sum_{i=1}^n \frac{1}{(1+\reg \lambda_i)^2}$. This latter bound is tight as the analysis steps obviously make explicit use of the  expressions for the eigenvalues of $L$. It is possible that the steps involved in \cite[Lemma 10]{sadhanala16} could be appropriately used to further tighten our bound in Corollary \ref{cor:ucqp_den_func_mod} when $G = P_n$. However the main purpose of our analysis is to work with general connected graphs $G$, and to derive general error bounds which depend on the spectrum of the Laplacian of $G$. It is then not surprising that instantiating these general bounds to particular graphs yields sub-optimal error bounds.
\end{remark}
Kirichenko and van Zanten \cite{kirichenko2017} considered a Bayesian regularization framework for estimating $\theta^\star$. 
They make an asymptotic shape assumption on the graph, namely that $G$ looks like a $r$-dimensional grid with $n$ vertices as $n \rightarrow \infty$. 
More precisely, they assume that $\lambda_{n-i} = \Theta((i/n)^{2/r})$ for $1 \leq i \leq \kappa n$ for some $\kappa \in (0,1)$. The smoothness of $\theta^\star$ 
is captured by the assumption $(\theta^\star)^\top (I + (n^{2/r} L)^{\beta}) \theta^\star \leq C$ where $\beta, C > 0$. Under these assumptions, with an appropriate assumption on the prior distribution for a randomly generated $\theta \in \matR^n$, it is shown \cite[Theorem 3.2]{kirichenko2017} that for $n$ large enough,  
$$\frac{1}{n}\|\theta - \theta^\star\|_2^2 = O(n^{-\frac{2\beta}{2\beta + r}})$$ 
holds with high probability. This is then used to show \cite[Theorem 5.1]{kirichenko2017} that the posterior contracts around $\theta^\star$ at the rate 
$n^{-\frac{\beta}{2\beta + r}}$. This result was later shown to be optimal by Kirichenko and van Zanten \cite{kirichenko2018} under the same set of smoothness and asymptotic shape assumptions on $\theta^\star$ and $G$ respectively.
%
%
%-----------------------------------------------------------------------
% The analysis technique of Cucuringu and Tyagi  \cite{CMT18_long}
%------------------------------------------------------------------------
\subsection{The analysis technique of Cucuringu and Tyagi \cite{CMT18_long}} It is important to understand the general idea behind the analysis technique in \cite{CMT18_long} for \eqref{prog:trs} that leads to the estimation error bound in \eqref{eq:cmt_err_bd}. Denoting $F(g) = \norm{g - z}_2^2 + \reg g^* L g$ to be the objective function, the main observation is that by feasibility of the ground truth $h \in \calC_n$, we have $F(\gest) \leq F(h)$ for any solution $\gest$. Then after rearranging the terms followed by some simplification, one can readily check that the above inequality is equivalent to
\begin{equation} \label{eq:cmt_inter_ineq}
\norm{\gest - h}_2^2 \leq \norm{z - h}_2^2 - \reg \gest^* L \gest + 2\real(\gest^* (z-h)) + \reg h^* L h.    
\end{equation}
Now if $z$ is generated randomly as in \eqref{eq:noise_mod}, we know that $\norm{z-h}_2^2 \lesssim \sigma^2 n$ w.h.p if $\sigma \lesssim 1$. Moreover, the term $\real(\gest^* (z-h))$ is bounded via Cauchy-Schwartz to obtain $\real(\gest^* (z-h)) \leq \sqrt{n} \norm{z-h}_2 \lesssim \sigma n$ w.h.p. Plugging these bounds in \eqref{eq:cmt_inter_ineq} leads to the bound
\begin{equation} \label{eq:cmt_inter_ineq_1}
\norm{\gest - h}_2^2 \lesssim \sigma n - \reg \gest^* L \gest + \reg h^* L h.    
\end{equation}
The final step in the analysis requires lower bounding the quadratic term $\gest^* L \gest$, which first involves utilising the expression of the \eqref{prog:trs} solution $\gest$ to show that \cite[Lemma 5]{CMT18_long} 
\begin{equation} \label{eq:cmt_inter_ineq_2}
    \gest^* L \gest \gtrsim \frac{1}{(1+2\reg \Delta)^2} z^* L z
\end{equation}
and subsequently using concentration inequalities to show that \cite[Proposition 2]{CMT18_long} w.h.p., 
$z^* L z \gtrsim \reg \Delta \sigma^4 + h^* L h$ when $\sigma \lesssim 1$. Plugging these considerations in \eqref{eq:cmt_inter_ineq_1}, and noting that $h^* L h \lesssim \frac{M^2 \Delta^3}{n}$ finally leads to the bound
\begin{equation*} 
    \norm{\gest - h}_2^2 \lesssim \sigma n + \reg \frac{M^2 \Delta^3}{n} - \frac{\reg^2 \Delta}{(1+2\reg \Delta)^2} \sigma^4. 
\end{equation*}
Using Proposition \ref{prop:entry_proj}, we obtain the bound stated in \eqref{eq:cmt_err_bd} since $\frac{\reg^2 \Delta}{(1+2\reg \Delta)^2}$ is always less than $\frac{1}{4\Delta}$. 

We believe that certain steps in the above analysis can likely be improved. For instance the bound on $\real(\gest^* (z-h))$ could be perhaps improved by using the expression for the \eqref{prog:trs} solution $\gest$, along with concentration inequalities. Furthermore, the lower bound in \eqref{eq:cmt_inter_ineq_2} is almost certainly sub-optimal as can be seen from the proof of \cite[Lemma 5]{CMT18_long}. But it seems unlikely that the ensuing improvements will improve the bounds drastically, and would probably at best improve the term $\sigma n$ to $\sigma^2 n$.   
%------------------
% Future work
%
\subsection{Future work} \label{subsec:fut_work}
There are several important directions for future work.
\begin{enumerate}
    \item \textit{Optimality of the error bounds.} The $\ell_2$ error bounds that we derived for the \eqref{prog:trs} and \eqref{prog:ucqp} estimators are certainly not optimal. For instance as noted earlier in Remark \ref{rem:ucqp_lw_bd_subopt}, when $G = P_n$ and $\smooth_n \asymp 1/n$, the $\ell_2$ error bound for \eqref{prog:ucqp} is $O(n^{1/3})$ while the optimal scaling should be $O(n^{1/6})$.  Admittedly, our analysis does involve certain simplifications at different steps in order to make the calculations more tractable - especially in the derivation of the choice of the regularization parameter $\reg$. In particular, since we work with general connected graphs and do not make any assumption on the spectrum of the Laplacian, hence the central theme behind our analysis -- which is to separate the low and high frequency spectrum of the Laplacian -- can be considered a bit crude for certain graphs whose Laplacian exhibits a more continuous spectrum (such as the path graph). In general, if one restricts the analysis to special families of graphs with the spectrum of the Laplacian's possessing a specific structure (e.g., $G = P_n$; recall Remark \ref{rem:sadhanala_bounds}), it is plausible that a more refined analysis could be carried out with better error rates. Otherwise, providing an ``optimal'' analysis in its full generality would involve finding the best choice of $\reg$ which -- as evidenced by the proof of Lemma \ref{lem:ucqp_lem_err_1} -- seems challenging. Nevertheless, this is an interesting question to consider for future work.  
    
    \item \textit{$\ell_{\infty}$ estimation error rates.} While our focus throughout has been on deriving  $\ell_2$ error bounds, an interesting but more challenging task would be to derive $\ell_{\infty}$ error bounds for the \eqref{prog:trs} and \eqref{prog:ucqp} estimators. Such a result would be especially useful for the problem of unwrapping noisy modulo samples of a function $f$ since it would enable us to obtain uniform error rates for the unwrapped samples of $f$, using the aforementioned results of Fanu\"el and Tyagi \cite{fanuel20}.
\end{enumerate}

\section*{Acknowledgements}
I would like to thank St\'{e}phane Chr\'{e}tien and Micha\"el Fanuel for carefully reading a preliminary version of the draft, and for providing useful feedback; Alain Celisse for the very helpful technical discussions during the early stages of this work.

% Bibliography
%\newpage
\bibliographystyle{plain}
\bibliography{references}

% Appendix
\newpage
\appendix

\section{\secrev{Summary of notation}} \label{appsec:summary_notation}

\begin{center}
\begin{table}[!ht]
\begin{tabular}{ | m{4cm}| m{10cm} | } 
\hline
\textit{Symbol} & \textit{Definition} \\
\hline\hline
 $G = ([n], E)$ & Undirected, connected graph with $n$ vertices and edge set $E$  \\ 
\hline
 $\triangle > 0$ & Maximum degree of $G$ \\ 
\hline
$L \in \matR^{n \times n}$ & Laplacian matrix  of $G$ \\ 
\hline
$\mathcal{N}(L)$ & Null space of $L$ which is span$\set{q_n}$ since $G$ is connected \\
\hline
$\lambda_n = 0 < \lambda_{n-1} = \lambda_{\min} \leq \lambda_{n-2} \leq \cdots \leq \lambda_1$ & Eigenvalues of $L$ \\
\hline
$q_i \in \matR^n$; $i=1,\dots,n$ & Corresponding eigenvectors of $L$ \\
\hline
    $\calL_{\lambda} \subset [n-1]$ & $\calL_{\lambda} := \set{j \in [n-1]: \lambda_j < \lambda}$ (defined for $\lambda \in [\lambmin, \lambda_1]$, see \eqref{eq:low_freq_set}) \\
\hline
$\calC_n \subset \mathbb{C}^n$ & $\calC_n := \set{u \in \mathbb{C}^n: \abs{u_i} = 1; \ i=1,\dots,n}$ \\
\hline
$h \in \calC_n$ & Ground-truth signal \\
\hline
$\smooth_n \geq 0$ & Smoothness parameter, $h^* L h \leq \smooth_n$ \\
\hline
$\sigma \geq 0$ & Noise level \\
\hline
$z \in \calC_n$ & Noisy measurements of $h$ (see \eqref{eq:noise_mod}) \\
\hline
$\reg \geq 0$ & Smoothness regularization parameter in \eqref{prog:ucqp} and \eqref{prog:trs} \\
\hline
$G = K_n$ & $E = \set{\set{i,j}: i \neq j \in [n]}$ ($G$ is a complete graph) \\
\hline
$G = P_n$ & $E = \set{\set{i,i+1}: i=1,\dots,n}$ ($G$ is a path graph) \\
\hline
$G = S_n$ & $E = \set{\set{i,i_0}: i \neq i_0 \in [n]}$ for a given $i_0 \in [n]$ ($G$ is a star graph) \\
\hline
$M > 0$ & Lipschitz constant of $f:[0,1] \rightarrow \matR$ in the example in Section \ref{subsec:prob_setup} \\
\hline
\end{tabular}
\caption{Summary of symbols used throughout the paper along with their definitions.}
\end{table}
\end{center}

%
%
%---------------------------------------
% Proof of Proposition for noise model
%---------------------------------------
\section{Proof of Proposition \ref{prop:noise_expec_iden}} \label{appsec:proof_prop_noise_expec}
\begin{enumerate}
\item This follows from the fact $\expec[e^{\iota 2\pi \eta}] = e^{-2\pi\sigma^2}$ for $\eta \sim \calN(0,\sigma^2)$.

\item We have
\begin{equation*}
  \expec\left[\abs{\dotprod{z- e^{-2\pi^2 \sigma^2} h}{u}}^2 \right] = \sum_{i=1}^{n} u_i^2 \expec \left[\abs{z_i - e^{-2\pi^2 \sigma^2} h_i}^2 \right] = \sum_{i=1}^n u_i^2 (1 + e^{-4\pi^2\sigma^2} - 2e^{-2\pi^2\sigma^2} \underbrace{\expec[\real(z_i^* h_i)]}_{= e^{-2\pi^2\sigma^2}}).
\end{equation*}

\item This follows from Part $2$ by noting that 
%\
\begin{equation*}
    \expec\left[\abs{\dotprod{z}{u}}^2 \right] = e^{-4 \pi^2\sigma^2}\abs{\dotprod{h}{u}}^2 + \expec \left[\abs{\dotprod{z - e^{-2\pi^2\sigma^2} h}{u}}^2 \right].
\end{equation*}

\item Use Part $2$ and the fact that for any orthonormal basis $\set{u_j}_{j=1}^n$ of $\matR^n$, $$\norm{z - \expn{2} h}_2^2 = \sum_{j=1}^n \abs{\dotprod{z- e^{-2\pi^2 \sigma^2} h}{u_j}}^2. $$

\item $\expec[\norm{z-h}_2^2] = 2n - 2 \sum_{i=1}^n \real(\expec[z_i^* h_i]) = 2n (1-e^{-2\pi^2 \sigma^2})$ where the last identity uses Part $1$.
\end{enumerate}
The bounds in \eqref{eq:expec_zh_dist_bds} follow from the following standard fact. For $x \geq 0$, we have that $x - \frac{x^2}{2} \leq 1 - e^{-x} \leq x$. Hence, if $x \in [0,1]$, this implies that $\frac{x}{2} \leq 1-e^{-x} \leq x$.

%-----------------------------------------------
% Proof of Proposition for concentration bounds
%-----------------------------------------------
\section{Proof of Proposition \ref{prop:conc_bounds}} \label{appsec:proof_prop_conc}
Before the proof, we recall some concentration inequalities that we will use. The first of these is the standard Bernstein inequality.
\begin{theorem}[{\cite[Corollary 2.11]{conc_book}}] \label{thm:bern_ineq}
Let $X_1,\dots,X_n$ be independent random variables with $\abs{X_i} \leq b$ for all $i$, 
and $v = \sum_{i=1}^n \expec[X_i^2]$. Then for any $t \geq 0$,
\begin{equation*}
	\prob \left(\sum_{i=1}^n (X_i - \expec[X_i]) \geq t \right) \leq \exp\left(-\frac{t^2}{2(v + \frac{bt}{3})}\right).
\end{equation*}
\end{theorem}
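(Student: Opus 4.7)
The plan is to establish this deviation bound via the classical Chernoff--Cram\'{e}r exponential method, reducing the problem to controlling the moment generating function (MGF) of each summand and then optimizing a free parameter. Let $Y_i := X_i - \expec[X_i]$, so that $\expec[Y_i] = 0$ and $\sum_{i=1}^n \expec[Y_i^2] \leq v$. For any $\lambda > 0$, the exponential Markov inequality combined with independence yields
\begin{equation*}
\prob\!\left( \sum_{i=1}^n Y_i \geq t \right) \;\leq\; e^{-\lambda t} \prod_{i=1}^n \expec[e^{\lambda Y_i}],
\end{equation*}
so everything reduces to a sharp MGF bound on each factor together with an optimal choice of $\lambda$.

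The key technical step is to establish the one-variable estimate
\begin{equation*}
\expec[e^{\lambda Y_i}] \;\leq\; \exp\!\left( \frac{\lambda^2 \, \expec[X_i^2]}{2 \bigl(1 - \lambda b / 3 \bigr)} \right), \qquad \lambda \in (0, 3/b).
\end{equation*}
I would obtain this by Taylor-expanding $e^{\lambda Y_i} = 1 + \lambda Y_i + \sum_{k \geq 2} \lambda^k Y_i^k / k!$, dropping the linear term since $\expec[Y_i] = 0$, and bounding higher moments via $\expec[|Y_i|^k] \leq \expec[Y_i^2] \, b^{k-2}$ for $k \geq 2$ (which follows from the boundedness $|X_i| \leq b$ after an appropriate centering argument). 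Summing the resulting geometric-type series via the elementary inequality $\sum_{k \geq 2} x^{k-2}/k! \leq 1/(2(1 - x/3))$, valid on $[0, 3)$, and then invoking $1 + u \leq e^u$, yields the displayed MGF bound.

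Multiplying the MGF estimates across $i$ and using $\sum_i \expec[X_i^2] \leq v$ gives
\begin{equation*}
\prob\!\left( \sum_{i=1}^n Y_i \geq t \right) \;\leq\; \exp\!\left( -\lambda t + \frac{\lambda^2 v}{2 (1 - \lambda b / 3)} \right),
\end{equation*}
valid for all $\lambda \in (0, 3/b)$. Differentiating the exponent in $\lambda$ and solving the first-order condition gives the optimal choice $\lambda^{\star} = t / (v + b t / 3)$, which indeed lies in the admissible range; substituting it back yields the claimed tail bound $\exp \bigl( -t^2 / (2(v + b t / 3)) \bigr)$.

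The main obstacle is obtaining the exact constants -- in particular, the factor $1/3$ in the denominator -- which is delicate and comes from the sharp form of the series inequality $\sum_{k \geq 2} x^{k-2} / k! \leq 1/(2(1 - x/3))$ rather than any looser bound such as $\sum_{k \geq 2} x^{k-2}/k! \leq e^x$. A second subtle point is extracting a moment bound on the centered variable $Y_i$ without losing a factor of $2^{k-2}$ relative to the bound on $X_i$; this is classically handled either by a direct Bennett-type argument that uses only the one-sided bound on $X_i$, or by exploiting the observation that the variance of $X_i$ already absorbs the mean shift, so that the centering incurs no penalty.
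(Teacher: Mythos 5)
The paper does not prove this result: it is cited verbatim as Corollary 2.11 of Boucheron--Lugosi--Massart's \emph{Concentration Inequalities}, so there is no in-paper proof to compare against. Your outline does reconstruct the standard Chernoff--Cram\'er argument that underlies the textbook proof, and the series inequality $\sum_{k\geq 2} x^{k-2}/k! \leq 1/(2(1-x/3))$, the resulting MGF bound, and the choice $\lambda^{\star} = t/(v+bt/3)$ are all correct; substituting $\lambda^{\star}$ does produce $\exp(-t^2/(2(v+bt/3)))$ as claimed.

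There is, however, a genuine gap at the step you flag as delicate, and as written it would not go through. The moment bound $\expec[|Y_i|^k] \leq \expec[Y_i^2]\, b^{k-2}$ is false in general: take $X_i = b$ with probability $p$ and $X_i = -b$ with probability $1-p$, so that $|Y_i|$ takes values $2b(1-p)$ and $2bp$. A short computation gives $\expec[|Y_i|^k]/(\expec[Y_i^2]\,b^{k-2}) = 2^{k-2}\bigl((1-p)^{k-1}+p^{k-1}\bigr)$, which tends to $2^{k-2} > 1$ as $p \to 1$. So Taylor-expanding $e^{\lambda Y_i}$ and bounding the centered moments costs you exactly the $2^{k-2}$ factor you were worried about, and no "centering argument" recovers it at that stage.

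The correct fix --- which is what the cited proof actually does and which you gesture at with "the variance already absorbs the mean shift" --- is to expand the \emph{uncentered} MGF instead. Writing $\mu_i = \expec[X_i]$, one has
\begin{equation*}
\expec[e^{\lambda X_i}] \;=\; 1 + \lambda \mu_i + \sum_{k\geq 2}\frac{\lambda^k\,\expec[X_i^k]}{k!}
\;\leq\; 1 + \lambda \mu_i + \frac{\lambda^2\,\expec[X_i^2]}{2(1-\lambda b/3)},
\end{equation*}
using $\expec[X_i^k] \leq \expec[|X_i|^k] \leq b^{k-2}\expec[X_i^2]$, which \emph{is} valid because $|X_i| \leq b$ (no factor of $2$). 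Multiplying by $e^{-\lambda\mu_i}$ and applying $1+u\leq e^u$ makes the linear term cancel exactly:
\begin{equation*}
\expec[e^{\lambda Y_i}] = e^{-\lambda\mu_i}\,\expec[e^{\lambda X_i}]
\;\leq\; e^{-\lambda\mu_i}\exp\!\left(\lambda\mu_i + \frac{\lambda^2\,\expec[X_i^2]}{2(1-\lambda b/3)}\right)
= \exp\!\left(\frac{\lambda^2\,\expec[X_i^2]}{2(1-\lambda b/3)}\right).
\end{equation*}
With this replacement the rest of your argument closes. Note also that the standard statement only needs $X_i \leq b$ (one-sided) since for the upper tail one controls $\expec[(X_i)_+^k]$; the paper's two-sided hypothesis $|X_i|\leq b$ is strictly stronger, so the conclusion holds a fortiori.
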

Note that replacing $X_i$ with $-X_i$ gives us the lower tail estimate. Next, we will use a recent, sharper version of the Hanson-Wright inequality due to Bellec \cite{bellec2019}, for concentration of random quadratic forms. We state (for our purposes) a shorter version of this theorem.
\begin{theorem}[{\cite[Theorem 3]{bellec2019}}] \label{thm:bellec}
Let $\vecxi := (\xi_1,\dots,\xi_n)^T$ be centered, independent (real-valued) random variables, with $\nu_i^2 = \expec[\xi_i^2]$, satisfying for some $K > 0$ the Bernstein condition
\begin{equation*}
    \forall p \geq 1: \quad \expec \abs{\xi_i}^{2p} \leq \frac{p!}{2} \nu_i^2 K^{2(p-1)}.
\end{equation*}
For any real  matrix $A \in \matR^{n \times n}$, and any $x > 0$, we have with probability at least $1 - e^{-x}$ that 
\begin{equation*}
    \vecxi^T A \vecxi - \expec[\vecxi^T A \vecxi] \leq 256 K^2 \norm{A}_2 x + 8\sqrt{3} K \norm{A D_{\nu}}_F \sqrt{x}
\end{equation*}
where $D_{\nu}:= \diag(\nu_1,\dots,\nu_n)$.
\end{theorem}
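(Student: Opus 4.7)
The plan is to follow a sharp Hanson--Wright-style argument that decomposes the centered quadratic form into a diagonal piece and an off-diagonal bilinear piece, controls each by Bernstein/decoupling machinery, and tracks the numerical constants carefully. As a preprocessing step I would symmetrize $A$, since $\vecxi^\top A \vecxi = \vecxi^\top \tfrac{A+A^\top}{2} \vecxi$ and $\|\tfrac{A+A^\top}{2}\|_2 \leq \|A\|_2$, $\|\tfrac{A+A^\top}{2} D_\nu\|_F \leq \|A D_\nu\|_F$. Writing $\vecxi^\top A \vecxi - \mathbb{E}[\vecxi^\top A \vecxi] = S_{\mathrm{diag}} + S_{\mathrm{off}}$ with $S_{\mathrm{diag}} := \sum_i A_{ii}(\xi_i^2 - \nu_i^2)$ and $S_{\mathrm{off}} := 2\sum_{i<j} A_{ij}\xi_i \xi_j$, the task reduces to deriving matching tail bounds for each piece.

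For $S_{\mathrm{diag}}$, the Bernstein moment hypothesis on $\xi_i$ lifts to a Bernstein bound for $\xi_i^2 - \nu_i^2$ of the form $\mathbb{E}|\xi_i^2 - \nu_i^2|^p \leq \tfrac{p!}{2} (c\nu_i^2)(cK^2)^{p-2}$ for a universal $c$. Applying Theorem \ref{thm:bern_ineq} with weights $A_{ii}$ gives a deviation bound whose sub-Gaussian part scales like $K(\sum_i A_{ii}^2 \nu_i^2)^{1/2}\sqrt{x}$ and whose linear-in-$x$ part scales like $K^2 \max_i |A_{ii}|\, x$. Both of these are dominated by the target, since $\sum_i A_{ii}^2 \nu_i^2 \leq \|A D_\nu\|_F^2$ and $\max_i |A_{ii}| \leq \|A\|_2$. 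For $S_{\mathrm{off}}$ I would invoke the standard de la Peña--Montgomery-Smith decoupling, which bounds the tail of $S_{\mathrm{off}}$ (up to a universal factor) by the tail of $T := \sum_{i,j} A_{ij} \xi_i \xi_j'$, where $\vecxi'$ is an independent copy. Conditioning on $\vecxi'$ turns $T$ into a weighted sum of independent centered Bernstein variables with weights $w_i(\vecxi') = \sum_j A_{ij}\xi_j'$; a conditional Bernstein bound gives
\[
T \lesssim K\sqrt{x}\Bigl(\sum_i \nu_i^2 w_i(\vecxi')^2\Bigr)^{1/2} + K^2 x \max_i |w_i(\vecxi')|.
\]
Observing that $\sum_i \nu_i^2 w_i(\vecxi')^2 = \vecxi'^\top A^\top D_\nu^2 A \vecxi'$, a second Bernstein/Hanson--Wright pass in $\vecxi'$ (or simply $\mathbb{E}$ and Markov) controls this by $\|A D_\nu\|_F^2$ plus a $K^2 \|A\|_2 x$ correction, and $\max_i |w_i(\vecxi')|$ is handled by Cauchy--Schwarz and the operator-norm bound $\|A\|_{2 \to \infty} \leq \|A\|_2$.

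Combining the diagonal and decoupled off-diagonal estimates via a union bound and re-parameterizing the tail level produces the two-term upper bound of the stated shape $c_1 K^2 \|A\|_2 x + c_2 K \|A D_\nu\|_F \sqrt{x}$. The main obstacle is not the functional form but the \emph{sharp numerical constants} $256$ and $8\sqrt{3}$: a naive split-and-union-bound argument inflates these by factors coming from the decoupling step and the conditional-then-marginal Bernstein calls. Bellec's improvement proceeds by directly estimating $\log \mathbb{E}[\exp(\lambda(\vecxi^\top A \vecxi - \mathbb{E}\vecxi^\top A \vecxi))]$ using a Herbst/entropy-method argument that handles the two regimes simultaneously rather than via union bound; tracking constants through this MGF calculation is the delicate part I would expect to consume most of the effort, but since the theorem is invoked here as a black box (following \cite{bellec2019}), for our purposes the functional dependence $\|A\|_2 x + \|A D_\nu\|_F \sqrt{x}$ is what is essential.
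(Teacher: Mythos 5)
This theorem is not proved in the paper at all: it is imported verbatim (with citation) from Bellec, \emph{Concentration of quadratic forms under a Bernstein moment assumption}, and is used downstream as a black box in the proof of Proposition~\ref{prop:conc_bounds}. So there is no in-paper proof to compare against, and the relevant question is whether your sketch actually establishes the stated inequality. It does not, and you say as much yourself: the decoupling-plus-union-bound route you outline yields the correct functional shape $c_1 K^2\|A\|_2 x + c_2 K\|AD_\nu\|_F\sqrt{x}$ but not the specific constants $256$ and $8\sqrt 3$, whereas the theorem as stated asserts those constants. A proof of a theorem must prove the theorem, constants included; a sketch that delivers a weaker statement with ``for some universal $c_1,c_2$'' is a proof of a different (weaker) result, and you correctly flag that Bellec's actual argument works directly with the log-moment-generating function rather than splitting and union-bounding.

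Two smaller issues are worth noting. First, your symmetrization step asserts $\|\tfrac{A+A^\top}{2}D_\nu\|_F \le \|AD_\nu\|_F$. This is false in general: by the triangle inequality one only gets $\tfrac12(\|AD_\nu\|_F + \|A^\top D_\nu\|_F)$, and $\|A^\top D_\nu\|_F^2 = \sum_{i,j} A_{ij}^2\nu_i^2$ need not equal $\|AD_\nu\|_F^2 = \sum_{i,j}A_{ij}^2\nu_j^2$ when the variances are heterogeneous. (In this paper's applications $A = \pm UU^\top$ is symmetric, so the issue is harmless there, but the lemma as stated allows general $A$.) Second, the step ``$\sum_i \nu_i^2 w_i(\vecxi')^2 = \vecxi'^\top A^\top D_\nu^2 A\,\vecxi'$, controlled by $\|AD_\nu\|_F^2$ plus a $K^2\|A\|_2 x$ correction'' uses a second concentration pass that introduces another union bound and another constant loss, compounding the issue above. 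None of this affects the paper's correctness, since the authors simply cite Bellec; but if you intended your outline as a self-contained derivation of Theorem~\ref{thm:bellec}, it would need to be replaced by (or carefully reconciled with) the entropy/Herbst-style MGF bound you mention at the end.
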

The Bernstein condition is satisfied, for example, by centered  bounded random variables almost surely bounded by $K$, which will be the case in our setting. Note that replacing $A$ with $-A$ in Theorem \ref{thm:bellec} gives us the lower tail estimate.
%--------------------------
% Proof of Proposition
%--------------------------
\begin{proof}[Proof of Proposition \ref{prop:conc_bounds}]
We will denote $\zbar = \expec[z] \ (= \expn{2} h)$ and also denote $v = v_R + \iota v_I$ for any $v \in \mathbb{C}^n$. 
\paragraph{(1) Proof of \ref{prop:conc_bds_1}.}
Now note that 
\begin{align} \label{eq:part1_tmp1}
z^* UU^T z = (z-\zbar)^* UU^T (z - \zbar) + 2\real((z-\zbar)^* UU^T \zbar) + \expn{4} h^* UU^T h    
\end{align}
and so we will focus on lower bounding the first two terms on the RHS. In particular, we will bound the first term using Theorem \ref{thm:bellec} (with $A = -UU^T$) and the second term using Theorem \ref{thm:bern_ineq}.

\noindent \textit{(1a) Bounding the first term in \eqref{eq:part1_tmp1}.}
Since 
\begin{equation} \label{eq:quad_split_1}
  (z-\zbar)^* UU^T (z - \zbar) = (z_R - \zbar_R)^* UU^T (z_R - \zbar_R) +  (z_I - \zbar_I)^* UU^T (z_I - \zbar_I) 
\end{equation}
therefore we will bound each of the two terms in \eqref{eq:quad_split_1} using Theorem \ref{thm:bellec} with $A = -UU^T$. In fact, we will do this only for the first term since the bound on the other term follows in an analogous manner. To this end, note that 
$\expec[z_{R,i}] = \expn{2} h_{R,i}$ and also
\begin{equation*}
\expec[z_{R,i}^2] = h_{R,i}^2 \left(\frac{1 + \expn{8}}{2}\right)    + h_{I,i}^2 \left(\frac{1 - \expn{8}}{2}\right).
\end{equation*}
Then a simple calculation reveals the bound
\begin{equation} \label{eq:nuRmax_bd}
 \nu_{R,i}^2 :=  \expec[z_{R,i}^2] - (\expec[z_{R,i}])^2 = \frac{h_{R,i}^2}{2} (1 - \expn{4})^2 + \frac{h_{I,i}^2}{2} (1-\expn{8}) \leq 1-\expn{8}.
\end{equation}
Denoting $D_{\nu,R} := \diag(\nu_{R,1},\dots,\nu_{R,n})$, observe that 
\begin{equation*}
\norm{UU^T D_{\nu,R}}_F = \norm{Q^T D_{\nu,R}}_F \leq \sqrt{k} \norm{D_{\nu,R}}_2 \leq \sqrt{k} (1-\expn{8})
\end{equation*}
where the last inequality uses \eqref{eq:nuRmax_bd}. Since $\abs{z_{R,i} - \zbar_{R,i}} \leq 2$ for each $i$, we obtain from Theorem \ref{thm:bellec} that with probability at least $1-e^{-x}$, 
\begin{equation*}
 (z_R - \zbar_R)^* UU^T (z_R - \zbar_R) \geq \expec[(z_R - \zbar_R)^* UU^T (z_R - \zbar_R)] - (1024 x + 16\sqrt{3k}  (1-\expn{8}) \sqrt{x}).  
\end{equation*}
The same analysis holds for the other term in \eqref{eq:quad_split_1}. Hence plugging these estimates in \eqref{eq:quad_split_1}, using Proposition \ref{prop:noise_expec_iden}, and setting $x = 2\log n$, we obtain with probability at least $1- \frac{2}{n^2}$ that 
\begin{equation} \label{eq:bd_first_term}
   (z-\zbar)^* UU^T (z - \zbar) \geq k(1 - \expn{4}) - 4096 \log n - 32\sqrt{6k}  (1 - \expn{8}) \sqrt{\log n}.    
\end{equation}

\noindent \textit{(1b) Bounding the second term in \eqref{eq:part1_tmp1}.}
We can write
\begin{equation} \label{eq:quad_split_2}
    \real((z-\zbar)^* UU^T \zbar) = (z_{R} - \zbar_R)^T  Q Q^T \zbar_R + (z_{I} - \zbar_I)^T  Q Q^T \zbar_I 
\end{equation}
so we will bound each of the two terms in \eqref{eq:quad_split_2} by Bernstein inequality. In particular we will only do this for the first term since the other term can be bounded analogously. To this end, denoting $u_R = QQ^T \zbar_R$, we have 
\begin{align*}
    (z_{R} - \zbar_R)^T  Q Q^T \zbar_R = \sum_{i=1}^n \underbrace{(z_{R,i} - \zbar_{R,i}) u_{R,i}}_{X_{R,i}}
\end{align*}
which is the sum of zero mean independent random variables. We can bound $\abs{X_{R,i}}$ uniformly as
\begin{align*}
    \abs{X_{R,i}} \leq 2 \norm{u_R}_{\infty} \leq \underbrace{2\expn{2} \norm{QQ^T h_R}_{\infty}}_{b_{R,\max}}; \quad i=1,\dots,n,
\end{align*}
and the variance term
%\
\begin{equation*}
 \sum_{i=1}^n \expec[X_{R,i}^2] \leq (1-\expn{8}) \norm{u_R}_2^2 = (1-\expn{8}) \expn{4} \norm{QQ^T h_R}_2^2 = v_{R,\max}.
\end{equation*}
Now using Theorem \ref{thm:bern_ineq} with $v = v_{\max,R}$, $b = b_{\max,R}$, and $t = -\frac{2}{3} \log n(b_{\max,R} + \sqrt{b_{\max,R}^2 + 9 v_{\max,R}})$, we obtain
\begin{equation} \label{eq:real_part_bern_1}
 \prob(-(z_{R} - \zbar_R)^T  Q Q^T \zbar_R \geq \frac{2}{3} \log n(b_{\max,R} + \sqrt{b_{\max,R}^2 + 9 v_{\max,R}})) \leq \frac{1}{n^2}.
\end{equation}
Similarly, one can show that 
\begin{equation} \label{eq:imag_part_bern_1}
 \prob \left(-(z_{I} - \zbar_I)^T  Q Q^T \zbar_I \geq \frac{2}{3} \log n(b_{\max,I} + \sqrt{b_{\max,I}^2 + 9 v_{\max,I}}) \right) \leq \frac{1}{n^2}.
\end{equation}
with $b_{\max,I} = 2\expn{2} \norm{QQ^T h_I}_{\infty}$, $v_{\max,I} = (1-\expn{8}) \expn{4} \norm{QQ^T h_I}_2^2$.
Therefore combining \eqref{eq:real_part_bern_1}, \eqref{eq:imag_part_bern_1} in \eqref{eq:quad_split_2}, we have w.p at least $1 - \frac{2}{n^2}$ that 
\begin{align}
    &\real((z-\zbar)^* UU^T \zbar) \nonumber \\
    &\geq -\frac{2}{3} \log n \left(b_{\max,R} + b_{\max,I} + \sqrt{b_{\max,R}^2 + 9 v_{\max,R}} + \sqrt{b_{\max,I}^2 + 9 v_{\max,I}} \right) \nonumber \\
    &\geq -\frac{2}{3} \log n \left(8 \expn{2} \norm{QQ^T h}_{\infty} + 3(\sqrt{v_{\max,R}} + \sqrt{v_{\max,I}}) \right) \nonumber \\
    &\geq -\frac{2}{3} \log n \left(8 \expn{2} \norm{QQ^T h}_{\infty} + 6 \sqrt{(1-\expn{8}) \expn{4}} \norm{Q Q^T h}_2 \right) \nonumber \\
    &\geq -\frac{16}{3} \expn{2} \log n  \left(\norm{QQ^T h}_{\infty} +  \sqrt{(1-\expn{8})} \norm{Q Q^T h}_2 \right). \label{eq:sec_term_bd}
\end{align}
Hence plugging \eqref{eq:bd_first_term} and \eqref{eq:sec_term_bd} in \eqref{eq:part1_tmp1} and applying the union bound, we obtain the statement of part \ref{prop:conc_bds_1} of the proposition after a slight simplification involving the constants.

\paragraph{(2) Proof of \ref{prop:conc_bds_2}.} This follows in an identical manner as \eqref{eq:bd_first_term} by using Theorem \ref{thm:bellec} with $A = UU^T$.

\paragraph{(3) Proof of \ref{prop:conc_bds_3}.} Observe that 
\begin{align} \label{eq:z_h_norm_bd}
\norm{z-h}_2^2 = 2n - 2\real(z^* h) = 2n (1 - \expn{2}) - 2 ((z_R - \zbar_R)^T h_R + (z_I - \zbar_I)^T h_I).    
\end{align}
We will bound $(z_R - \zbar_R)^T h_R$ from above via Theorem \ref{thm:bern_ineq}; the same bound will hold for the term $(z_I - \zbar_I)^T h_I$ which then yields the stated bound in the proposition. 
To this end, note that 
\begin{align*}
    (z_{R} - \zbar_R)^T  h_R = \sum_{i=1}^n \underbrace{(z_{R,i} - \zbar_{R,i}) h_{R,i}}_{X_{R,i}}
\end{align*}
which is the sum of zero mean independent random variables. We can bound $\abs{X_{R,i}}$ uniformly as $\abs{X_{R,i}} \leq 2$ for each $i$, and the variance term
%\
\begin{equation*}
 \sum_{i=1}^n \expec[X_{R,i}^2] \leq (1-\expn{8}) \norm{h_R}_2^2 \leq (1-\expn{8}) n =: v_{\max}.
\end{equation*}
Then applying Theorem \ref{thm:bern_ineq} with $v = v_{\max}$ and $b = 2$ and $t = \frac{2}{3} \log n (2 + \sqrt{4 + 9 v_{\max}})$ yields
\begin{equation} \label{eq:zreal_dprod_bd}
\prob\left((z_{R} - \zbar_R)^T  h_R \leq -\frac{2}{3} \log n \left(2 + \sqrt{4 + 9n (1-\expn{8} ) } \right)\right) \leq \frac{1}{n^2}.
\end{equation}
The same bound holds for the term $(z_I - \zbar_I)^T h_I$ as well, hence plugging these bounds in \eqref{eq:z_h_norm_bd}, together with the union bound on the success probability, yields the statement of part \ref{prop:conc_bds_3}.

\paragraph{(4) Proof of \ref{prop:conc_bds_4}.} The proof is along the lines of that for part \ref{prop:conc_bds_3}. Observe that 
\begin{align}
    \norm{z - \expn{2} h}_2^2 &= n + \expn{4} n - 2\real(z^* h) \expn{2} \nonumber \\
    &= n + \expn{4} n - 2\real((z-\zbar)^* h) \expn{2} - 2\real(\zbar^* h) \expn{2} \nonumber \\
    &= n (1 - \expn{4}) - 2\expn{2} ((z_R - \zbar_R)^T h_R + (z_I - \zbar_I)^T h_I). \label{eq:z_cen_h_norm_bd}
\end{align}
Then bounding the terms $(z_{R} - \zbar_R)^T, (z_{I} - \zbar_I)^T$ as in \eqref{eq:zreal_dprod_bd}, and plugging these bounds in \eqref{eq:z_cen_h_norm_bd}, we obtain the statement of part \ref{prop:conc_bds_4} after the simplification $\expn{2} \leq 1$.
\end{proof}

%--------------------------------------------
% Proof of Corollary for TRS with G = P_n
%--------------------------------------------
%
%
\section{Proof of Corollary \ref{cor:Pn_res_bet_conds}} \label{appsec:proof_Pn_res_bet_conds}
Recall from Corollary \ref{cor:ucqp_den_expec} that $\lambda_j = 4 \sin^2[\frac{\pi}{2n} (n-j)]$ for $j=1,\dots,n$. Hence for $k=1,\dots,n-1$, 
\begin{equation*}
    \lambda_{n-k} = 4 \sin^2 \left(\frac{\pi}{2n} k \right) \asymp \frac{k^2}{n^2} \quad \text{ and } \quad \lambda_{n-k+1} = 4 \sin^2 \left(\frac{\pi}{2n} (k-1) \right) \asymp \frac{(k-1)^2}{n^2}
\end{equation*}
where we see that $\lambda_{n-k+1} < \lambda_{n-k}$ for each $k$. 
In particular, $\lambda_1 \asymp 1$ and  $\lambmin \asymp \frac{1}{n^2}.$ Also recall that $\lambbar \asymp \frac{1}{n^{2(1-\theta)}}$ for $\theta \in [0,1)$ which implies $\abs{\calL_{\lambbar}} \lesssim n^{\theta}$.

Plugging the above bounds in the error bound in Theorem \ref{thm:trs_err_bd_prob}, we obtain
    \begin{equation} \label{eq:app_trs_Pn_temp1}
       \norm{\frac{\gest}{\abs{\gest}} - h}_2^2 \lesssim \sigma n^{2(1-\theta)} \left(M + \frac{(k-1)^4}{n^2 M} \right) + \sigma^2 (n^{\theta} + \sqrt{n^{\theta} \log n}) + \sigma^4 n + \log n + \frac{M^4 n}{k^4}.
    \end{equation}
Setting $k = \lfloor n^{1/2} \rfloor$ in  \eqref{eq:app_trs_Pn_temp1} simplifies the bound to
    \begin{equation} \label{eq:app_trs_Pn_temp2}
       \norm{\frac{\gest}{\abs{\gest}} - h}_2^2 \lesssim \sigma n^{2(1-\theta)} \left(M + \frac{1}{M} \right) + \sigma^2 (n^{\theta} + \sqrt{n^{\theta} \log n}) + \sigma^4 n + \log n + \frac{M^4}{n}.
    \end{equation}
note that the choice $\theta = 2/3$ ``balances'' the exponents of $n$ in the first two terms in \eqref{eq:app_trs_Pn_temp2}. For this choice of $\theta$, we can see that \eqref{eq:app_trs_Pn_temp2} simplifies to the stated error bound in the first part of the Corollary when $n \gtrsim 1$.

For $k = \lfloor n^{1/2} \rfloor$ and $\theta = 2/3$, we have $\lambda_{n-k}$, $\lambda_{n-k+1}  \asymp \frac{1}{n}$, $\lambbar = \frac{1}{n^{2/3}}$ and $\abs{\calL_{\lambbar}} \lesssim n^{2/3}$. Then, 
\begin{equation*}
 \min\set{n \lambda_{n-k}, n \lambbar} \asymp \min \set{1, n^{1/3}} = 1   
\end{equation*}
and since $\smooth_n \asymp M^2/n$, therefore $\smooth_n \lesssim \min\set{n \lambda_{n-k}, n \lambbar}$ is ensured if $n \gtrsim M^2$. The condition on $\sigma$ in the first part follows readily by applying the above bounds to the conditions on $\sigma$ in Theorem \ref{thm:trs_err_bd_prob}. This completes the proof of the first part of Corollary \ref{cor:Pn_res_bet_conds}.  

The statement of the second part follows in a straightforward manner upon applying the above considerations to Theorem \ref{thm:trs_prob_denoise}.
We only remark that if $n \gtrsim 1$, then the condition $1 + \abs{\calL_{\lambbar}} + \sqrt{(1 + \abs{\calL_{\lambbar}}) \log n} \lesssim \varepsilon n$ is ensured provided $n^{2/3} \lesssim \varepsilon n$ or equivalently $n \gtrsim (1/\varepsilon)^3$ (this subsumes the requirement $n \gtrsim 1$).

\end{document}